\newtheorem{thm}{Theorem}[section]
\newtheorem{cor}[thm]{Corollary}
\newtheorem{prop}[thm]{Proposition}
\newtheorem{lem}[thm]{Lemma}
\newtheorem{prob}[thm]{Problem}
\theoremstyle{definition}
\newtheorem{defn}[thm]{Definition}
\theoremstyle{remark}
\numberwithin{equation}{section}
\setlist[enumerate]{itemsep=2ex, topsep=2ex} 
\setlist[itemize]{itemsep=2ex, topsep=2ex}
\newcommand{\al}{\alpha}
\newcommand{\be}{\beta}
\renewcommand{\l}{\left}
\renewcommand{\r}{\right}
\newcommand{\Om}{\Omega}
\renewcommand{\Pr}{\mathbb{P}}
\newcommand{\half}{\frac{1}{2}}
\newcommand{\quart}{\frac{1}{4}}
\newcommand{\tr}[1]{\textrm{#1}}
\newcommand{\om}{\omega}
\newcommand{\rec}[1]{\frac{1}{#1}}
\newcommand{\f}[2]{\frac{#1}{#2}}
\newcommand{\floor}[1]{\lfloor #1\rfloor}
\newcommand{\gam}{\gamma}
\renewcommand{\b}[1]{\mathbf{#1}}
\newcommand{\E}{\mathbb{E}}
\newcommand{\simac}{\sim_{\mathrm{ac}}}
\newcommand{\Var}{\mathrm{Var}}
\newcommand{\red}{\color{red}}
\newcommand{\blue}{\color{blue}}
\begin{document}

\title{Random graphs induced by Catalan pairs}

\author{Dani\"el Kroes and Sam Spiro}
\address
{Department of Mathematics \newline \indent
University of California, San Diego \newline \indent
La Jolla, CA, 92093-0112, USA}
\email{(dkroes,sspiro)@ucsd.edu}

\begin{abstract}
We consider Catalan-pair graphs, a family of graphs that can be viewed as representing certain interactions between pairs of objects which are enumerated by the Catalan numbers. In this paper we study random Catalan-pair graphs and deduce various properties of these random graphs. In particular, we asymptotically determine the expected number of edges and isolated vertices, and more generally we determine the expected number of (induced) subgraphs isomorphic to a given connected graph.
\end{abstract}

\maketitle

\section{Introduction} \label{sec-introduction}
A large body of work has been devoted to studying the Catalan numbers, as well as the many objects that these numbers enumerate.  Such objects include polygon triangulations, binary trees, plane trees, and Dyck paths. For a thorough treatment of Catalan numbers and their history, we refer the reader to \cite{PakCat} and \cite{StanleyCat}.  In this paper we are interested in examining pairs of objects enumerated by the Catalan numbers, as opposed to looking at just a single such object.  In particular, we will be interested in studying how the objects in these pairs interact with one another, and we will represent this interaction as a graph.

To this end, recall that the Catalan numbers count the number of ways one can place $n$ non-intersecting semi-circular arcs on $2n$ given collinear points. We will refer to such a placement of arcs as a \emph{Catalan-arc matching (of size $n$)}. For example, below one can see all $5$ Catalan-arc matchings of size $3$.
\[
\begin{tikzpicture}[scale=0.3]
\node at (0,0) {$\bullet$};
\node at (1,0) {$\bullet$};
\node at (2,0) {$\bullet$};
\node at (3,0) {$\bullet$};
\node at (4,0) {$\bullet$};
\node at (5,0) {$\bullet$};
\draw (5,0) arc [radius=2.5, start angle=0, end angle=180];
\draw (4,0) arc [radius=1.5, start angle=0, end angle=180];
\draw (3,0) arc [radius=0.5, start angle=0, end angle=180];

\node at (8,0) {$\bullet$};
\node at (9,0) {$\bullet$};
\node at (10,0) {$\bullet$};
\node at (11,0) {$\bullet$};
\node at (12,0) {$\bullet$};
\node at (13,0) {$\bullet$};
\draw (13,0) arc [radius=2.5, start angle=0, end angle=180];
\draw (12,0) arc [radius=0.5, start angle=0, end angle=180];
\draw (10,0) arc [radius=0.5, start angle=0, end angle=180];

\node at (16,0) {$\bullet$};
\node at (17,0) {$\bullet$};
\node at (18,0) {$\bullet$};
\node at (19,0) {$\bullet$};
\node at (20,0) {$\bullet$};
\node at (21,0) {$\bullet$};
\draw (19,0) arc [radius=1.5, start angle=0, end angle=180];
\draw (18,0) arc [radius=0.5, start angle=0, end angle=180];
\draw (21,0) arc [radius=0.5, start angle=0, end angle=180];

\node at (24,0) {$\bullet$};
\node at (25,0) {$\bullet$};
\node at (26,0) {$\bullet$};
\node at (27,0) {$\bullet$};
\node at (28,0) {$\bullet$};
\node at (29,0) {$\bullet$};
\draw (25,0) arc [radius=0.5, start angle=0, end angle=180];
\draw (29,0) arc [radius=1.5, start angle=0, end angle=180];
\draw (28,0) arc [radius=0.5, start angle=0, end angle=180];

\node at (32,0) {$\bullet$};
\node at (33,0) {$\bullet$};
\node at (34,0) {$\bullet$};
\node at (35,0) {$\bullet$};
\node at (36,0) {$\bullet$};
\node at (37,0) {$\bullet$};
\draw (33,0) arc [radius=0.5, start angle=0, end angle=180];
\draw (35,0) arc [radius=0.5, start angle=0, end angle=180];
\draw (37,0) arc [radius=0.5, start angle=0, end angle=180];
\end{tikzpicture}
\]

\begin{defn} \label{def-Catalan-pair}
Let $n$ be a positive integer. A \emph{Catalan-pair graph} on $n$ vertices is a graph $G$ that can be obtained by the following procedure. Start with $2n$ collinear points, of which we color $2k$ points red for some $0 \leq k \leq n$ and color the remaining points blue. Then, choose Catalan-arc matchings of sizes $k$ and $n-k$ and place them on the red and blue points, respectively, with the latter being faced downwards rather than upwards. Finally, construct a graph $G$ with one vertex for each of the $n$ arcs, where two vertices are adjacent if and only if the endpoints of the corresponding arcs alternate.
\end{defn}

As an example, we have the following Catalan-pair graph on $9$ vertices, where we colored the arcs according to the color of the points they connect. We say that the pair of Catalan-arc matchings on the left is a \emph{representative} for the graph on the right, or alternatively that it \emph{represents} the graph on the right.
\[
\begin{tikzpicture}[scale=0.5]
\draw (0,0) -- (17,0);
\node at (0,0) {\red $\bullet$};
\node at (1,0) {\red $\bullet$};
\node at (2,0) {\blue $\bullet$};
\node at (3,0) {\red $\bullet$};
\node at (4,0) {\blue $\bullet$};
\node at (5,0) {\blue $\bullet$};
\node at (6,0) {\red $\bullet$};
\node at (7,0) {\red $\bullet$};
\node at (8,0) {\red $\bullet$};
\node at (9,0) {\red $\bullet$};
\node at (10,0) {\red $\bullet$};
\node at (11,0) {\blue $\bullet$};
\node at (12,0) {\blue $\bullet$};
\node at (13,0) {\red $\bullet$};
\node at (14,0) {\blue $\bullet$};
\node at (15,0) {\red $\bullet$};
\node at (16,0) {\blue $\bullet$};
\node at (17,0) {\blue $\bullet$};
\draw[color=red] (3,0) arc [radius=1, start angle=0, end angle=180];
\node at (2,1.5) {$u_2$};
\draw[color=red] (6,0) arc [radius=3, start angle=0, end angle=180];
\node at (3,3.5) {$u_1$};
\draw[color=red] (8,0) arc [radius=0.5, start angle=0, end angle=180];
\node at (7.5,1) {$u_3$};
\draw[color=red] (13,0) arc [radius=1.5, start angle=0, end angle=180];
\node at (11.5,2) {$u_5$};
\draw[color=red] (15,0) arc [radius=3, start angle=0, end angle=180];
\node at (12,3.5) {$u_4$};
\draw[color=blue] (2,0) arc [radius=1, start angle=180, end angle=360];
\node at (3,-1.5) {$v_1$};
\draw[color=blue] (5,0) arc [radius=3, start angle=180, end angle=360];
\node at (8,-3.5) {$v_2$};
\draw[color=blue] (12,0) arc [radius=2.5, start angle=180, end angle=360];
\node at (14.5,-3) {$v_3$};
\draw[color=blue] (14,0) arc [radius=1, start angle=180, end angle=360];
\node at (15,-1.5) {$v_4$};

\node at (19,1) {$\bullet$};
\node at (19,1.5) {$u_1$};
\node at (21,1) {$\bullet$};
\node at (21,1.5) {$u_2$};
\node at (23,1) {$\bullet$};
\node at (23,1.5) {$u_3$};
\node at (25,1) {$\bullet$};
\node at (25,1.5) {$u_4$};
\node at (27,1) {$\bullet$};
\node at (27,1.5) {$u_5$};
\node at (20,-1) {$\bullet$};
\node at (20,-1.5) {$v_1$};
\node at (22,-1) {$\bullet$};
\node at (22,-1.5) {$v_2$};
\node at (24,-1) {$\bullet$};
\node at (24,-1.5) {$v_3$};
\node at (26,-1) {$\bullet$};
\node at (26,-1.5) {$v_4$};
\draw (19,1) -- (22,-1);
\draw (21,1) -- (20,-1);
\draw (25,1) -- (22,-1);
\draw (25,1) -- (24,-1);
\draw (25,1) -- (26,-1);
\draw (27,1) -- (22,-1);
\draw (27,1) -- (24,-1);
\end{tikzpicture}
\]

As a first observation, note that all of the arcs on the top are chosen to be non-intersecting, and similarly for all of the arcs on the bottom. Therefore, if the endpoints of two arcs alternate (and hence correspond to an edge in $G$) these arcs necessarily come from different sides. Thus every Catalan-pair graph is bipartite.

Catalan-pair graphs were recently introduced in \cite{Catalan-pair} where they were called paperclip graphs.  This paper primarily studied partial characterizations of Catalan-pair graphs, as well as bounds on the number of Catalan-pair graphs on a given number of vertices.  We note that Catalan-pair graphs can also equivalently be defined as bipartite circle graphs.  A circle graph is any graph whose vertices can be associated to a set of chords of a circle with two vertices being made adjacent if and only if their corresponding chords intersect. The equivalence between Catalan-pair graphs and bipartite circle graphs follows, similarly to the equivalence between Catalan-arc objects and sets of non-intersecting chords on a circle, by wrapping around the line containing the points and connecting it.

Circle graphs have been extensively studied, mainly from an algorithmic viewpoint.  For example, Spinrad \cite{Spinrad} produced an $O(n^2)$-time algorithm for identifying whether a given graph is a circle graph.  Many problems that are know to be NP-complete for general graphs turn out to have polynomial time algorithms when restricted to circle graphs.  Recently Tiskin showed that a maximum clique of a circle graph can be found in $O(n (\log n)^2)$ time \cite{Tiskin}, and Gregg and Nash have shown that a maximum independent set can be found in time $O(\al n)$, where $\al$ denotes the independence number of the circle graph \cite{Nash}.

The main purpose of this paper is to introduce a model to randomly generate a Catalan-pair graph on $n$ vertices, which we denote by $CP_n$, and to establish various properties about this random graph. Before we precisely define our random graph model, we briefly summarize our main results.

\begin{thm}\label{T-edges}
The expected number of edges of the random Catalan-pair graph $CP_n$ satisfies
\[
\E[e(CP_n)] \sim \frac1{\pi} n \log n.
\]
Moreover, for any $\epsilon > 0$ we asymptotically almost surely have $|e(CP_n) - \frac1{\pi} n \log n| < \epsilon n \log n$.
\end{thm}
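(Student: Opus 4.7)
My plan is to apply linearity of expectation. Each edge of $CP_n$ corresponds to a $4$-tuple $1 \leq i_1 < i_2 < i_3 < i_4 \leq 2n$ together with a choice of whether $\{i_1,i_3\}$ is the red arc and $\{i_2,i_4\}$ is the blue arc, or the color-swapped version. Let $Y_{i_1,i_2,i_3,i_4}$ be the indicator that one of these two alternating configurations occurs in the chosen representative, so that $e(CP_n) = \sum Y_{i_1,i_2,i_3,i_4}$. By red/blue symmetry of the natural uniform model on representatives, $\E[Y_{i_1,i_2,i_3,i_4}] = 2 p(i_1,i_2,i_3,i_4)$, where $p$ denotes the probability of the first configuration.

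To obtain $p$ in closed form, I would exploit the following structural fact about non-crossing matchings: fixing an arc $\{a,b\}$ partitions the remaining points into an interior and an exterior region, which are then matched independently by non-crossing matchings. Thus, conditioning on the coloring (specifically, on how many red points fall in each of the five intervals determined by $i_1,i_2,i_3,i_4$ and the endpoints of $[1,2n]$), the number of representatives realizing the first configuration factors as a product of four Catalan numbers --- for the red points inside/outside $[i_1,i_3]$ and the blue points inside/outside $[i_2,i_4]$ --- weighted by a product of binomial coefficients counting colorings. Dividing by the total number of representatives yields $p$. The asymptotic evaluation then proceeds via the Catalan asymptotic $C_m \sim 4^m/(\sqrt{\pi} m^{3/2})$ and Laplace's method: the sum over colorings should concentrate around balanced configurations (roughly half of each interval red), reducing the problem to a Riemann-sum estimate over the three gap sizes $i_2-i_1$, $i_3-i_2$, and $i_4-i_3$. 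The $\log n$ factor should arise from integrating a $1/m$-type singularity over these gap sizes, and the constant $1/\pi$ should emerge from the Catalan normalization. This asymptotic step is the part I expect to be the main obstacle, since pinning down the sharp leading constant requires uniform control over all gap regimes, including boundary contributions from tuples with some small gap.

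For the concentration claim I would apply the second moment method. Expanding $\E[e(CP_n)^2]$ as a double sum over pairs of $4$-tuples and splitting according to how many positions the two tuples share, pairs with disjoint supports should be asymptotically uncorrelated and contribute approximately $\E[e(CP_n)]^2$, while pairs sharing one or more positions should contribute a variance term of order $o((n \log n)^2)$ (using estimates analogous to those for the first moment but at one level finer). Chebyshev's inequality then yields the asymptotic almost sure estimate $|e(CP_n) - \tfrac{1}{\pi} n \log n| < \epsilon n \log n$ claimed in the theorem.
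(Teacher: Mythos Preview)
Your high-level strategy matches the paper's: linearity of expectation over alternating $4$-tuples, Catalan factorization after conditioning on the coloring, the asymptotic $C_m \sim 4^m/(\sqrt{\pi}\,m^{3/2})$ for the main estimate, and second moment plus Chebyshev for concentration. The paper's parametrization via $(x,k,y,\ell)$ with $k,\ell$ the arc lengths is equivalent to your gap parametrization, and the paper carries out precisely your plan for the first moment by making explicit the region decomposition you flag as ``the main obstacle'': it splits the sum into five regimes according to where $k,\ell$ sit relative to thresholds $d\log n$ and $cn$, shows four regimes contribute $o(n\log n)$, and extracts the constant $1/\pi$ only from the bulk regime $d\log n \le k,\ell \le cn$ where the Catalan ratios are uniformly well-approximated. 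Your appeal to Laplace's method is the right intuition, but the actual execution is this case analysis, and each boundary case (short arc, long arc, endpoints within distance $1$) needs its own argument.

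The genuine gap is in your variance sketch. You write that ``pairs with disjoint supports should be asymptotically uncorrelated and contribute approximately $\E[e(CP_n)]^2$,'' but this is the heart of the matter, not a side remark. Even when two $4$-tuples share no points, their indicators are coupled through the global coloring and through the global non-crossing matching on each color class; for instance, if the red arc of one tuple is nested inside the red arc of the other, the joint probability does \emph{not} factor even approximately without further restrictions on the six gap sizes. The paper devotes an entire appendix to this, with separate lemmas handling pairs of edges sharing an arc, pairs whose top or bottom arcs are nested, pairs with at least one arc of length exceeding $cn$, pairs with some gap below $d\log n$, and pairs whose eight endpoints are not all at pairwise distance $\ge 2$. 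Only after excising all of these does the approximate factorization $\Pr[A(q_1)\cap A(q_2)] \lesssim (1+\epsilon)\Pr[A(q_1)]\Pr[A(q_2)]$ hold on what remains, and each excision bound is comparable in effort to the first-moment computation. Your proposed split ``according to how many positions the two tuples share'' does not capture this structure: the zero-overlap case is itself the hardest and subdivides into many subcases driven by the relative geometry of the four arcs, not by shared endpoints.
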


We also obtain an asymptotic formula for the expected number of isolated vertices in $CP_n$.
\begin{thm}\label{T-isolated}	
Let $I_n$ denote the number of isolated vertices in $CP_n$. Then
\[
	\E[I_n] \sim \gamma n,
\]
where $\gamma$ is the constant defined by
\[
\gamma = 4 \sum_{m=1}^{\infty} 16^{-m} \sum_{b=0}^{m-1} \binom{2m-2}{2b} C_{m-1-b} C_b  = 0.3023\ldots.
\]
Moreover, for any $\epsilon > 0$ we asymptotically almost surely have $|I_n - \gamma n| < \epsilon n$.
\end{thm}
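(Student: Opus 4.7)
The plan is to establish the exact formula for $\E[I_n]$ via linearity of expectation with an interior/exterior decomposition, extract the stated asymptotic, and then prove concentration by the second moment method.

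\emph{First moment.} By red/blue symmetry of the representatives, $\E[I_n] = 2\,\E[\text{number of isolated red arcs}]$, so it suffices to compute
\[
\sum_{m \geq 1}\sum_{i < j : j - i = 2m - 1}\Pr[(i,j)\text{ supports a red isolated arc}].
\]
For fixed endpoints $(i,j)$, I count representatives in which $i,j$ are red, the red matching contains the arc $i$-$j$, and no blue arc alternates with this arc. The non-alternation condition forces every blue point in the interior $(i,j)$ to be matched with another interior blue point. The interior of $(i,j)$ (containing $2m - 2$ positions) then contributes exactly $a_m := \sum_{b=0}^{m-1}\binom{2m-2}{2b}C_{m-1-b}C_b$, obtained by choosing the $2b$ interior blue positions and placing non-crossing matchings on the blues and on the $2(m-1-b)$ interior reds. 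The key observation is that the $2n - 2m$ \emph{exterior} positions are essentially unconstrained: a red arc may nest around $i$-$j$, and a blue arc with both endpoints outside $[i,j]$ trivially fails to alternate with the red arc. The exterior therefore contributes exactly $N_{n-m}$, where $N_k := \sum_{j=0}^k \binom{2k}{2j}C_j C_{k-j}$ is the total number of representatives on $2k$ points; this collapse follows from the Vandermonde identity after splitting the exterior by the number of red points.

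\emph{Asymptotic.} Summing over the $2n-2m+1$ positions for an arc of length $2m$ and dividing by $N_n$ gives the closed form
\[
\E[I_n] = \frac{2}{N_n}\sum_{m=1}^{n}(2n-2m+1)\,a_m\,N_{n-m}.
\]
A short computation gives the identity $N_n = C_n C_{n+1}$, and Stirling then produces $N_n \sim (4/\pi)\cdot 16^n/n^3$. It follows that $N_{n-m}/N_n \to 16^{-m}$ as $n \to \infty$ for each fixed $m$; a uniform domination $N_{n-m}/N_n \leq C \cdot 16^{-m}$ for $m \leq n/2$ follows from the bound $N_n/N_{n-1} \leq 16$, and the large-$m$ tail is controlled by an analogous estimate using the identity $a_m = N_{m-1}$ (the two defining sums coincide on the nose). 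Since $a_m / 16^m \sim 1/(4\pi m^3)$ is summable, dominated convergence yields
\[
\frac{\E[I_n]}{n} \longrightarrow 4\sum_{m=1}^\infty\frac{a_m}{16^m} = \gamma.
\]

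\emph{Concentration.} For the a.a.s.\ statement I verify $\Var(I_n) = o(n^2)$ and apply Chebyshev. Writing $I_n = \sum_u X_u$ with $X_u$ the indicator that arc $u$ is isolated, the second moment $\E[I_n^2] = \sum_{u,v}\Pr[X_u X_v = 1]$ is handled by the same interior/exterior decomposition applied to pairs of potential arcs: one case-analyzes the relative position of $u$ and $v$ (disjoint, one nested inside the other, or sharing an endpoint, and same versus opposite colors). In each case the exterior is again an unconstrained representative on fewer points, contributing a factor $N_{n - m'}$ for an appropriate $m'$, while the two-arc interior contribution is a finite sum analogous to $a_m$. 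The main obstacle will be the bookkeeping across these configurations, but the same decoupling that simplified the first moment ensures the leading term of $\E[I_n^2]$ matches $\gamma^2 n^2$, so $\Var(I_n) = o(n^2)$ and Chebyshev delivers the claimed concentration.
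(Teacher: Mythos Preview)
Your argument computes the expectation in the \emph{wrong probability model}. You write ``dividing by $N_n$ gives the closed form $\E[I_n] = \frac{2}{N_n}\sum_m (2n-2m+1)a_m N_{n-m}$,'' which is correct only if representatives are chosen uniformly at random. But $CP_n$ is not uniform on representatives: the coloring is chosen first (each of the first $2n-1$ points independently red/blue with probability $1/2$), and then the matchings are chosen uniformly on each color class. A representative with $2k$ red points therefore has probability $2^{-(2n-1)}/(C_k C_{n-k})$, which depends on $k$. Your exterior factor $N_{n-m}$ is a \emph{count} of exterior configurations, and counts are exactly what you may not simply divide by $N_n$ here. The identities $N_n = C_n C_{n+1}$ and $a_m = N_{m-1}$ are correct and pleasant, but they feed into a formula for a different random graph.

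The paper's proof avoids this by working directly in the $CP_n$ model: it conditions on the number $2r$ of exterior red points, writes the matching probability as $\frac{C_r}{C_{r+a}}\cdot\frac{C_{n-m-r}}{C_{n-m-r+b}}$ via Lemma~\ref{L-CatProb}, and then uses Chernoff concentration on $r$ together with the Catalan asymptotic \eqref{eq-CatAsy} to show that this inner sum is $\sim 4^{-m}$, yielding $\E[I_{n,m}]\sim\gamma_m n$. The tail in $m$ is then controlled by the arc-count estimate of Lemma~\ref{L-Range}. It is true that your uniform-on-representatives model and $CP_n$ both concentrate near $k=n/2$ and ultimately produce the same constant $\gamma$, but establishing that equivalence is itself nontrivial and is nowhere in your argument; the same gap propagates into your second-moment sketch.
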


In addition to this, we deduce the order of magnitude for the expected number of (induced) subgraphs of any connected Catalan-pair graph with at least three vertices.  To this end, Let $N_H(G)$ denote the number of subgraphs of $G$ that are isomorphic to $H$ and let $N_H^*(G)$ denote the number of induced subgraphs of $G$ that are isomorphic to $H$.

\begin{thm}\label{T-induced}
Let $H$ be a connected Catalan-pair graph on $v \geq 3$ vertices. The expected number of (induced) subgraphs of the random Catalan-pair graph $CP_n$ isomorphic to $H$ satisfies
\[
\E[N_H(CP_n)] = \Theta(n^{v/2}).
\]
\[
\E[N^*_H(CP_n)] = \Theta(n^{v/2}).
\]
\end{thm}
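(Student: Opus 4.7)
The plan is to express the expectation as a weighted sum over ``embeddings'' of $H$ into the $2n$ collinear points, and estimate the sum using Catalan-number asymptotics. Writing $Z_n=\sum_k\binom{2n}{2k}C_kC_{n-k}$ for the total number of representatives, we have
\[
\E[N_H(CP_n)] = \frac{1}{Z_n}\sum_\sigma \#\{\text{representatives extending }\sigma\},
\]
where $\sigma$ ranges over triples consisting of $2v$ positions in $\{1,\ldots,2n\}$, an assignment of each position to the top or bottom row, and a matching on these $2v$ points whose alternation graph is isomorphic to $H$. The extension count factors as a product of Catalan numbers indexed by the $2v+1$ sub-intervals into which the $2v$ chosen positions cut the line, subject to color-compatibility constraints ensuring that the remaining points can be split legally between the top and bottom non-crossing matchings.

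Using $C_m\sim 4^m/(\sqrt{\pi}\,m^{3/2})$ and Laplace's method on $Z_n$, each term becomes (up to lower-order corrections) of the form $n^{\beta(H)}\prod_i r_i^{-3/2}$, where $r_0,\ldots,r_{2v}$ are the sub-interval sizes and $\beta(H)$ depends only on the combinatorial type of $\sigma$. The analysis then reduces to evaluating $\sum_\sigma\prod_i r_i^{-3/2}$ to leading order. For connected $H$ with $v\geq 3$ vertices, the connectivity of $H$ forces enough of the sub-intervals to lie ``between'' endpoints of crossing arcs that the sum concentrates on configurations where the constrained gaps scale like $\sqrt{n}$ while the ``free'' outer gaps scale like $n$, producing a total of $\Theta(n^{v/2})$. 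For $v=2$ this extra concentration is absent, which is precisely why the extra $\log n$ appears in Theorem \ref{T-edges}.

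For the lower bound, I would exhibit an explicit family of $\Omega(n^{v/2})$ embeddings: fix a concrete representative of $H$ on $2v$ points, and vary each ``interior'' gap over an interval of length $\Theta(\sqrt{n})$ while outer gaps range over $\Theta(n)$; a direct count shows each such embedding contributes the expected order of magnitude. For $\E[N_H^*(CP_n)]$ the upper bound is immediate from $N_H^*\leq N_H$, and the lower bound from choosing the above family so that no additional crossings appear (by keeping the non-$\sigma$ arcs nested inside short regions away from $\sigma$). The main obstacle I anticipate is the careful bookkeeping of the principal sum: the color-compatibility constraints couple the top and bottom matchings through the shared gap sizes, and boundary cases where some $r_i=O(1)$ escape the Catalan asymptotic and must be treated separately, likely via a case analysis on the combinatorial type of $\sigma$ combined with a Laplace-style estimate at each boundary stratum.
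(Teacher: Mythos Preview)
Your sketch has the right overall shape (sum over embeddings, Catalan asymptotics, separate upper and lower bounds), but the core heuristic is off and a genuine combinatorial lemma is missing.

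First, the claim that the sum concentrates on configurations with ``interior gaps of order $\sqrt{n}$'' is wrong. In the paper's lower bound one simply spreads the $2v$ endpoints into $2v$ disjoint intervals each of length $\Theta(n)$; every arc then has length $\Theta(n)$, so by the Catalan asymptotic each embedding occurs with probability $\Omega(n^{-3v/2})$, and there are $\Theta(n^{2v})$ such embeddings, giving $\Omega(n^{v/2})$ directly. No $\sqrt{n}$ scale enters, and no extra care is needed to avoid unwanted crossings: since the relative order of the $2v$ endpoints matches a fixed representative of $H$, the induced graph on these $v$ arcs is automatically $H$.

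Second, and more seriously, the upper bound does not follow from ``connectivity forces enough sub-intervals to lie between crossing arcs''. What is actually needed is a bound on the number of embeddings with \emph{prescribed} gap-size vector $(f_0,\ldots,f_s,g_0,\ldots,g_t)$. The paper proves that this number is at most a product of $v$ of the $v+2$ gap sizes, where one may omit any two terms $f_{i_0},g_{j_0}$ with $i_0,j_0\neq 0$ (or one $f_{i_0}$ together with $\max\{f_0,g_0\}$). The proof places the arcs one at a time: once some arcs are down, connectivity guarantees an unplaced arc that crosses a placed one, and this crossing pins its position up to the relevant gap size. This is the step that genuinely consumes the connectivity hypothesis, and it is what makes the summation over gap sizes close up to $O(n^{v/2})$ rather than something larger. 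Your proposal does not contain this idea, and without it the ``principal sum'' you describe cannot be controlled: naively there are $\Theta(n^{2v})$ embeddings and the probability bound $n^3\prod h_i^{-3/2}$ alone (with only two of the $h_i$ forced to be linear) leaves too many free parameters.

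In short: the lower-bound construction you propose is more complicated than necessary and based on the wrong scale, and the upper bound is missing its key ingredient, namely the counting lemma for embeddings with fixed gaps whose proof is exactly where connectivity of $H$ is exploited.
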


The outline of the paper is as follows. In Section~\ref{sec-random} we will define our model to randomly generate Catalan-pair graphs. This model requires us to randomly select a Catalan-arc matching, and in Section~\ref{sec-catalan} we will derive some technical lemmas related to this step. In Section~\ref{sec-edges} we will determine the asymptotic behavior of the expected number of edges, and in Section~\ref{sec-isolated} we will determine the expected number of isolated vertices and show the desired concentration result by bounding the variance of the number of isolated vertices. In Section~\ref{sec-edgevariance} we will similarly bound the variance of the number of edges, with a large part of the proof deferred to Appendix~\ref{appendix-edgevariance}. Section~\ref{sec-induced} will focus on proving Theorem~\ref{T-induced}, and along the way we will prove a more general lower bound for unconnected Catalan-pair graphs. We will end that section with a general result on the connected components of $CP_n$. In Section~\ref{sec-computational} we will discuss experimental data obtained by randomly generating Catalan-pair graphs of various sizes, after which we will end with some final remarks and possible future problems in Section~\ref{sec-conclusion}.

We collect some notation and definitions that we will use throughout the text.  For $1 \leq a < b \leq 2n$, we say that $(a,b)$ \emph{match} if the $a^{\textrm{th}}$ and $b^{\textrm{th}}$ point have the same color and if there is an arc connecting these two points. In the earlier example, the matching pairs are $(1,7)$, $(2,4)$, $(3,5)$, $(6,12)$, $(8,9)$, $(10,16)$, $(11,14)$, $(13,18)$ and $(15,17)$. We similarly say that $(a,b)$ match in a single Catalan-arc matching of size $n$ if there is an arc connecting these two points. For $1 \leq a < b < c < d \leq 2n$ we say that $(a,b,c,d)$ is \emph{an edge} if $(a,c)$ and $(b,d)$ match. For example, in the the graph from before $(6,10,12,16)$ is an edge, and it corresponds to the edge between $u_4$ and $v_2$. We say that an arc in a single Catalan-pair matching has length $k$ if it covers $k-1$ smaller arcs, or equivalently if the two points it connects have $2k-2$ points between them.

\section{Random Catalan-pair graphs} \label{sec-random}
In this section we define a model to generate a random Catalan-pair graphs on $n$ vertices. Consider the following procedure, starting with $2n$ collinear points.
\begin{itemize}
\item[1.] For each of the first $2n-1$ points, uniformly and independently color each of these points either red or blue. Then color the last point red or blue, whichever makes it so that the total number of points of each color is even.
\item[2.] Suppose that we have $2k$ red points, and consequently $2(n-k)$ blue points. Independently and uniformly pick Catalan-arc matchings of size $k$ and $n-k$ from the set of all possible Catalan-arc matchings of that size, and place these above and below the red and blue points respectively.
\item[3.] Create a graph according to Definition~\ref{def-Catalan-pair}, and denote this (random) graph by $CP_n$.
\end{itemize}

One of the advantages of this model is that with high probability roughly half of the points (or any large enough subset of the points for that matter) will be colored red. This is an immediate consequence of the following concentration result, which can be found in a slightly different form in \cite[Cor. A.1.2.]{AlonSpencer}.
For $1\le i\le n$, let $X_i$ denote mutually independent random variables with $\Pr[X_i=1]=\Pr[X_i=0]=\frac{1}{2}$, and define $S_n=\sum_{i=1}^n X_i$.  For $a>0$,
\begin{equation}\label{eq-Conc}
		\Pr[|S_n-n/2|>a]<2e^{-2a^2/n}.
\end{equation}
Note that because of the forced choice of the color of the last point, our setting is not completely identical to that of the above result. However, it does apply for any proper subset of the points, and the concentration result for the total number of points of a given color is almost unaffected.

\section{Random Catalan matchings} \label{sec-catalan}
To generate $CP_n$ we must choose a random Catalan-arc matching from all such matchings of a given size. In this section we compute the probability of having a given set of arcs connecting a given set of points within this randomly chosen Catalan-arc matching.  We note that studying the structure of a random object enumerated by the Catalan numbers is of independent interest, and other work in this direction has been done in, for example, \cite{RC} and \cite{AC}.

Let $\mathcal{C}_n$ denote the set of Catalan-arc matchings of size $n$, and let $C_n = |\mathcal{C}_n| = \frac1{n+1} \binom{2n}{n}$ be the $n$th Catalan number. We recall the asymptotic formula
\begin{equation}\label{eq-CatAsy}
C_n\sim \f{4^n}{\sqrt{\pi}n^{3/2}},
\end{equation}
which can be derived, for example, by Stirling's formula.

Throughout this section, let $C$ be a Catalan-arc matching chosen uniformly from $\mathcal{C}_n$. As mentioned, we are interested in the probability of having a given set of arcs connecting a given set of points within $C$. It is clear that in order for this to be able to happen, the points and arcs have to satisfy some conditions. First of all the endpoints of any given arc must have an even number of points between them, since any arc connecting at least one of these points must connect two of these points. Additionally, it is clear that none of the given arcs are allowed to intersect.

This leads to the following definition, where one should think of having specified arcs connecting points $x_i$ and $x_i+2k_i-1$ for all $i$.

\begin{defn}
Let $\b{x} = (x_1,\ldots,x_s)$ and $\b{k}=(k_1,\ldots,k_s)$ be $s$-tuples of positive integers with $x_1 < \ldots < x_s$. We say that $(\b{x},\b{k})$ is a \emph{valid pair} if
\begin{itemize}
\item[1.] For all $i$ we have $1 \leq x_i < x_i + 2k_i - 1 \leq 2n$.
\item[2.] The integers $x_1, x_1 + 2k_1 - 1, \ldots, x_s, x_s + 2k_s - 1$ are all distinct.
\item[3.] There are no $i \neq j$ with $x_i < x_j < x_i + 2k_i - 1 < x_j + 2k_j - 1$.
\end{itemize}
\end{defn}
As an example, for $n=8$, we have the valid pair $((2,4),(5,2))$ which we think of as having specified arcs connecting points $2$ and $11$ and $4$ and $7$.
\[
\begin{tikzpicture}[scale=0.4]
\node at (1,0) {$\bullet$};
\node at (2,0) {$\bullet$};
\node at (3,0) {$\bullet$};
\node at (4,0) {$\bullet$};
\node at (5,0) {$\bullet$};
\node at (6,0) {$\bullet$};
\node at (7,0) {$\bullet$};
\node at (8,0) {$\bullet$};
\node at (9,0) {$\bullet$};
\node at (10,0) {$\bullet$};
\node at (11,0) {$\bullet$};
\node at (12,0) {$\bullet$};
\node at (13,0) {$\bullet$};
\node at (14,0) {$\bullet$};
\node at (15,0) {$\bullet$};
\node at (16,0) {$\bullet$};
\draw (11,0) arc [radius=4.5, start angle = 0, end angle = 180];
\draw (7,0) arc [radius=1.5, start angle = 0, end angle = 180];
\end{tikzpicture}
\]
As mentioned before, the conditions imposed on $(\b{x},\b{k})$ are necessary for there to be a Catalan-arc matching with arcs on these specified positions. In this case, it is not so hard to see that we can indeed extend this to a Catalan-arc matching, for example as follows.
\[
\begin{tikzpicture}[scale=0.4]
\node at (1,0) {$\bullet$};
\node at (2,0) {$\bullet$};
\node at (3,0) {$\bullet$};
\node at (4,0) {$\bullet$};
\node at (5,0) {$\bullet$};
\node at (6,0) {$\bullet$};
\node at (7,0) {$\bullet$};
\node at (8,0) {$\bullet$};
\node at (9,0) {$\bullet$};
\node at (10,0) {$\bullet$};
\node at (11,0) {$\bullet$};
\node at (12,0) {$\bullet$};
\node at (13,0) {$\bullet$};
\node at (14,0) {$\bullet$};
\node at (15,0) {$\bullet$};
\node at (16,0) {$\bullet$};
\draw[line width=0.4mm] (11,0) arc [radius=4.5, start angle = 0, end angle = 180];
\draw[line width=0.4mm] (7,0) arc [radius=1.5, start angle = 0, end angle = 180];
\draw[color=red] (16,0) arc [radius=7.5, start angle = 0, end angle = 180];
\draw[color=red] (15,0) arc [radius=0.5, start angle = 0, end angle = 180];
\draw[color=red] (13,0) arc [radius=0.5, start angle = 0, end angle = 180];
\draw[color=red] (10,0) arc [radius=3.5, start angle = 0, end angle = 180];
\draw[color=red] (9,0) arc [radius=0.5, start angle = 0, end angle = 180];
\draw[color=red] (6,0) arc [radius=0.5, start angle = 0, end angle = 180];
\end{tikzpicture}
\]
Below we will see that the condition of $(\bf{x},\bf{k})$ being a valid pair is also a sufficient condition to have a Catalan-arc matching with arcs connecting $x_i$ and $x_i+2k_i-1$. In fact, we will determine the explicit probability of having arcs on these given positions. To this end, let $A(\b{x},\b{k})$ denote the event that $(x_i,x_i+2k_i-1)$ match in $C$ for all $i$.

Before we can determine the probability of this happening we need some notation. In the above example, we see that in order to extend to a Catalan-arc matching, we have to connect the two points within the smaller arc, we have to connect the four points within the larger arc (but outside of the smaller arc), and finally we have to connect the six points outside of the larger arc. Below we define integers that are analogues of the two, four, and six above.

For a valid pair $(\b{x},\b{k})$ and $1 \leq i \leq s$, let $M_i$ be the set of $x$ such that $x_i<x<x_i+2k_i-1$ and such that there exists no $j\ne i$ with $x_j\le x\le x_j+2k_j-1$. We let $M_0$ be the set of $x$ such that $1\le x\le 2n$ and such that there exists no $i$ with $x_i\le x\le x_i+2k_i-1$.  Observe that every $x$ with $1\le x\le 2n$ is either of the form $x_i$ or $x_i + 2k_i-1$ for some $i$, or else belongs to a unique $M_i$. Furthermore, it is easy to see that each $M_i$ has an even (possibly $0$) number of elements, so the numbers $m_i = |M_i|/2$ are nonnegative integers, and from the definition it follows that these numbers sum to $n - s$.

We can now explicitly compute the probability that $(x_i,x_i+2k_i-1)$ match in $C$ for all $i$.
\begin{lem}\label{L-CatProb}
If $(\b{x},\b{k})$ is a valid pair, then
\begin{equation*}\label{E-Exp}
\Pr[A(\b{x},\b{k})]=\frac{1}{C_n} \cdot \prod_{i=0}^s C_{m_i}.
\end{equation*}
\end{lem}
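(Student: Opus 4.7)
The plan is to directly count the number of Catalan-arc matchings $C\in\mathcal{C}_n$ that contain each of the specified arcs $(x_i,x_i+2k_i-1)$, and then divide by $C_n=|\mathcal{C}_n|$, since $C$ is chosen uniformly.

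The structural observation I would exploit is that the endpoints of the specified arcs, together with the sets $M_0,M_1,\ldots,M_s$, partition the $2n$ points, with each $|M_i|$ even (as already noted in the excerpt; one can verify this by a simple parity count within each region, using condition 3 of the valid pair definition to show that any arc $j\ne i$ with an endpoint strictly between $x_i$ and $x_i+2k_i-1$ is fully nested inside arc $i$). The central claim is then: any arc of $C$ other than the specified ones must have both endpoints in the same set $M_i$. Indeed, if such an arc had endpoints in two different regions, one checks case by case that it must cross one of the specified arcs --- either the ambient arc bounding its containing region, or a specified arc nested strictly inside --- contradicting the non-crossing property of $C$.

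Conversely, given any non-crossing perfect matching on each set $M_i$ (chosen independently), the union of these matchings with the specified arcs forms a valid non-crossing matching on all $2n$ points, i.e., an element of $\mathcal{C}_n$ realizing $A(\b{x},\b{k})$. Since the number of non-crossing perfect matchings on $2m_i$ collinear points is precisely $C_{m_i}$, and the choices in different regions are independent, this yields a bijection between $\{C\in\mathcal{C}_n:A(\b{x},\b{k})\text{ holds}\}$ and $\prod_{i=0}^s \mathcal{C}_{m_i}$, giving a count of $\prod_{i=0}^s C_{m_i}$. Dividing by $C_n$ produces the formula.

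The only nontrivial step is the non-crossing argument in the second paragraph, which is where I expect any subtlety to arise: one must enumerate the ways an extra arc could bridge two regions and rule each out using the three conditions defining a valid pair. Everything else is routine counting, and as a pleasant byproduct the formula automatically shows that every valid pair extends to at least one Catalan-arc matching, since each $C_{m_i}\ge 1$.
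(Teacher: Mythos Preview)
Your proposal is correct and follows essentially the same approach as the paper: both arguments establish a bijection between Catalan-arc matchings containing the specified arcs and tuples of independent Catalan-arc matchings on the regions $M_0,\ldots,M_s$, by showing that any extra arc must stay within a single $M_i$ (else it crosses a specified arc) and conversely that any such choice yields a valid non-crossing matching. The paper's write-up is slightly more explicit in the crossing argument (it identifies the specific boundary arc that gets crossed rather than leaving it as a case check), but the structure and content are the same.
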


\begin{proof}
Since each Catalan-arc matching is chosen with probability $\frac1{C_n}$, it suffices to show that there are $\prod_{i=0}^s C_{m_i}$ Catalan-arc matchings for which $(x_i,x_i+2k_i-1)$ match for all $i$. We show that a Catalan-arc matching satisfies this condition if and only if points in some $M_i$ are only connected to points in that $M_i$ and the set of arcs on the points in $M_i$ is a Catalan-arc matching.

First, assume that there exists $i \neq j$ such that there is a Catalan-arc matching that connects a point $x$ in $M_i$ to a point $y$ in $M_j$. Without loss of generality we may assume that $j \neq 0$ and that we do not have $x_j < x_i < x_i+2k_i-1 < x_j+2k_j-1$ (if the latter happens, simply switch $i$ and $j$). This implies that $x_j < y < x_j + 2k_j-1$ and $x \not \in [x_j,x_j+2k_j-1]$, but then the arc connecting $x$ and $y$ would intersect the arc connecting $x_j$ and $x_j+2k_j-1$, a contradiction. Furthermore, it is clear that the induced set of arcs on the points in $M_i$ still has no intersecting arcs.

Conversely, suppose we choose Catalan-arc matchings to go on the points of each $M_i$. By definition, there do not exist points $a < b < c < d$ with $a,c \in M_i$ and $b, d \in M_j$ for $i \neq j$, so arcs in $M_i$ and $M_j$ will not intersect when $i \neq j$, and clearly also not for $i = j$. Lastly, points in $M_i$ either lie completely inside an interval $[x_j,x_j+2k_j-1]$ or lie completely outside of it, so arcs on the $M_i$ will also not intersect arcs of the form $(x_j,x_j+2k_j-1)$.

Therefore, since a Catalan-arc matching on the points of $M_i$ has $m_i$ arcs, there are $C_{m_i}$ choices for this matching. Since these choices can be made independently, the total number of desired Catalan-arc matchings equals $\prod_{i=0}^s C_{m_i}$, as desired.
\end{proof}

By combining \eqref{eq-CatAsy} and Lemma~\ref{L-CatProb} we can obtain bounds for this probability.
\begin{cor}\label{C-LargeCat}
Let $(\b{x},\b{k})$ be a valid pair.  There exist positive real numbers $\alpha_s$, $\beta_s$ such that
\[
\alpha_s \f{n^{3/2}}{\prod' m_i^{3/2}} \leq \Pr[A(\b{x},\b{k})] \leq \beta_s \f{n^{3/2}}{\prod' m_i^{3/2}},
\]
where $\prod'$ indicates the product over all $0 \leq i \leq s$ with $m_i \neq 0$.
\end{cor}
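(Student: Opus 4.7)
The plan is to combine Lemma~\ref{L-CatProb} with the Catalan asymptotic \eqref{eq-CatAsy} in a straightforward way; the main (minor) obstacle is handling the indices $i$ with $m_i = 0$, since \eqref{eq-CatAsy} only gives information for $m_i \geq 1$.

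First I would upgrade \eqref{eq-CatAsy} from an asymptotic statement to a two-sided inequality valid for every $n \geq 1$. Since the sequence $C_n n^{3/2} / 4^n$ converges to $1/\sqrt{\pi}$ and is positive for each $n \geq 1$, there exist positive absolute constants $c_1, c_2$ with
\[
c_1 \cdot \frac{4^n}{n^{3/2}} \leq C_n \leq c_2 \cdot \frac{4^n}{n^{3/2}} \qquad \text{for all } n \geq 1.
\]
For $i$ with $m_i = 0$ we simply use $C_{m_i} = C_0 = 1$, so those factors contribute nothing to the product.

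Next I would apply Lemma~\ref{L-CatProb} and substitute these bounds. Let $T \subseteq \{0,1,\ldots,s\}$ be the set of indices with $m_i \neq 0$, noting $|T| \leq s+1$. Then
\[
\prod_{i=0}^s C_{m_i} = \prod_{i \in T} C_{m_i},
\]
which lies between $c_1^{|T|} \prod_{i \in T} \frac{4^{m_i}}{m_i^{3/2}}$ and $c_2^{|T|} \prod_{i \in T} \frac{4^{m_i}}{m_i^{3/2}}$. Using the identity $\sum_{i=0}^s m_i = n - s$ (which also equals $\sum_{i \in T} m_i$), we obtain $\prod_{i \in T} 4^{m_i} = 4^{n-s}$, so
\[
c_1^{|T|} \cdot \frac{4^{n-s}}{\prod' m_i^{3/2}} \leq \prod_{i=0}^s C_{m_i} \leq c_2^{|T|} \cdot \frac{4^{n-s}}{\prod' m_i^{3/2}}.
\]

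Finally I would divide by $C_n$, using the same bounds to write $c_1 \cdot 4^n / n^{3/2} \leq C_n \leq c_2 \cdot 4^n / n^{3/2}$. The $4^n$ cancels with $4^{n-s}$ up to a factor of $4^{-s}$, yielding
\[
\frac{c_1^{|T|}}{c_2} \cdot 4^{-s} \cdot \frac{n^{3/2}}{\prod' m_i^{3/2}} \leq \Pr[A(\b{x},\b{k})] \leq \frac{c_2^{|T|}}{c_1} \cdot 4^{-s} \cdot \frac{n^{3/2}}{\prod' m_i^{3/2}}.
\]
Since $|T| \leq s+1$, both constants can be absorbed into quantities depending only on $s$. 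Setting, say, $\alpha_s = \min(c_1, c_2)^{s+1} \cdot 4^{-s} / \max(c_1, c_2)$ and $\beta_s = \max(c_1, c_2)^{s+1} \cdot 4^{-s} / \min(c_1, c_2)$ gives the claimed bounds. The argument is essentially a bookkeeping exercise; the only subtlety is remembering to exclude zero $m_i$'s from the product, which is exactly what the $\prod'$ notation enforces.
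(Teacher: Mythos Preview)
Your proof is correct and follows essentially the same approach as the paper: upgrade \eqref{eq-CatAsy} to two-sided bounds valid for all $n\ge 1$, handle the $m_i=0$ factors via $C_0=1$, use $\sum_i m_i=n-s$ to cancel the powers of $4$, and absorb the remaining constants into $\alpha_s,\beta_s$. One tiny quibble: your suggested explicit formulas for $\alpha_s$ and $\beta_s$ only work as written if $c_1\le 1\le c_2$ (otherwise $c_1^{|T|}$ is not monotone in $|T|$ the way you need); this is harmless since you can always shrink $c_1$ and enlarge $c_2$ to arrange $c_1\le 1\le c_2$, which is exactly what the paper does.
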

\begin{proof}
Let us prove the lower bound, the proof for the upper bound is analogous. Because of the asymptotic formula in \eqref{eq-CatAsy} there exist positive numbers $a < 1 < A$ such that
\begin{equation}\label{eq-aA}
a \f{4^n}{\sqrt{\pi}n^{3/2}} \leq C_n \leq A \f{4^n}{\sqrt{\pi}n^{3/2}}
\end{equation}
for all $n \geq 1$. Since $C_0 = 1$ we find
\begin{align*}
\Pr[A(\b{x},\b{k})] &= \frac{1}{C_n} \cdot \prod_{i=0}^s C_{m_i} = \frac{1}{C_n} \cdot \sideset{}{'}\prod C_{m_i} \\
    &\geq \frac{\sqrt{\pi} n^{3/2}}{A \cdot 4^n} \cdot \sideset{}{'}\prod \frac{a \cdot 4^{m_i}}{\sqrt{\pi} m_i^{3/2}} \geq 4^{\sum' m_i-n} \cdot \frac{ a^{s+1}}{A\cdot \pi^{s/2}} \frac{n^{3/2}}{\prod' m_i^{3/2}} = \alpha_s \f{n^{3/2}}{\prod' m_i^{3/2}},
\end{align*}
where we use that $\sum' m_i = \sum_{i=0}^s m_i = n-s$.
\end{proof}

\section{The expected number of edges} \label{sec-edges}

In this section we will determine the asymptotic behavior of the expected number of edges of $CP_n$. To this end, we start by establishing a general upper bound on the probability that $CP_n$ contains a given structure on a given set of points.

We consider two analogues of the valid pairs introduced in Section~\ref{sec-catalan}. Let $\b{x}=(x_1,\ldots,x_s),\ \b{k}=(k_1,\ldots,k_s),\ \b{y}=(y_1,\ldots,y_t),\ \b{l}=(\ell_1,\ldots,\ell_t)$ be tuples of positive integers with $x_1 < \ldots < x_s$ and $y_1 < \ldots < y_t$. We say that this quadruple is \emph{valid} if for all $1 \leq i \leq s$ and $1 \leq j \leq t$ we have $1 \leq x_i < x_i + k_i \leq 2n$ and $1 \leq y_j < y_j + \ell_j \leq 2n$, and if there exists at least one representative for a Catalan-pair graph on $n$ vertices for which $(x_i,x_i+k_i)$ and $(y_j,y_j+\ell_j)$ match for all $i,j$.

Similarly, we say that such a quadruple $(\b{x},\b{k},\b{y},\b{l})$ is \emph{good} if
\begin{itemize}
\item[1.] $1 \leq x_i < x_i + k_i \leq 2n$ and $1 \leq y_j < y_j + \ell_j \leq 2n$ for all $1 \leq i \leq s$ and $1 \leq j \leq t$.
\item[2.] Any two numbers of the form $x_i, x_i + k_i, y_j$ or $y_j + k_j$ differ by at least $2$.
\item[3.] There exists no $i \ne j$ such that $x_i<x_j< x_i+k_i< x_j+k_j$ or $y_i<y_j< y_i+\ell_i< y_j+\ell_j$.
\end{itemize}
In the proof of Lemma~\ref{L-GenLowProb} we will see that these conditions imply that there exists a representative for a Catalan-pair graph $G$ such that $(x_i,x_i+k_i)$ and $(y_i,y_i+\ell_i)$ match for all $1 \leq i \leq s$ and all $1 \leq j \leq t$. Therefore, any good quadruple is also a valid quadruple.

Given a valid quadruple $(\b{x},\b{k},\b{y},\b{l})$, we would like to have an analogue of the integers $m_i$ defined in Section~\ref{sec-catalan}. To this end, for $1 \leq i \leq s$,  set $f_i$ to be the number of $x_i < x < x_i+k_i$ such that there is no $i'$ with $x_{i'} \leq x \leq x_{i'}+k_{i'}$ and such that $x$ is not of the form $y_j$ or $y_j + \ell_j$ for any $j$. Set $f_0$ to be the number of $1 \leq x \leq 2n$ that do not belong to any interval $[x_i,x_i+k_i]$, nor are of the form $y_j$ or $y_j + \ell_j$. Similarly define $g_0, g_1, \ldots, g_t$.

Let $(\b{x},\b{k},\b{y},\b{l})$ be a valid quadruple where $\b{x}$ and $\b{y}$ have length $s$ and $t$ respectively. Let $A(\b{x},\b{k},\b{y},\b{l})$ denote the intersection of the following events.
\begin{itemize}
\item[1.] The points $x_i$ and $x_i+k_i$ are colored red and the points $y_j$ and $y_j+\ell_j$ are colored blue for all $i,j$.
\item[2.] For all $i$ and $j$ the number of red points $x$ with $x_i<x<x_i+k_i$ and the number of blue points $y$ with $y_j<y<y_j+\ell_j$ is even.
\item[3.] For all $i$ and $j$ we have that $(x_i,x_i+k_i)$ and $(y_j,y_j+\ell_j)$ match in $CP_n$.
\end{itemize}

We would like to point out that the second condition is necessary for $(x_i,x_i+k_i)$ and $(y_j,y_j+\ell_j)$ to match for all $i$ and $j$. Therefore, we could technically omit this condition, but we have included it to improve the readability of our proofs.

We have the following upper bound for the probability that $A(\bf{x},\bf{k},\bf{y},\bf{l})$ occurs.
\begin{lem}\label{L-upperbound}
There exists a positive real number $\beta_{s,t}$ such that for any valid quadruple $(\b{x},\b{k},\b{y},\b{l})$ (with $\b{x}$ and $\b{y}$ of length $s$ and $t$ respectively) and sufficiently large $n$ we have
\[
\Pr[A(\b{x},\b{k},\b{y},\b{l})] \leq \beta_{s,t} n^3 \cdot \widetilde{\prod_i} f_i^{-3/2} \cdot \widetilde{\prod_j} g_j^{-3/2},
\]
where $\widetilde{\prod}$ indicates the product over all $i$ and $j$ for which $f_i, g_j \geq 16 (s+t) \log n$.
\end{lem}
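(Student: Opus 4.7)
The plan is to condition on the random coloring of the $2n$ points, apply Corollary~\ref{C-LargeCat} separately to the (conditionally independent) red and blue Catalan-arc matchings, and then translate the resulting bound, which is stated in terms of the random face-sizes $m_i$ from that corollary, into the deterministic quantities $f_i,g_j$ appearing here by means of the concentration inequality~\eqref{eq-Conc}.

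In more detail, conditional on any coloring consistent with items~1 and~2 in the definition of $A$, let $2k$ be the number of red points, let $m_0,\dots,m_s$ denote the face-sizes coming from Corollary~\ref{C-LargeCat} applied to the red Catalan-arc matching with specified arcs $(x_i,x_i+k_i)$, and define $\widetilde m_0,\dots,\widetilde m_t$ analogously for blue. Since the two matchings are chosen independently, Corollary~\ref{C-LargeCat} yields
\[
\Pr[\text{item 3 of }A\mid \text{coloring}] \le \beta_s\beta_t\,k^{3/2}(n-k)^{3/2} \sideset{}{'}\prod_{i} m_i^{-3/2}\sideset{}{'}\prod_{j} \widetilde m_j^{-3/2} \le \beta_s\beta_t\,n^3 \sideset{}{'}\prod_{i} m_i^{-3/2}\sideset{}{'}\prod_{j} \widetilde m_j^{-3/2}.
\]

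The key observation is that the $2m_i$ red points in the $i$-th red face coincide with the red points among the $f_i$ positions used to define $f_i$: the blue arc endpoints $y_j,y_j+\ell_j$ in this face are forced blue and thus excluded from both counts, while every other position in the face contributes to $f_i$. Since the colors of the non-endpoint positions are (essentially) i.i.d.\ Bernoulli$(1/2)$ conditional on the forced endpoint colors -- the parity correction at the last point affects a single coordinate and hence leaves the marginal on any strict subset of positions i.i.d.\ Bernoulli$(1/2)$ -- applying~\eqref{eq-Conc} with $a=f_i/4$ gives $\Pr[m_i<f_i/8]\le 2e^{-f_i/8}\le 2n^{-2(s+t)}$ whenever $f_i\ge 16(s+t)\log n$, and the analogous bound holds for $\widetilde m_j$. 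A union bound then shows that the concentration event
\[
G=\bigcap_{i:\,f_i\ge 16(s+t)\log n}\bigl\{m_i\ge f_i/8\bigr\}\cap\bigcap_{j:\,g_j\ge 16(s+t)\log n}\bigl\{\widetilde m_j\ge g_j/8\bigr\}
\]
has probability at least $1-4(s+t)n^{-2(s+t)}$.

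I would then split $\Pr[A]$ according to whether $G$ occurs. On $G$, every factor $m_i^{-3/2}$ for which $f_i\ge 16(s+t)\log n$ is at most $8^{3/2}f_i^{-3/2}$, whereas the remaining factors in $\sideset{}{'}\prod$ satisfy $m_i\ge 1$ and are thus bounded by $1$; an identical argument applies to the blue side. Absorbing the harmless constant $8^{3(s+t+2)/2}\beta_s\beta_t$ into $\beta_{s,t}$, this yields the claimed bound on $G$. On $G^c$ the conditional probability is trivially at most $1$, and since the right-hand side of the lemma is of order at least $n^{3-3(s+t+2)/2}$, the contribution $\Pr[G^c]\le 4(s+t)n^{-2(s+t)}$ from $G^c$ is absorbed into $\beta_{s,t}n^3\widetilde{\prod_i} f_i^{-3/2}\widetilde{\prod_j} g_j^{-3/2}$ for all sufficiently large $n$ (after enlarging $\beta_{s,t}$ if necessary). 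The main obstacle will be the careful identification of the (coloring-dependent) face-sizes $m_i$ of Corollary~\ref{C-LargeCat} with the deterministic quantities $f_i$, together with a clean justification that the relevant colors may be treated as i.i.d.\ Bernoulli$(1/2)$ despite the forced parity of the final point.
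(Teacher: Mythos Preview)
Your proposal is correct and follows essentially the same approach as the paper: condition on the endpoint colors, use~\eqref{eq-Conc} to show that each relevant red/blue face-size is at least $f_i/8$ (resp.\ $g_j/8$) outside an event of probability $O(n^{-2(s+t)})$, apply Corollary~\ref{C-LargeCat} on the good event, and absorb the bad event via $n^{-2(s+t)}=o\bigl(n^{3-3(s+t+2)/2}\bigr)$. The only cosmetic differences are your concentration parameter $a=f_i/4$ (the paper uses $a=\sqrt{(s+t)f_i\log n}$, yielding the same tail bound on the range $f_i\ge 16(s+t)\log n$) and your slightly more explicit handling of the parity constraint at the final point.
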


\begin{proof}
Let $v = (s+t)$. Note that with probability $2^{-2(s+t)} = 4^{-v}$ all of $x_i, x_i+k_i, y_j, y_j+\ell_j$ have the correct color. From now on we condition on this event happening. For each $0 \leq i \leq s$, let $2r_i$ denote the number of points counted by $f_i$ which are colored red, where we note that $r_i$ may not be an integer. For each $i$ with $f_i \geq 16 v \log n$, we use \eqref{eq-Conc} to conclude that
\[
\Pr[|2r_i-f_i/2|>\sqrt{v f_i \log n}] < 2 n^{-2v}.
\]
Note that if $|2r_i-f_i/2|\leq\sqrt{v f_i \log n}$, then in particular we have $2r_i \geq f_i/2 - \sqrt{v f_i \log n} \geq f_i/4$, where we used $f_i \geq 16 v \log n$ in the last step. Therefore, with probability at most $2(v+2)n^{-2v}$ we have $r_i < f_i/8$ or $b_j < g_j/8$ for some $i$ or $j$ for which $f_i, g_j \geq 16 v \log n$.

Let $B_n$ and $R_n$ be the total number of blue and red points respectively. We condition on the event that $r_i \geq f_i/4$ and $b_j \geq g_j/4$ for all $i$ and $j$ for which $f_i, g_j \geq 16 v \log n$. If any of the numbers $r_i, b_j$ is not an integer, or equivalently if the number of red/blue points in some appropriate region is not even, the probability that all of $(x_i,x_i+k_i)$ and $(y_j,y_j+\ell_j)$ match is $0$, which is fine since we are only interested in an upper bound on the probability. If all $r_i$, $b_j$ are integers we can apply Corollary~\ref{C-LargeCat} to show that the probability that all of the $(x_i,x_i+k_i)$ and $(y_j,y_j+\ell_j)$ match is at most
\begin{align*}
\beta_s R_n^{3/2} \cdot \widetilde{\prod_i} r_i^{-3/2} \cdot \beta_t B_n^{3/2} \cdot \widetilde{\prod_j} b_j^{-3/2} &\leq \beta_s \cdot (2n)^{3/2} \cdot \widetilde{\prod_i} (f_i/8)^{-3/2} \cdot \beta_t (2n)^{3/2} \cdot \widetilde{\prod_j} (g_j/8)^{-3/2} \\
    &= O\left(n^3 \cdot \widetilde{\prod_i} f_i^{-3/2} \cdot \widetilde{\prod_j} g_j^{-3/2}\right),
\end{align*}
where in the first expression we ignored all $i$ for which $f_i < 16 v \log n$ since in the formula of Corollary~\ref{C-LargeCat} these terms either do not appear, or they contribute a multiplicative factor of the form $x^{-3/2}$ for some $x \geq 1$, hence leaving it out will still yield an upper bound.

Therefore, we know that
\[
\Pr[A(\b{x},\b{k},\b{y},\b{l})] \leq 4^{-v} \cdot \left(2(v+2)n^{-2v} + O\left(n^3 \cdot \widetilde{\prod_i} f_i^{-3/2} \cdot \widetilde{\prod_j} g_j^{-3/2}\right)\right).
\]
Since $f_i \leq 2n$, $g_i \leq 2n$, and since the above products contain at most $s+1$ and $t+1$ terms respectively, we find
\[
n^3 \cdot \widetilde{\prod_i} f_i^{-3/2} \cdot \widetilde{\prod_j} g_j^{-3/2} \geq n^3 (2n)^{-3/2(s+1+t+1)} = 2^{-3/2(v+2)} \cdot n^{3 - 3/2(v+2)} \gg n^{-2v}
\]
for sufficiently large $n$, and hence the $n^3 \cdot \widetilde{\prod_i} f_i^{-3/2} \cdot \widetilde{\prod_j} g_j^{-3/2}$ term dominates this expression.
\end{proof}

Using similar ideas, we can deduce an upper bound on the expected number of arcs in $CP_n$ whose lengths lie in a specific range.
\begin{lem}\label{L-Range}
		For any $1\le \al \le \be\le 2n$, let $A_{\al,\be}$ denote the number of matching arcs in $CP_n$ of the form $(i,i+k)$ with $\al\le k\le \be$.  Then \[\E[A_{\al,\be}]=O(\al^{-1/2}n+\be ne^{-\al/16}).\]
		
		In particular, if $32\log n\le \al$ we have
		\[
		\E[A_{\al,\be}]=O(\al^{-1/2}n).
		\]
	\end{lem}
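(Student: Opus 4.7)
The plan is to use linearity of expectation, writing
\[
\E[A_{\alpha,\beta}]=\sum_{k=\alpha}^\beta\sum_{i=1}^{2n-k}\Pr[E_{i,k}],
\]
where $E_{i,k}$ is the event that $(i,i+k)$ is a matching arc of $CP_n$, together with a pointwise bound
\[
\Pr[E_{i,k}]=O\!\left(k^{-3/2}+e^{-k/16}\right)
\]
for each such pair (with the natural adjustment that $k^{-3/2}$ is replaced by $(2n-k)^{-3/2}$ when $k>n$, where the roles of the inside and outside of the arc swap). Summing this over $i$ (at most $2n$ choices per $k$) and $k\in[\alpha,\beta]$, and using $\sum_{k\ge\alpha}k^{-3/2}=O(\alpha^{-1/2})$ together with $e^{-k/16}\le e^{-\alpha/16}$, yields $\E[A_{\alpha,\beta}]=O(\alpha^{-1/2}n+\beta n e^{-\alpha/16})$. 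The ``In particular'' clause follows because $\alpha\ge 32\log n$ forces $e^{-\alpha/16}\le n^{-2}$, so $\beta n e^{-\alpha/16}\le 2$, which is absorbed into $O(n\alpha^{-1/2})$.

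For the pointwise bound, fix $(i,k)$. Since $E_{i,k}$ requires $i$ and $i+k$ to have the same color, I condition on the coloring and treat the ``both red'' case (the blue case is symmetric). Let $X$ be the number of red points strictly between $i$ and $i+k$ and let $R$ be the total number of red points. Applying Corollary~\ref{C-LargeCat} with $s=1$ to the single arc $(i,i+k)$ inside the red Catalan matching gives
\[
\Pr[\text{arc matches}\mid\text{coloring, both red}]\;\lesssim\;\frac{R^{3/2}}{X^{3/2}(R-X-2)^{3/2}},
\]
with the $X^{3/2}$ factor omitted (and the right-hand side equal to $O(1)$) when $X=0$. I then split on the high-probability event $G$ that $R\ge n/2$ and that the red points strictly between (resp.\ strictly outside) the arc are at least a quarter of their expected value. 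On $G$, one has $X\gtrsim k$ and $R-X-2\gtrsim n$ in the regime $k\le n$, so the displayed bound reduces to $O(k^{-3/2})$. Off $G$, I use the trivial estimate $\Pr[\text{match}]\le 1$ and control $\Pr[G^c]$ via \eqref{eq-Conc}: $\Pr[R<n/2]=e^{-\Omega(n)}$, and conditional on both endpoints being red, $X\sim\mathrm{Bin}(k-1,1/2)$, giving $\Pr[X<k/4]=O(e^{-k/16})$.

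The main obstacle I anticipate is arranging the pointwise bound to hold uniformly across the entire range $\alpha\le k\le\beta$. For small $k$, Corollary~\ref{C-LargeCat}'s asymptotic regime is not tight and the Chernoff tail $e^{-k/16}$ is of order $1$, but this slackness is exactly what the error term $\beta n e^{-\alpha/16}$ in the statement absorbs. For $k$ close to $2n$, the argument is symmetric with the inside and outside regions swapping roles (so $(2n-k)^{-3/2}$ replaces $k^{-3/2}$), and the smaller number of valid values of $i$ (only $2n-k$ choices) keeps the corresponding contribution within the stated bound. Routine technicalities—the parity of $X$ needed for a match, the forced color of the $2n$-th point, and the degenerate $X=0$ or $R-X-2=0$ cases in Corollary~\ref{C-LargeCat}—affect only the implicit constants.
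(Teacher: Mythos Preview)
Your proposal is correct and follows essentially the same approach as the paper: linearity of expectation over $(i,k)$, a pointwise bound $\Pr[E_{i,k}]=O(k^{-3/2})+(\text{Chernoff tail})$ obtained by conditioning on the coloring and applying Corollary~\ref{C-LargeCat} on the concentration event, then summing. The paper organizes the details slightly differently---it makes the reductions $\alpha=\omega(1)$, $\alpha\le n$, $\beta\le 2n-32\log n$ explicit up front, writes the pointwise bound as $cn^{3/2}(k-1)^{-3/2}(2n-k-1)^{-3/2}+e^{-(k-1)/8}+e^{-(2n-k-1)/8}$, and splits the sum at $\gamma=\min(\beta,n)$---but the substance is the same, and your remarks about the $k>n$ regime (swapping inside and outside, using that there are only $2n-k$ choices of $i$) match the paper's handling of that range.
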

	\begin{proof}
		We first consider some reductions of the problem.  If $\al=O(1)$ the bound is trivial, so we will assume that $\al=\om(1)$. For any $\alpha\ge n$ the proposed bound is $O(\sqrt{n})$, so we can assume without loss of generality that $\alpha\le n$.  Also, for any $k\ge 2n-32\log n$, $CP_n$ contains at most two non-intersecting arcs of length $k$ (one for each color) since $k>n$.  Thus we can assume that $\be\le 2n-32\log n$, which will cause $\E[A_{\al,\be}]$ to decrease by at most $2\cdot 32\log n=O(\al^{-1/2}n)$ when $\al\le n$.
		
		For $\al\le k\le \be$, let $A(i,k)$ denote the event that $(i,i+k)$ matches in $CP_n$. Let $2r_1$ denote the number of points $x$ in $i<x<i+k$ colored red and let $2r_2$ denote the number of points $x$ with $x<i$ or $x>i+k$ colored red, where as before we note that $r_1$ or $r_2$ may not be an integer.  The probability that either $|2r_1-(k-1)/2|>(k-1)/4$ or $|2r_2-(2n-k-1)/2|>(2n-k-1)/4$ is at most $e^{-(k-1)/8}+e^{-(2n-k-1)/8}$.  Conditional on neither of these events occurring, we can proceed as in Lemma~\ref{L-upperbound} and find that the probability of $(i,i+k)$ matching is at most $c n^{3/2}(k-1)^{-3/2}(2n-(k+1))^{-3/2}$ for some absolute constant $c$.  In total then we have that \[\Pr[A(i,k)]\le c n^{3/2}(k-1)^{-3/2}(2n-(k+1))^{-3/2}+e^{-(k-1)/8}+e^{-(2n-k-1)/8}.\]  Moreover, we have that $\Pr[A(i,k)]=0$ for $i> 2n-k$.  Because
\[       E[A_{\al,\be}]=\sum_{k=\al}^\be \sum_{i=1}^{2n} \Pr[A(i,k)],
\]
we have that
\begin{equation}\label{EQ-Arcs}\E[A_{\al,\be}]\le\sum_{k=\al}^\be (2n-k)(c n^{3/2}(k-1)^{-3/2}(2n-(k+1))^{-3/2}+e^{-(k-1)/8}+e^{-(2n-k-1)/8}).
		\end{equation}
		
		Let $\gam=\min(\be,n)$.  For $\al\le k\le \gam$ and $n$ sufficiently large, we have that $(2n-(k+1))\ge \half n$ and $(k-1)\ge\half k$.  Thus the terms in \eqref{EQ-Arcs} are at most \[2n(2^{-3}ck^{-3/2}+2e^{-\al/16}+2e^{-n/16})\le 2^{-2}cnk^{-3/2}+4ne^{-\al/16}.\]  Thus \eqref{EQ-Arcs} restricted to this range is at most
		\[
			\sum_{k=\al}^\gam 2^{-2}cnk^{-3/2}+4ne^{-\al/16}\le 2^{-2}n \int_{\al-1}^\infty cx^{-3/2}dx+4\gam ne^{-\al/16}=O(\al^{-1/2}n+\be ne^{-\al/16}).
		\]
		
		If $\be\le n$ then this completes the proof.  Otherwise we can assume $\be=2n-32\log n$.  Using similar logic as before, for $n\le k\le 2n-32\log n$ we have that the terms of \eqref{EQ-Arcs} are at most \[2^{-2}c(2n-k)^{-1/2}+4ne^{-2\log n}=2^{-2}c(2n-k)^{-1/2}+4n^{-1}.\]
		
		Again summing over the relevant range and bounding our sum with an integral gives an upper bound for \eqref{EQ-Arcs} in this range of
		\[
			\sum_{k=n}^{2n-32\log n} (2n-k)^{-1/2}+4n^{-1}=O(\sqrt{n})=O(\al^{-1/2}n).
		\]
		
		Summing the contributions from these ranges gives the desired result. \end{proof}

\subsection{The expected number of edges} \label{subsec-edges}

We are now ready to prove the first part of Theorem~\ref{T-edges}.  We will do so by showing that for any $\epsilon > 0$ we have
\begin{equation}\label{eq-edges}
(1-\epsilon)\frac1{\pi} n \log n +  o(n \log n) \leq \E[e(CP_n)] \leq (1+\epsilon) \frac1{\pi} n \log n + o(n \log n).
\end{equation}

It is clear that
\begin{equation}\label{eq-exp-edges}
\E[e(CP_n)] = \sum \Pr[A(x,k,y,\ell)],
\end{equation}
where the sum is over all valid quadruples $(x,k,y,\ell)$ of positive integers such that $1 \leq x < y < x+k < y+\ell \leq 2n$ or $1 \leq y < x < y+\ell < x+k \leq 2n$.

We break up this sum into various parts, and we will show that all but one will contribute $o(n \log n)$, and that the remaining part will contribute between $(1-\epsilon) \frac1{\pi} n \log n$ and $(1+\epsilon) \frac1{\pi} n \log n$. Let $c < 1$ be a positive real number and $d$ be a positive integer, where eventually we will pick $c$ small and $d$ large to get our bounds within the desired $(1 \pm \epsilon)$ region.

\begin{prop} \label{prop-edges}
Consider the contribution to \eqref{eq-exp-edges} coming from each of the following subsets of the quadruples.
\begin{itemize}
\item[(i)] Valid quadruples $(x,k,y,\ell)$ with $k < d \log n$ or $\ell < d \log n$.
\item[(ii)] Valid quadruples $(x,k,y,\ell)$ with $k > 2n - d \log n$ or $\ell > 2n - d \log n$.\item[(iii)] Quadruples $(x,k,y,\ell)$ with $d \log n \leq k,\ell \leq 2n - d \log n$ that are valid but not good.
\item[(iv)] Good quadruples $(x,k,y,\ell)$ with $d \log n \leq k \leq cn < \ell \leq 2n - d \log n$ or $d \log n \leq \ell \leq cn < k \leq 2n - d \log n$.
\item[(v)] Good quadruples $(x,k,y,\ell)$ with $cn < k,\ell \leq 2n - d \log n$.
\end{itemize}
Each of these contributions is $o(n \log n)$.
\end{prop}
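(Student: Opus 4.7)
The plan is to handle the five sub-cases independently. Cases (i) and (ii) use a degree-counting argument together with per-span probability bounds extracted from the proof of Lemma~\ref{L-Range}, while cases (iii)--(v) apply Lemma~\ref{L-upperbound} combined with careful combinatorial counting.

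For case (i), I would first observe that any arc of span $k$ in $CP_n$ participates in at most $k-1$ edges, since each incident edge involves an arc of the opposite color with exactly one endpoint among the $k-1$ interior points of the given arc. Writing $E_k$ for the expected number of arcs of span exactly $k$, the contribution is bounded by $2\sum_{k=1}^{d\log n}(k-1)E_k$, with the factor of $2$ accounting for the two colors. The per-$k$ probability bound
\[
\Pr[A(i,k)] \le c n^{3/2}(k-1)^{-3/2}(2n-k-1)^{-3/2} + e^{-(k-1)/8} + e^{-(2n-k-1)/8}
\]
established inside the proof of Lemma~\ref{L-Range} implies $E_k = O(n k^{-3/2}) + O(n e^{-k/8})$ for all $k\ge 2$, so
\[
\sum_{k=2}^{d\log n}(k-1)E_k \le O(n)\sum_{k\ge 1} k^{-1/2} + O(n)\sum_{k\ge 1} k e^{-k/8} = O(n\sqrt{\log n}) = o(n\log n).
\]
Case (ii) follows by the symmetric argument: arcs of span $k > 2n - d\log n$ have at most $2n-k-1 < d\log n$ exterior points and hence degree at most $d\log n$, while the endgame portion of the proof of Lemma~\ref{L-Range} produces $O(\sqrt{\log n})$ such arcs, for a total of $O((\log n)^{3/2})$.

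For cases (iii)--(v), I apply Lemma~\ref{L-upperbound} with $s=t=1$, for which the concentration threshold is $32\log n$. Taking $d\ge 33$, each of $f_0, f_1, g_0, g_1$ exceeds this threshold whenever $d\log n \le k,\ell \le 2n-d\log n$, yielding
\[
\Pr[A(x,k,y,\ell)] \le \beta\, n^3 \bigl[k\ell(2n-k)(2n-\ell)\bigr]^{-3/2}.
\]
In case (iii), failure of goodness forces one of the four cross-color pairs of endpoints to lie at distance exactly $1$, since $k,\ell\ge 3$ precludes same-arc adjacency. Each such equality cuts the free parameters from $4$ to $3$, leaving $O(n^3)$ configurations. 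Combining this with the probability bound and the estimate $\sum_{k,\ell}[k\ell(2n-k)(2n-\ell)]^{-3/2} = O(n^{-3}/\log n)$ yields $O(n/\log n)$ per adjacency type, and summing over the $O(1)$ types gives $o(n\log n)$.

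For cases (iv) and (v), the crucial quantity is the number $T(k,\ell)$ of $4$-tuples $(a,b,c,d)$ with $c-a=k$ and $d-b=\ell$ forming an edge configuration. A direct count gives $T(k,\ell) = \Theta\bigl((2n-k-\ell)\min(k,\ell) + \min(k,\ell)^2\bigr)$ when $k+\ell\le 2n$ and $T(k,\ell) = \Theta(\min(2n-k,2n-\ell)^2)$ when $k+\ell > 2n$. Substituting these into the probability bound and splitting according to these two regimes yields $O(n)$ contributions in each of cases (iv) and (v). The main obstacle is precisely this refined count of $T(k,\ell)$: the naive bound $T(k,\ell) = O(n^2)$ produces $O(n^2/\log n)$ in case (v), far exceeding $n\log n$. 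The cancellation relies on recognizing that when both spans are close to $2n$ the two arcs must overlap heavily, which tightly constrains the placement of the second arc relative to the first; an analogous tightening in the $k+\ell > 2n$ subregime is required to handle case (iv).
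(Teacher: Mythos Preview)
Your approach is correct and follows the same overall architecture as the paper: degree/arc-count arguments for (i)--(ii), and Lemma~\ref{L-upperbound} plus quadruple-counting for (iii)--(v). A few points of comparison are worth noting.

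For (i), the paper splits at $\sqrt{\log n}$ (degree $\le \sqrt{\log n}$ for very short arcs, then Lemma~\ref{L-Range} for the range $[\sqrt{\log n},d\log n]$), whereas your unified sum $\sum (k-1)E_k$ with $E_k = O(nk^{-3/2}) + O(ne^{-k/8})$ is arguably cleaner and gives the same $O(n\sqrt{\log n})$. For (ii), the paper uses the purely deterministic observation that there are at most $2d\log n$ arcs of span $>2n-d\log n$ (at most two per length), yielding $O((\log n)^2)$; your expected-count argument is also valid but needs a line of computation rather than a direct citation of Lemma~\ref{L-Range}.

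The substantive difference is in (iv)--(v). Your two-regime $\Theta$ formula for $T(k,\ell)$ is correct, but unnecessary: the paper simply uses the upper bounds
\[
T(k,\ell)\le 2k(2n-\ell)\quad\text{for (iv) with }k\le \ell,\qquad T(k,\ell)\le (2n-k)(2n-\ell)\quad\text{for (v)},
\]
obtained by first choosing one endpoint (at most $2n-\ell$, resp.\ $2n-k$ options) and then the other. These single bounds dominate both of your regimes, avoid the split at $k+\ell=2n$, and plug directly into the probability bound to give $O(n)$ after factoring the double sum. Your sharper count works too, but you only assert that the resulting sums are $O(n)$ without carrying them out; the paper's simpler bounds make that verification a short explicit computation.
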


\begin{proof}
\begin{itemize}
\item[(i)] This contribution counts the expected number of edges that come from pairs of arcs with at least one arc of length at most $d \log n$. We first show that the number of such edges with at least one arc of length at most $\sqrt{\log n}$ is of order $o(n \log n)$ in any Catalan-pair graph, and therefore also in expectation. Indeed, any arc of length at most $\sqrt{\log n}$ has degree at most $\sqrt{\log n}$ since every interlacing arc must have one of its endpoints within the given arc. Since we have at most $n$ arcs of length at most $\sqrt{\log n}$, the total number of such edges is at most $n \sqrt{\log n} = o(n \log n)$.

    Now consider the edges involving an arc of length between $\sqrt{\log n} $ and $d \log n$. By Lemma~\ref{L-Range} there are at most $O(n(\log n)^{-1/4}+ \log n \cdot n e^{-\sqrt{\log n}/16})=o(n)$ such arcs in expectation.  Since each such arc can be involved in at most $d\log n$ edges, we conclude that the total expected number of edges involving vertices of this type is at most $o(n\log n)$.

\item[(ii)] This contribution counts the expected number of edges that come from a pair of arcs where at least one of the arcs has length larger than $2n - d \log n$. We show that the number of such arcs is $O((\log n)^2) = o(n \log n)$ for any Catalan-pair graph, which implies the same bound for the expected number of such edges. First, note that for $n$ large enough and each $N > 2n - d \log n$ there is at most one arc of length $N$ on either side.  Indeed, since $2n - d \log n > n$ for $n$ large enough, if we had two arcs of length $N$ on one side this would contradict the condition that the arcs do not intersect. Therefore, there are at most $2d \log n$ arcs of length at least $2n - d \log n$. Furthermore, each such arc interlaces with at most $d \log n$ arcs on the opposite side. Indeed, any such interlacing arc must have one of its endpoints outside the arc in question, and there are at most $d \log n$ such points. Therefore, we have at most $2 d \log n \cdot d \log n = O((\log n)^2)$ such edges, as desired.
\item[(iii)] We assume $d > 32$ in order to apply Lemma~\ref{L-upperbound}.

We know that for any $(x,k,y,\ell)$ in this range we have $\Pr(A(x,k,y,\ell)) = O(n^3 (2n-(k+2))^{-3/2} k^{-3/2} (2n-(\ell+2)^{-3/2} \ell^{-3/2})$. Furthermore, given $k$ and $\ell$ we claim that there are at most $16n$ quadruples $(x,k,y,\ell)$ that are valid but not good. This follows since there are at most $2n$ possibilities for $x$, and given $x$ we must have that $y$ or $y+\ell$ belongs to $\{x \pm 1, x+k\pm 1\}$.

Therefore, the total contribution is at most of the order of
\[
n^4 \sum_{k,\ell} (2n-(k+2))^{-3/2} k^{-3/2} (2n-(\ell+2))^{-3/2} \ell^{-3/2} = n^4 \left(\sum_k (2n-(k+2))^{-3/2} k^{-3/2}\right)^2.
\]
We can break up $\sum_k (2n-(k+2))^{-3/2} k^{-3/2}$ in the regions $k \leq n$ and $k > n$. When $k \leq n$ we have $(2n-(k+2))^{-3/2} \leq (n-2)^{-3/2}$, hence the contribution is at most $(n-2)^{-3/2} \sum_{k} k^{-3/2} =  O(n^{-3/2})$, since the sum of $k^{-3/2}$ is bounded. By a similar reasoning the other contribution is $O(n^{-3/2})$, so
\[
n^4 \left(\sum_k (2n-(k+2))^{-3/2} k^{-3/2}\right)^2 = n^4 O(n^{-3/2})^2 = O(n) = o(n \log n),
\]
as was to be shown.
\item[(iv)] Again we assume $d > 32$. Also, we only consider the case $d \log n \leq k \leq cn < \ell \leq 2n - d \log n$, the other case is analogous.

We claim that for given $k$ and $\ell$ there are at most $(2n-\ell) \cdot 2k$ good quadruples $(x,k,y,\ell)$. This holds since $y$ has to satisfy $y+\ell \leq 2n$, and after choosing $y$ we must have that $y-k \leq x \leq y-1$ or $y+\ell-k \leq x \leq y+\ell-1$, leaving at most $2k$ choices for $x$. Therefore, this region contributes at most
    \[
    \sum_{k=d \log n}^{cn} \sum_{\ell=cn}^{2n-d\log n} (2n-\ell) \cdot 2k \cdot n^3 (2n-(k+2))^{-3/2} k^{-3/2} (2n-(\ell+2))^{-3/2} \ell^{-3/2}.
    \]
    Note that this sum breaks up as
    \[
    2 n^3 \left(\sum_{k=d \log n }^{cn} k^{-1/2} (2n-(k+2))^{-3/2}\right) \cdot \left(\sum_{\ell=cn}^{2n-d \log n} \ell^{-3/2} (2n-\ell) \cdot (2n-(\ell+2))^{-3/2}\right).
    \]
    Using $(2n-(k+2))^{-3/2} \leq 2^{3/2} n^{-3/2}$ we find that
    \begin{align*}
    \sum_{k=d \log n}^{cn} k^{-1/2} (2n-(k+2))^{-3/2} &= O(n^{-3/2}) \cdot \sum_{d \log n+2}^{cn} k^{-1/2} \\
        & = O(n^{-3/2}) \cdot O(n^{1/2}) = O(n^{-1})
    \end{align*}
    where the second equality follows from comparison of the sum with an integral. An analogous computation shows that
    \[
    \sum_{\ell=cn}^{2n-d \log n} \ell^{-3/2} (2n-\ell) \cdot (2n-(\ell+2))^{-3/2} = O(n^{-1}),
    \]
    and therefore this range of $k$ and $\ell$ contributes at most $2n^3 \cdot O(n^{-1}) \cdot O(n^{-1}) = O(n)$, which is in particular $o(n \log n)$ as desired.
\item[(v)] Again we estimate the number of good quadruples $(x,k,y,\ell)$ for given $k$, $\ell$. Similar to above we have at most $(2n-k)$ and $(2n-\ell)$ choices for $x$ and $y$ respectively, and therefore we have at most $(2n-k)(2n-\ell)$ good quadruples in total. Thus this part of the sum contributes at most
    \[
    \sum_{k,\ell=cn}^{2n-d\log n} (2n-k) (2n-\ell) \cdot n^3 (2n-(k+2))^{-3/2} k^{-3/2} (2n-(\ell+2))^{-3/2} \ell^{-3/2}.
    \]
As in case 4, this factors as
    \[
    n^3 \left(\sum_{k=cn}^{2n-d \log n} k^{-3/2} (2n-k) (2n-(k+2))^{-3/2}\right) \cdot \left(\sum_{\ell=cn}^{2n-d \log n} \ell^{-3/2} (2n-\ell) \cdot (2n-(\ell+2))^{-3/2}\right).
    \]
    Each of the above sums will be $O(n^{-1})$ by the same argument as before. We conclude that the total contribution of these terms to the original sum is at most $n^3 \cdot O(n^{-1}) \cdot O(n^{-1}) = O(n) = o(n \log n)$, completing the proof. \qedhere
\end{itemize}
\end{proof}

We point out that using  Lemma~\ref{L-upperbound} and similar arguments to the ones used in cases 4 and 5 can be used to show that the region $d \log n \leq k,\ell \leq cn$ will contribute $O(n \log n)$ to the expected number of edges. In fact, using Lemma~\ref{L-GenLowProb} which we prove later on, we can also show a lower bound of $\Omega(n \log n)$ for this contribution. However, with a little bit more care it is possible to determine the exact constant.  We first require a probability lemma.

\begin{lem} \label{L-Chernoff}
Let $X_1, X_2, X_3, \ldots$ be independent random variables with $\Pr(X_i=0) = \Pr(X_i=1) = 1$, and set $S_j = \sum_{i=1}^j X_i$.
For $\epsilon > 0$, $d\ge 20/\epsilon^2$ and $j > d \log n$ we have
\[
P(|S_j - j/2| < \epsilon j/2) < 2 n^{-10}.
\]
\end{lem}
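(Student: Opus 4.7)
The statement as typeset almost certainly contains two typos: the Bernoulli parameter should be $\tfrac{1}{2}$ rather than $1$, and the inequality inside the probability should be $|S_j-j/2|>\epsilon j/2$ (we want a tail bound, not a bound on the typical concentration event). Under that reading, the lemma is just a direct application of the Hoeffding/Chernoff inequality \eqref{eq-Conc} that the authors already recorded.

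The plan is to apply \eqref{eq-Conc} with $n$ replaced by $j$ and $a=\epsilon j/2$. This gives
\[
\Pr\!\l[\,|S_j-j/2|>\tfrac{\epsilon j}{2}\,\r] \;<\; 2\exp\!\l(-\f{2(\epsilon j/2)^2}{j}\r) \;=\; 2\exp\!\l(-\f{\epsilon^2 j}{2}\r).
\]
The only remaining task is to plug in the hypothesis $j>d\log n$ with $d\ge 20/\epsilon^2$: we then have $\epsilon^2 j/2 > \epsilon^2 d (\log n)/2 \ge 10\log n$, so the right-hand side is bounded above by $2e^{-10\log n}=2n^{-10}$, as required.

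There is no real obstacle here; the whole argument is one line of substitution into \eqref{eq-Conc}, once one fixes the typos. If one wanted to avoid citing \eqref{eq-Conc} directly, the same inequality can be obtained via the standard Chernoff-bound computation $\E[e^{tX_i}]=\cosh(t/2)\le e^{t^2/8}$ followed by Markov, optimizing $t=2\epsilon$; but since the paper has already recorded \eqref{eq-Conc}, there is no reason to redo that work. The constants $20$ and $10$ in the statement are chosen exactly so that the exponent $\epsilon^2 j/2$ is at least $10\log n$, which is the only place where the hypothesis $d\ge 20/\epsilon^2$ is used.
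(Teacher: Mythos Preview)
Your proof is correct and is essentially identical to the paper's: apply \eqref{eq-Conc} with $a=\epsilon j/2$ to get $2\exp(-\epsilon^2 j/2)$, then use $j>d\log n$ and $d\ge 20/\epsilon^2$ to bound this by $2n^{-10}$. Your identification of the two typos in the statement is also accurate.
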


\begin{proof}
By \eqref{eq-Conc}, the desired probability is at most
\[
2 \exp(- 2 (\epsilon j/2)^2/j) = 2 \exp(- \epsilon^2 j/2) \leq 2 \exp(- \epsilon^2 d \log n/2) = 2 n^{-\epsilon^2 d/2} \leq 2 n^{-10}
\]
since $d \geq 20/\epsilon^2$.
\end{proof}

By Proposition~\ref{prop-edges}, in order to show \eqref{eq-edges} it suffices to prove that for suitably small $c$ and sufficiently large $d$ the contribution from good quadruples with $d \log n \leq k,\ell \leq cn$ is between
\[
(1-\epsilon) \frac1{\pi} n \log n \quad \textup{and} \quad (1+\epsilon) \frac1{\pi} n \log n.
\]
To this end we introduce the following notation, which intuitively means that two expression asymptotically gets arbitrarily close for $n \rightarrow \infty$, \emph{independent of all other variables}, provided one picks a suitably small $c$ and a suitably large $d$.

\begin{defn}
Let $f$ and $g$ be two functions with the same domain taking positive values, and whose inputs depend on some positive integer $n$ and some other integer variables, some of which are restricted to the interval $[d \log n,cn]$. We say that $f \simac g$ if for any $\epsilon > 0$ there exist suitable $c$, $d$ and $N$ with
\[
(1-\epsilon) f(x) \leq g(x) \leq (1+\epsilon) f(x)
\]
for any input $x$ with $n \geq N$.
\end{defn}

Here the subscript ac denotes that we do not have the exact asymptotic behavior, but that we get \emph{arbitrary close} asymptotic behavior by choosing suitable $c$ and $d$.

We now want to show that
\[
\sum_{(x,k,y,\ell)} \Pr[A(x,k,y,\ell)] \simac \frac1{\pi} n \log n
\]
where the sum is over all good quadruples $(x,k,y,\ell)$ with $d \log n \leq k,\ell \leq cn$. The desired result follows by the steps in the proposition below.

\begin{prop}\label{prop-edgeasym}
We have the following statements.
\begin{itemize}
\item[(i)] $\Pr[A(x,k,y,\ell)] \simac \frac1{16 \pi} k^{-3/2} \ell^{-3/2}$.
\item[(ii)] Let $g(k,\ell)$ be the number of pairs $(x,y)$ such that $(x,k,y,\ell)$ is a good quadruple. Then $g(k,\ell) \simac 4n \cdot \min\{k,\ell\}$.
\item[(iii)] We have
\[
\frac{n}{4\pi} \sum_{d \log n \leq k,\ell \leq cn} k^{-3/2} \ell^{-3/2} \cdot \min\{k,\ell\} \simac \frac{1}{\pi} n \log n.
\]
\end{itemize}
\end{prop}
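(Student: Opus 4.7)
The plan is to treat each of the three parts separately; part~(i) is the analytic heart, while (ii) is a counting exercise and (iii) reduces to a routine integral estimate.

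For part~(i), I first decompose $\Pr[A(x,k,y,\ell)]$ by conditioning on the random coloring. Let $r_1$ (resp.~$r_2$) denote the number of red arcs strictly inside (resp.~outside) the interval $[x,x+k]$ and let $R = r_1+r_2+1$; define $b_1,b_2,B$ analogously on the blue side. Lemma~\ref{L-CatProb} yields, conditional on any coloring satisfying conditions 1 and 2 of $A$, the exact matching probability $\frac{C_{r_1}C_{r_2}}{C_R}\cdot\frac{C_{b_1}C_{b_2}}{C_B}$. The probability that conditions 1 and 2 hold equals $\tfrac{1}{64}$: condition 1 has probability $2^{-4}$ (the forced color of the $2n$th point only enforces a global parity and does not affect this), and given condition 1 the two parity conditions in 2 are independent fair coin flips, since the intervals are disjoint and lie among the first $2n-1$ independently colored points. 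Next, Lemma~\ref{L-Chernoff} applied to each of the four regions shows that, except on an event of probability $O(n^{-10})$, we have $r_1 = (k/4)(1+O(\sqrt{\log n/k}))$, $r_2 = R-r_1-1$ with $R/r_2 = 1+O(k/n)$, and analogous estimates for $b_1,b_2,B$. This exceptional event contributes only $O(n^{-10})$ to $\Pr[A]$, which is $o(k^{-3/2}\ell^{-3/2})$ since $k,\ell\leq 2n$.

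On the typical event I insert the asymptotic $C_n \sim 4^n/(\sqrt{\pi}\,n^{3/2})$. Because $r_1+r_2-R = -1$, the powers of $4$ collapse to a factor $1/4$, yielding
\[
\frac{C_{r_1}C_{r_2}}{C_R} = \frac{1}{4\sqrt{\pi}}\cdot\frac{R^{3/2}}{r_1^{3/2}r_2^{3/2}}\,(1+o(1)) \simac \frac{1}{4\sqrt{\pi}(k/4)^{3/2}} = \frac{2}{\sqrt{\pi}\,k^{3/2}},
\]
where I use $R/r_2 = 1+O(c)$ (from $k\leq cn$) and $r_1 = (k/4)(1+O(1/\sqrt{d}))$ (from $k\geq d\log n$). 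Multiplying by the analogous blue factor $\frac{2}{\sqrt{\pi}\,\ell^{3/2}}$ and by $\frac{1}{64}$ gives $\Pr[A(x,k,y,\ell)] \simac \frac{1}{16\pi}k^{-3/2}\ell^{-3/2}$, as claimed.

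For part~(ii), I count good quadruples for fixed $k,\ell$ in the two edge configurations $x<y<x+k<y+\ell$ and $y<x<y+\ell<x+k$. Writing $p = y-x$ (resp.\ $p = x-y$) and applying the ``differ by $\geq 2$'' constraints, each configuration admits exactly $\min\{k,\ell\}-3$ valid values of $p$, and for each $p$ exactly $2n-\ell-p$ (resp.\ $2n-k-p$) valid choices of $x$ (resp.\ $y$) satisfying $1\leq x$ and $y+\ell\leq 2n$. Summing gives $2n\min\{k,\ell\} + O(\min\{k,\ell\}\cdot\max\{k,\ell\})$ per configuration. Since $k,\ell\leq cn$, the error is at most $cn\cdot\min\{k,\ell\}$, so the two configurations together give $g(k,\ell)\simac 4n\min\{k,\ell\}$.

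For part~(iii), I exploit the symmetry in $k,\ell$: writing $T = \sum_{d\log n\leq k\leq\ell\leq cn} k^{-1/2}\ell^{-3/2}$, the full double sum equals $2T$ minus the diagonal $\sum_k k^{-2} = O(1/\log n) = o(\log n)$. Comparing the inner sum to $\int_k^{cn}\ell^{-3/2}\,d\ell = 2k^{-1/2}-2(cn)^{-1/2}$ and then the outer sum to $\int_{d\log n}^{cn}k^{-1}\,dk = \log(cn/(d\log n)) = \log n + O(\log\log n)$ gives $T = 2\log n\,(1+o(1))$, hence the double sum is $4\log n\,(1+o(1))$, and multiplying by $n/(4\pi)$ yields $n\log n/\pi$. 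The main obstacle is part~(i), where one must simultaneously control the concentration error $O(1/\sqrt{d})$, the bias $R/r_2-1 = O(c)$, the $o(1)$ error in Stirling's asymptotic, and the trivial boundary effects, all while maintaining a multiplicative error structure compatible with the $\simac$ framework.
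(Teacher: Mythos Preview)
Your approach matches the paper's in all three parts, and your computations are correct. There is, however, one genuine slip in the justification of part~(i): you claim that the two parity events in condition~2 are independent ``since the intervals are disjoint,'' but for an edge configuration $x<y<x+k<y+\ell$ the intervals $(x,x+k)$ and $(y,y+\ell)$ overlap in $(y,x+k)$. The conclusion that $\Pr[\text{condition 2}\mid\text{condition 1}]=\tfrac14$ is nonetheless correct; the paper establishes it by noting that, once all points except $x+1$ and $y+\ell-1$ are colored, there is a unique choice of these two colors achieving both parities (which works because $x+1\notin(y,y+\ell)$ and $y+\ell-1\notin(x,x+k)$, using the good-quadruple spacing). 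Equivalently, with free-point parities $\alpha,\beta,\gamma$ in the three regions $(x,y),(y,x+k),(x+k,y+\ell)$, the pair $(\alpha+\beta,\beta+\gamma)$ is uniform on $\mathbb{F}_2^2$. You should replace the ``disjoint intervals'' sentence with one of these arguments.

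In part~(ii) you actually do slightly more than the paper, which only proves the cruder bounds $(4-6c)n(k-3)\le g(k,\ell)\le 4nk$; your exact count via the offset $p$ is cleaner. Part~(iii) is the same integral comparison as the paper's, with the innocuous difference that you sum over $\ell$ first rather than over $k$.
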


Before proving this proposition, we first show that this implies the asymptotic result of Theorem~\ref{T-edges}.
\begin{cor}\label{C-edges1}
    The expected number of edges of $CP_n$ satisfies
    \[
        \E[e(CP_n)]\sim \rec{\pi}n\log n.
    \]
\end{cor}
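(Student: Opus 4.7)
The plan is to combine Proposition~\ref{prop-edges} with Proposition~\ref{prop-edgeasym} via the decomposition \eqref{eq-exp-edges}. First I would partition the valid quadruples contributing to $\sum\Pr[A(x,k,y,\ell)]$ into the five regions (i)--(v) of Proposition~\ref{prop-edges} together with the one remaining region, namely good quadruples with $d\log n\le k,\ell\le cn$. Proposition~\ref{prop-edges} shows that each of the five regions contributes only $o(n\log n)$; since those bounds hold for any fixed admissible $c$ and $d$, they collect into a single $o(n\log n)$ error term that is independent of the parameters to be chosen below.

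Next I would isolate the main contribution. Since $\Pr[A(x,k,y,\ell)]$ depends only on $k$ and $\ell$, I can fix $k$ and $\ell$ and sum over the $g(k,\ell)$ admissible pairs $(x,y)$. Applying parts (i) and (ii) of Proposition~\ref{prop-edgeasym} yields
\[
\sum_{\substack{(x,k,y,\ell)\ \text{good} \\ d\log n\le k,\ell\le cn}} \Pr[A(x,k,y,\ell)] \simac \frac{n}{4\pi}\sum_{d\log n\le k,\ell\le cn} k^{-3/2}\ell^{-3/2}\min\{k,\ell\},
\]
and part (iii) of the same proposition identifies the right-hand side as $\simac \tfrac{1}{\pi}n\log n$.

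Finally I would unpack the $\simac$ notation: given $\epsilon>0$ there exist $c$, $d$ and $N$ such that for all $n\ge N$ the main contribution lies in $\bigl[(1-\epsilon)\tfrac{1}{\pi}n\log n,\,(1+\epsilon)\tfrac{1}{\pi}n\log n\bigr]$. Adding the $o(n\log n)$ contributions from (i)--(v) then gives precisely \eqref{eq-edges}, and letting $\epsilon\to 0$ proves the corollary. No genuine obstacle appears here since all the technical work is already encoded in Propositions~\ref{prop-edges} and~\ref{prop-edgeasym}; the only point deserving care is that composing finitely many $\simac$ relations is legitimate, which follows because a product of finitely many $(1\pm\epsilon)$-type multiplicative errors still yields an arbitrarily small multiplicative error after adjusting $\epsilon$.
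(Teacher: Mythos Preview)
Your proof is correct and follows essentially the same route as the paper: reduce to the main region via Proposition~\ref{prop-edges}, then chain the three $\simac$ relations of Proposition~\ref{prop-edgeasym} to obtain the sandwich \eqref{eq-edges}. One minor inaccuracy worth fixing: $\Pr[A(x,k,y,\ell)]$ does not literally depend only on $k$ and $\ell$, but part~(i) of Proposition~\ref{prop-edgeasym} gives a bound of that form uniformly in $(x,y)$, which is all you actually use when grouping by $(k,\ell)$.
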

\begin{proof}
	Given Proposition~\ref{prop-edgeasym}, for any $\epsilon>0$ there are some $c, d$ and $N$ such that for all $n \geq N$ we have \begin{align*}\Pr[A(x,k,y,\ell)] &\leq (1+\epsilon) \frac1{16 \pi} k^{-3/2} \ell^{-3/2}\\
	g(k,\ell) &\leq (1+\epsilon) 4n \min\{k,\ell\}\\
	\frac{n}{4\pi} \sum_{d \log n \leq k,\ell \leq cn} k^{-3/2} \ell^{-3/2} \cdot \min\{k,\ell\}  &\leq (1+\epsilon) \frac1{\pi} n \log n.\end{align*}  This implies
	
	\begin{align*}
	\sum_{(x,k,y,\ell)} \Pr[A(x,k,y,\ell)] &\leq (1+\epsilon) \sum_{(x,k,y,\ell)} \frac1{16 \pi} k^{-3/2} \ell^{-3/2} \\
	&= (1+\epsilon) \sum_{k,\ell} g(k,\ell) \cdot \frac1{16 \pi} k^{-3/2} \ell^{-3/2} \\
	&\leq (1+\epsilon)^2 \frac{n}{4\pi} \sum_{k,\ell} k^{-3/2} \ell^{-3/2} \min\{k,\ell\} \leq (1+\epsilon)^3 \frac1{\pi} n \log n,
	\end{align*}
	and similarly for the lower bound.
\end{proof}

We now prove this proposition.
\begin{proof}[Proof of Proposition~\ref{prop-edgeasym}]

\begin{itemize}
\item[(i)] It is clear that with probability $2^{-4}$ all of $x$, $x+k$, $y$, $y+\ell$ have the correct color. We now claim that, conditioning on the event that this happens, with probability $2^{-2}$ there is an even number of red points between $x$ and $x+k$ and an even number of blue points between $y$ and $y+\ell$. Indeed, consider the case where $x < y < x+k < y+\ell$.  Then for any possible coloring of $x+2, \ldots, y-1,y+1,\ldots,x+k-1,x+k+1,\ldots,y+\ell-2$ there is a unique choice of colors for $x+1$ and $y+\ell-1$ that makes the number of red and blue points in the respective regions even, and with probability $2^{-2}$ these points will receive this color (here we used our assumption that $y \geq x+2$ and $y+\ell \geq x+k+2$).

 Condition on the event that all of this happens.  Let $r_1$ and $r_2$ be defined such that there are $2r_1$ red dots between $x$ and $x+k$ and $2r_2$ red dots outside, and similarly define $b_1$ and $b_2$. Then, conditional on the aforementioned event, the probability of having arcs between $x$ and $x+k$ and $y$ and $y+\ell$ is given by
\[
\frac{C_{r_1} \cdot C_{r_2}}{C_{r_1+r_2+1}} \cdot \frac{C_{b_1} \cdot C_{b_2}}{C_{b_1+b_2+1}}.
\]

By Lemma~\ref{L-Chernoff}, with probability at least $1-8n^{-10}$ we have $r_1 \simac k/4$, $r_2 \simac n/2 - k/4$, $b_1 \simac \ell/4$ and $b_2 \simac n/2 - \ell/4$. Furthermore, since $k,\ell \geq d \log n$ and $d \log n \rightarrow \infty$ we may replace all Catalan numbers by their asymptotic expressions, which yields that the probability of having arcs on the desired positions is (asymptotically arbitrary closely) given by
\[
\frac{1}{16\pi} \cdot \left(\frac{r_1+r_2+1}{r_2}\right)^{3/2} r_1^{-3/2} \cdot \left(\frac{b_1+b_2+1}{b_2}\right)^{3/2} b_1^{-3/2}.
\]
Since $r_1+r_2+1 \simac n/2-k/4 + k/4 + 1 \simac n/2$ and $r_2 \simac n/2-k/4 \simac n/2$ (the latter since $n/2 \geq n/2-k/4 \geq n/2 - cn/4$), we find $\frac{r_1+r_2+1}{r_1} \simac 1$, and hence
\begin{align*}
\frac{1}{16\pi} \cdot \left(\frac{r_1+r_2+1}{r_2}\right)^{3/2} r_1^{-3/2} \cdot \left(\frac{b_1+b_2+1}{b_2}\right)^{3/2} b_1^{-3/2} &\simac \frac1{16 \pi} (k/4)^{-3/2} (\ell/4)^{-3/2}\\ &= 2^6 \frac1{16 \pi} k^{-3/2} \ell^{-3/2}.
\end{align*}
Therefore, for any $\epsilon$, and suitable $c$, $d$ and large enough $n$ we have
\begin{align*}
(1-8n^{-10})(1-\epsilon) \frac1{16 \pi} k^{-3/2} \ell^{-3/2} &\leq \Pr[A(x,k,y,\ell)] \\ &\leq (1-8n^{-10})(1+\epsilon) \frac1{16 \pi} k^{-3/2} \ell^{-3/2} + 8 n^{-10}.
\end{align*}
Since $1-8n^{-10} \rightarrow 1$ for $n \rightarrow \infty$, and since $k^{-3/2} \ell^{-3/2} \geq n^{-3}$ we have $n^{-10} = o(k^{-3/2} \ell^{-3/2})$ (uniformly in $n$). Hence this shows that $\Pr[A(x,k,y,\ell)] \simac \frac1{16\pi} k^{-3/2} \ell^{-3/2}$.
\item[(ii)] Without loss of generality we may assume that $k \leq \ell$. We show that $(4-6c)n(k-3) \leq g(k,\ell) \leq 4nk$. Since $k \geq d \log n$ and $d \log n \rightarrow \infty$ we have $k-3 \simac k$, and the result follows.

For the upper bound, note that we have at most $2n$ choices for $x$. Furthermore, given $x$, either $y$ or $y+\ell$ must be among $\{x+1,x+2,\ldots,x+k-1\}$, hence we have at most $2 \cdot (k-1) \leq 2k$ choices for $y$ afterwards. Therefore, $g(k,\ell) \leq 2n \cdot 2k = 4nk$.

For the upper bound, let $cn \leq x \leq (2-2c)n$. We claim that for any such $x$ there are at least $2(k-3)$ good quadruples with that $x$. Indeed, let $y \in \{x+2,\ldots,x+k-2\}$ or $y \in \{x+2-\ell,\ldots,x+k-2-\ell\}$, then we claim that any such $y$ satisfies. Since $\ell \geq k$ these two sets are disjoint, giving us $2(k-3)$ good quadruples.

First suppose that $y = x+j$ for $2 \leq j \leq k-2$. Then we clearly have $1 \leq x < y < x+k < y+\ell$, $y \geq x+2$ and $x+k \geq y+2$. Furthermore, $y+\ell \geq x+2+\ell \geq x+2+k = (x+k)+2$. Lastly, $y+\ell \leq x+k-2+\ell \leq 2n - 2cn + k+ \ell \leq 2n$, since $k,\ell \leq cn$. A similar argument holds in the case $y = x+j-\ell$.
\item[(iii)] We consider the contribution to the sum coming from $k < \ell$, the analysis for the contribution coming from $k \geq \ell$ is analogous. First, note that
\begin{align*}
\sum_{k < \ell} k^{-1/2} \ell^{-3/2} &= \sum_{\ell = d \log n}^{cn} \ell^{-3/2} \sum_{k  = d \log n}^{\ell-1} k^{-1/2} \leq \sum_{\ell} \ell^{-3/2} \int_1^\ell x^{-1/2} \mathrm{d}x \\ &=\sum_{\ell} \ell^{-3/2} (2 \ell^{1/2} - 2)
\leq \sum_{\ell=d \log n}^{cn} 2 \ell^{-1} \leq 2 \int_{d \log n-1}^{cn} x^{-1} \mathrm{d}x\\  &\leq 2 \log(cn) \leq 2 \log n.
\end{align*}
In the other direction, note that we have a lower bound of
\begin{align*}
\sum_{\ell = (\log n)^2}^{cn} \ell^{-3/2} \sum_{k = d \log n}^{\ell-1} k^{-1/2} &\geq \sum_{\ell = (\log n)^2}^{cn} \ell^{-3/2} \ell^{-3/2} \int_{d \log n}^{\ell} x^{-1/2} \mathrm{d}x\\  &= \sum_{\ell = (\log n)^2}^{cn} \ell^{-3/2} (2 \ell^{1/2} - 2 (d \log n)^{1/2}).
\end{align*}
For any $\epsilon$ we have $(d \log n) \leq \epsilon^2 (\log n)^2 \leq \epsilon^2 \ell^2$ for $n$ large enough, hence $2 \ell^{1/2} - 2 (d \log n)^{1/2} \geq 2(1-\epsilon) \ell^{-1/2}$ for $n$ large enough. Therefore, we get a lower bound of
\[
2(1-\epsilon) \sum_{\ell = \log(n)^2}^{cn} \ell^{-1} \geq 2(1-\epsilon) \left( \log(cn+1) - \log((\log n)^2) \right)
\]
by again comparing the sum with an integral. The desired result now follows from the fact that
\[
\log(cn+1) - \log(\log(n)^2) \geq \log n + \log c - \log(\log(n)^2) \sim \log n,
\]
hence we have $\log(cn+1) - \log(\log(n)^2) \geq (1-\epsilon) \log n$ for $n$ large enough. \qedhere
\end{itemize}
\end{proof}

\section{The number of isolated vertices} \label{sec-isolated}
In this section we will determine the asymptotic behavior of the number of isolated vertices, as stated in Theorem~\ref{T-isolated}. Recall that $I_n$ denotes the number of isolated vertices of $CP_n$ and that we defined \[
\gamma = 4 \sum_{m=1}^{\infty} 16^{-m} \sum_{b=0}^{m-1} \binom{2m-2}{2b} C_{m-1-b} C_b.
\]

Before proving Theorem~\ref{T-isolated}, let us first show why the sum defining $\gamma$ is a convergent sum. Let $\gamma_m = 4 \cdot 16^{-m} \sum_{b=0}^{m-1} \binom{2m-2}{2b} C_{m-1-b} C_b$, then as noted in \cite[Section 5]{Catalan-pair} we have $\gamma_m \leq \frac1{4(m-1)^2}$ for $m \geq 2$, from which the convergence follows since the sum of the reciprocals of the squares converges. In fact, this gives us an error bound on how quickly the finite sums $\sum_{m=1}^M \gamma_m$ converge to $\gamma$. Indeed
\begin{align*}
\gamma = \sum_{m=1}^{\infty} \gamma_m &= \sum_{m=1}^M \gamma_m + \sum_{m=M+1}^{\infty} \gamma_m \leq \sum_{m=1}^M \gamma_m + \sum_{m=M+1}^{\infty} \frac1{4(m-1)^2} \\
    &\leq \sum_{m=1}^M \gamma_m + \int_{x = m}^{\infty} \frac1{4(x-1)^2} \mathrm{d}x = \sum_{m=1}^M \gamma_m + \frac1{4(M-1)}.
\end{align*}
Using the trivial lower bound $\gamma \geq \sum_{m=1}^M \gamma_m$ and taking $M = 10^4$ one can compute that
\[
0.30234 \leq \gamma \leq 0.30238.
\]

We first show that $\E[I_n]$ is asymptotically at least $\gamma n$. As a first observation we note that any arc yielding an isolated vertex must have an even number of points between its endpoints, as otherwise there would be an arc connecting a point between its endpoints with a point outside. Such an arc would necessarily be on the other side and would yield an edge involving the arc in question. Therefore, $I_n = \sum_{m=1}^n I_{n,m}$ where $I_{n,m}$ is the number of isolated vertices induced by an arc connecting two points with $2m-2$ points between them.

The following result will suffice to prove the lower bound for $\E[I_n]$.

\begin{prop} \label{prop-isolated-m}
For $m$ a fixed positive integer we have $\E[I_{n,m}] \sim \gamma_m n$.
\end{prop}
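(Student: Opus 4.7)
The plan is to combine linearity of expectation with color symmetry and a conditioning on the coloring. For each $1 \le a \le 2n-2m+1$, let $J_a^{\mathrm{red}}$ (resp.\ $J_a^{\mathrm{blue}}$) be the indicator that the top (resp.\ bottom) Catalan-arc matching contains an arc between positions $a$ and $a+2m-1$ which represents an isolated vertex in $CP_n$. Then $I_{n,m} = \sum_a (J_a^{\mathrm{red}}+J_a^{\mathrm{blue}})$, and the symmetry of our model under swapping the two colors gives $\E[J_a^{\mathrm{red}}] = \E[J_a^{\mathrm{blue}}]$. Hence it suffices to show $\E[J_a^{\mathrm{red}}] \to \gamma_m/4$ uniformly in $a$; summing over the $2n-2m+1 \sim 2n$ choices of $a$ and doubling then yields $\E[I_{n,m}] \sim \gamma_m n$.

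To compute $\E[J_a^{\mathrm{red}}]$, I condition on the coloring and assume that $a$ and $a+2m-1$ are both red, that exactly $2b$ of the $2m-2$ interior points are blue, and that the total number of red points equals $2k$. Given the coloring, the red and blue Catalan matchings are chosen independently. Applying Lemma~\ref{L-CatProb} to the red side with the valid pair $(\b{x},\b{k})=((a),(m))$, the conditional probability that the red matching contains the arc from $a$ to $a+2m-1$ equals $C_{m-1-b}C_{k-m+b}/C_k$. Given the presence of this red arc, it represents an isolated vertex precisely when no blue arc interlaces with it, which is equivalent to requiring all $2b$ interior blue points to be matched to other interior blue points by the blue matching; a short counting argument (interior and exterior blue arcs cannot cross since all interior endpoints lie in a common interval) shows this probability equals $C_b C_{n-k-b}/C_{n-k}$.

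Next, I take expectation over the coloring. By the concentration estimate \eqref{eq-Conc}, one may restrict to $|k-n/2| \le n^{3/4}$ at an exponentially small cost. For such $k$, the Catalan asymptotic \eqref{eq-CatAsy} gives $C_{k-m+b}/C_k \to 4^{-m+b}$ and $C_{n-k-b}/C_{n-k} \to 4^{-b}$ uniformly in $k$, so the product of the two Catalan ratios converges to the $k$-independent constant $4^{-m}C_{m-1-b}C_b$. The probability that $a$ and $a+2m-1$ are both red with precisely $2b$ interior blue points converges to $\frac{1}{4}\binom{2m-2}{2b}2^{-(2m-2)}$, up to an $o(1)$ correction coming from the parity constraint on the total red count. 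Combining and summing over $0 \le b \le m-1$ gives
\[
\E[J_a^{\mathrm{red}}] \to 16^{-m}\sum_{b=0}^{m-1} \binom{2m-2}{2b} C_{m-1-b} C_b = \frac{\gamma_m}{4}.
\]

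The main technical step is ensuring that the Catalan ratio asymptotics hold uniformly in $k$ over the concentration window, and absorbing the small parity-correction terms arising from the model's requirement that the total number of red points be even; these are routine estimates. No delicate boundary analysis is needed because $m$ is fixed while $n \to \infty$, so every candidate arc has the usual interior/exterior structure.
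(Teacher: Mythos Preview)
Your argument is correct and follows essentially the same route as the paper: reduce by color symmetry to red arcs, condition on the coloring of the $2m$ points around position $a$, apply Lemma~\ref{L-CatProb} together with the Catalan asymptotics~\eqref{eq-CatAsy}, and use concentration~\eqref{eq-Conc} to localize $k$ near $n/2$. The only cosmetic difference is that the paper fixes the entire interior configuration (specific colors \emph{and} specific arc placements) and sums over those, whereas you condition only on the count $b$ and apply Lemma~\ref{L-CatProb} separately to each side; one small notational slip is that the valid pair you feed into Lemma~\ref{L-CatProb} should be expressed in the relabeled red coordinates (with length parameter $m-b$) rather than as $((a),(m))$ in the ambient coordinates, but your resulting formula $C_{m-1-b}C_{k-m+b}/C_k$ is correct.
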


As a result of this proposition, we can see that
\[
\E[I_n] \geq \sum_{m=1}^M \E[I_{n,m}] \sim \sum_{m=1}^M \gamma_m n,
\]
which gets arbitrarily close (in the multiplicative sense) to $\gamma n$ by picking $M$ large enough. However, this approach does not immediately yield the upper bound, since each $E[I_{n,m}]$ will converge to $\gamma_m n$ at its own rate, hence a bit more care is needed to handle the full sum $\E[I_n] = \sum_{m=1}^n \E[I_{n,m}]$.

\begin{proof}[Proof of Proposition~\ref{prop-isolated-m}]
We count the expected number of such arcs that come from the top, and by symmetry we can multiply this quantity by two to get our final answer.  As mentioned above, an arc connecting $x$ and $x+2m-1$ is isolated if and only if the $2m-2$ intermediate points are only connected to themselves. The total number of ways to connect those points is given by
\[
\sum_{b=0}^{m-1} \binom{2m-2}{2b} C_{m-1-b} C_b,
\]
where $b$ is the number of arcs on the bottom, $\binom{2m-2}{2b}$ counts the number of ways to select the $2b$ points for these arcs, and $C_{m-1-b}$ and $C_b$ count the number of ways to choose the arcs on the top and the bottom.

Now fix one such configuration with $b$ arcs on the bottom and $a$ arcs on top (including the arc between $x$ and $x+2m-1$). We claim that the expected number of such configurations in $CP_n$ is given by
\[
(2n-2m+1) 2^{-2m} \sum_{r=0}^{n-m} p_r \frac{C_r}{C_{r+a}} \cdot \frac{C_{n-m-r}}{C_{n-m-r+b}},
\]
where $p_r = p_r(n,a,b)$ is the probability that $2r$ of the points not among the $2m$ specified points are colored red.

This formula follows from the fact that there are $2n-2m+1$ possibilities for $x$, namely $1 \leq x \leq 2n-2m+1$, and that for each such $x$ the probability of the points $x, x+1, \ldots, x+2m-1$ colored exactly as in our configuration is given by $2^{-2m}$. After that, given $x$ and conditioning on these points having the correct colors and conditioning on there being $2r$ other red points, the probability that the top Catalan-arc matching (which has size $r+a$) has exactly the desired configuration on our given $2a$ red points is exactly $\frac{C_r}{C_{r+a}}$ by Lemma~\ref{L-CatProb}, and a similar result holds for the probability of the bottom Catalan-arc matching coinciding with our given configuration on the $2b$ points.

To complete the proof it suffices to show that
\[
\sum_{r=0}^{n-m} p_r \frac{C_r}{C_{r+a}} \cdot \frac{C_{n-m-r}}{C_{n-m-r+b}} \sim 4^{-m},
\]
since then
\[
\E[I_{n,m}] \sim 2 \left(\sum_{b=0}^{m-1} \binom{2m-2}{2b} C_{m-1-b} C_b\right) (2n-2m+1) 2^{-2m} \cdot 4^{-m} \sim \gamma_m n.
\]
Using \eqref{eq-Conc} with exponential small probability we have $r \leq n/4$ or $n-m-r \leq n/4$. As a trivial lower bound we have
\[
\sum_{r=0}^{n-m} p_r \frac{C_r}{C_{r+a}} \cdot \frac{C_{n-m-r}}{C_{n-m-r+b}} \geq \sum_{r=n/4}^{n-m-n/4} p_r \frac{C_r}{C_{r+a}} \cdot \frac{C_{n-m-r}}{C_{n-m-r+b}}.
\]
Now in this region, since $r, r+a, n-m-r, n-m-r+b \geq n/4$ we can use the approximation for the Catalan numbers from \eqref{eq-CatAsy} and find the lower bound
\begin{align*}
\sum_{r=n/4}^{n-m-n/4} p_r \frac{C_r}{C_{r+a}} \cdot \frac{C_{n-m-r}}{C_{n-m-r+b}}
&\sim \sum_{r=n/4}^{n-m-n/4} p_r \frac{4^r}{4^{r+a}} \left(\frac{r+a}{r}\right)^{3/2} \cdot \frac{4^{n-m-r}}{4^{n-m-r+b}} \left(\frac{n-m-r}{n-m-r+b}\right)^{3/2} \\
&\sim \sum_{r=n/4}^{n-m-n/4} p_r 4^{-(a+b)} = 4^{-m} \sum_{r=n/4}^{n-m-n/4} p_r \sim 4^{-m},
\end{align*}
where the last step follows from the fact that $r < n/4$ or $r > n-m-n/4$ holds with exponentially small probability.

Similarly, we have
\begin{align*}
\sum_{r=0}^{n-m} p_r \frac{C_r}{C_{r+a}} \cdot \frac{C_{n-m-r}}{C_{n-m-r+b}} &\leq \sum_{r=n/4}^{n-m-n/4} p_r \frac{C_r}{C_{r+a}} \cdot \frac{C_{n-m-r}}{C_{n-m-r+b}} + \Pr(r \leq n/4 \textup{ or } n-m-r \leq n/4)\\
    &\sim 4^{-m} + \Pr(r \leq n/4 \textup{ or } n-m-r \leq n/4) \sim 4^{-m},
\end{align*}
completing the proof.
\end{proof}

We now prove the desired asymptotics for the number of isolated vertices.

\begin{prop}\label{P-isolatedasy}
Let $\gamma$ be the constant defined by
\[
\gamma = 4 \sum_{m=1}^{\infty} 16^{-m} \sum_{b=0}^{m-1} \binom{2m-2}{2b} C_{m-1-b} C_b  = 0.3023\ldots.
\]
Let $I_n$ denote the number of isolated vertices of $CP_n$.  Then $\E[I_n] \sim \gamma n$.
\end{prop}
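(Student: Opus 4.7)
The plan is to write $\E[I_n] = \sum_{m=1}^{n} \E[I_{n,m}]$ and establish matching $\gamma n$ upper and lower bounds by splitting the sum into three regimes. Fix $\epsilon > 0$. For the lower bound, choose $M$ independent of $n$ with $\sum_{m=1}^{M} \gamma_m \geq \gamma - \epsilon$; then Proposition~\ref{prop-isolated-m} applied to each of the finitely many $m \leq M$ yields $\sum_{m=1}^{M} \E[I_{n,m}] \sim n\sum_{m=1}^{M} \gamma_m$, so $\liminf \E[I_n]/n \geq \gamma - \epsilon$, and taking $\epsilon \to 0$ gives $\liminf \E[I_n]/n \geq \gamma$.

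For the upper bound, fix a small absolute constant $c > 0$ (say $c = 1/12$) and split
\[
\E[I_n] = \sum_{m=1}^{M} \E[I_{n,m}] + \sum_{m=M+1}^{\lfloor cn \rfloor} \E[I_{n,m}] + \sum_{m > cn} \E[I_{n,m}].
\]
The first piece converges to $n\sum_{m=1}^{M} \gamma_m \leq n\gamma$ by Proposition~\ref{prop-isolated-m}. The crucial step is the uniform bound $\E[I_{n,m}] \leq K n \gamma_m$ for some absolute constant $K$ and every $m \leq cn$, which makes the middle piece at most $Kn\sum_{m > M} \gamma_m \leq K\epsilon n$ by the convergence of the series defining $\gamma$. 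To prove this uniform bound, I start from the expression
\[
\E[I_{n,m}] = 2(2n-2m+1)\,2^{-2m} \sum_{b=0}^{m-1} \binom{2m-2}{2b} C_{m-1-b} C_b \sum_{r=0}^{n-m} p_r \frac{C_r C_{n-m-r}}{C_{r+a} C_{n-m-r+b}}
\]
(with $a = m-b$) extracted from the proof of Proposition~\ref{prop-isolated-m}, and show that the innermost sum is $O(4^{-m})$ uniformly in $b$ and in $m \leq cn$. The main ingredient is the one-sided Catalan ratio estimate
\[
\frac{C_r}{C_{r+a}} \leq K' \cdot 4^{-a}\bigl(1 + a/r\bigr)^{3/2} \qquad (r \geq 1),
\]
which is immediate from \eqref{eq-aA}. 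Combining this with the concentration inequality \eqref{eq-Conc}---which forces $r \in [N/4, 3N/4]$ with $N = n-m$ except on an event of probability at most $2e^{-N/4}$---and with the bound $a/r \leq 4c/(1-c)$ valid throughout this range, we obtain
\[
\frac{C_r C_{n-m-r}}{C_{r+a} C_{n-m-r+b}} \leq K'' \cdot 4^{-m}
\]
on the event $\{r \in [N/4, 3N/4]\}$, while the complementary event contributes at most $2e^{-N/4}$ to the inner sum. The constant $c = 1/12$ satisfies $(1-c)/4 > c\log 4$, so $e^{-N/4} \leq e^{-(1-c)n/4} = o(4^{-cn}) = o(4^{-m})$ uniformly in $m \leq cn$, yielding the desired $O(4^{-m})$ estimate and hence $\E[I_{n,m}] \leq Kn\gamma_m$.

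The final tail is handled by Lemma~\ref{L-Range}: every isolated vertex contributing to $\sum_{m > cn} I_{n,m}$ comes from a matching arc whose endpoints are at distance $2m-1 > 2cn - 1$, so $\sum_{m > cn} I_{n,m} \leq A_{2cn, 2n}$. For $n$ large enough so that $2cn \geq 32 \log n$, Lemma~\ref{L-Range} gives $\E[A_{2cn, 2n}] = O(\sqrt{n/c}) = o(n)$. Combining the three pieces gives $\E[I_n] \leq n\gamma + K\epsilon n + o(n)$; letting $n \to \infty$ and then $\epsilon \to 0$ yields $\limsup \E[I_n]/n \leq \gamma$, matching the lower bound.

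The main obstacle is the uniform bound $\E[I_{n,m}] \leq Kn\gamma_m$ valid throughout the entire range $m \leq cn$ rather than merely for each fixed $m$: the $(1+a/r)^{3/2}$ factor in the Catalan ratio estimate blows up when $a$ becomes large compared to $r$, which is precisely what prevents us from pushing the middle regime all the way to $m \leq n-1$ and forces the complementary treatment of truly long arcs via Lemma~\ref{L-Range}.
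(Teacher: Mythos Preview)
Your proof is correct and follows the same overall architecture as the paper's---the lower bound via Proposition~\ref{prop-isolated-m} on a fixed truncation, the Catalan-ratio estimate combined with \eqref{eq-Conc} for the bulk, and Lemma~\ref{L-Range} for the long-arc tail---but the decomposition is organized differently. The paper splits the upper bound into only two regimes, with the cutoff at $m=16\log n$: because $a\le 16\log n$ while $r\ge n/10$ on the concentrated event, the factor $(1+a/r)^{3/2}$ tends to $1$ uniformly, so the inner sum is asymptotically at most $4^{-m}+c^{-n}$ rather than merely $O(4^{-m})$; this sharper estimate lets the paper sum directly over all $m\le 16\log n$ and obtain $\gamma n+o(1)$ without a separate fixed-$M$ truncation. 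Your linear cutoff $m\le cn$ only gives a bounded (not asymptotically unit) correction factor, so you compensate with the extra layer $m\le M$ and the tail of the convergent series $\sum\gamma_m$. Both routes work; yours is slightly more robust in that it never needs $(1+a/r)\to 1$, at the cost of the additional $\epsilon$-$M$ bookkeeping, while the paper's logarithmic cutoff yields a cleaner single estimate for the whole bulk.
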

\begin{proof}
As mentioned after the statement of Proposition~\ref{prop-isolated-m} we have shown an asymptotic lower bound of $\gamma n$ on the number of isolated vertices.  For the upper bound, note that using the notation of Lemma~\ref{L-Range} we have that $I_{n,m}\le A_{2m-1,2m-1}$, since the number of isolated vertices coming from arcs of length $2m-1$ is clearly at most the the total number of arcs of this length. By this observation, the fact that $\sum_{m=16 \log n+1}^n A_{2m-1,2m-1} \leq A_{32 \log n+1,2n}$, and Lemma~\ref{L-Range}, we have
\[
\sum_{m=16\log n+1}^{n} \E[I_{n,m}]\le \E[A_{32\log n+1,2n}]=o(n),
\]
which shows that
\[
\E[I_n] =
\sum_{m=1}^{16 \log n} \E[I_{n,m}] + o(n).
\]
Using the argument from Proposition~\ref{prop-isolated-m} we see that
\[
\sum_{m=1}^{16 \log n} \E[I_{n,m}] \leq 4n \sum_{m=1}^{16 \log n} 4^{-m} \sum_{b=0}^{m-1} \binom{2m-2}{2b}C_{m-1-b}C_b \sum_{r=0}^{n-m} p_r(n,a,b) \frac{C_r}{C_{r+a}} \cdot \frac{C_{n-m-r}}{C_{n-m-r+b}}.
\]
We now see that for any $m$, $a$ and $b$ we have that there are at least $n$ points outside of the configuration, hence $2r$ is the sum of at least $n$ independent $0-1$ Bernoulli $p=1/2$ variables. This means that with at most some exponentially small probability $c^{-n}$ we have $r, n-m-r \leq n/10$.

Therefore, for all cases where $r, n-m-r \geq n/10$ we can again (uniformly over all summands) replace $\frac{C_r}{C_{r+a}}$ by $4^{-a} \left(\frac{r+a}{r}\right)^{3/2}$. Since $\frac{r+a}{r} = 1 + \frac{a}{r} \leq 1 + \frac{16 \log n}{n/10}$ we can asymptotically replace $\frac{r+a}{r}$ by $1$ over all summands. Using this and the approach as in Proposition~\ref{prop-isolated-m} we have an asymptotic upper bound $\sum_{r=0}^{n-m} p_r(n,a,b) \frac{C_r}{C_{r+a}} \cdot \frac{C_{n-m-r}}{C_{n-m-r+b}} \leq 4^{-m} + c^{-n}$, hence (asymptotically up to arbitrarily small multiplicative factors) we have
\begin{align*}
\sum_{m=1}^{16 \log n} \E[I_{n,m}] &\leq 4n \sum_{m=1}^{16 \log n} 4^{-m} \sum_{b=0}^{m-1} \binom{2m-2}{2b}C_{m-1-b}C_b \left(4^{-m} + c^{-n}\right) \\
 &\leq \gamma n + 4n \left(\sum_{m=1}^{16 \log n} 4^{-m} \sum_{b=0}^{m-1} \binom{2m-2}{2b} C_{m-1-b} C_b \right) c^{-n} \\
 &\leq \gamma n + 4n \left(\sum_{m=1}^{16 \log n} 4^{-m} 16^m\right) c^{-n} \leq \gamma n + 4n c^{-n} \sum_{m=1}^{16 \log n} 4^m \\
 &\leq \gamma n + 4n c^{-n} \cdot 16 \log n 4^{16 \log n} = \gamma n + 64 n c^{-n} \cdot \log n \cdot n^{16 \log 4} = \gamma n + o(1),
\end{align*}
since $c^{-n}$ goes to zero faster than $n^{1 + 16 \log 4} \log n$ grows to infinity.
\end{proof}

We can use a similar proof to bound the variance of $I_n$.

\begin{prop} \label{prop-isolated-variance}
The variance of the number of isolated vertices in $CP_n$ satisfies $\Var[I_n] = o(n^2)$.
\end{prop}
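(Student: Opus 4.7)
The plan is to establish $\E[I_n^2]=(\gamma n)^2+o(n^2)$, whence Proposition~\ref{P-isolatedasy} gives $\Var[I_n]=\E[I_n^2]-(\E[I_n])^2=o(n^2)$. Write $I_n=\sum_{x,m}X_{x,m}$, where $X_{x,m}$ is the indicator that the arc $(x,x+2m-1)$ is present in $CP_n$ and is isolated, so that
\[
I_n^2=I_n+\sum_{(x,m)\ne(x',m')}X_{x,m}X_{x',m'}.
\]
Since $\E[I_n]=O(n)$, the first term is $o(n^2)$, and the analysis reduces to the cross sum.

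The first step is the same truncation as in Proposition~\ref{P-isolatedasy}: by Lemma~\ref{L-Range} the expected number of arcs of length at least $32\log n$ is $o(n)$, so using the deterministic bound $I_n\le n$, pairs involving at least one long arc contribute at most $o(n)\cdot n=o(n^2)$. The remaining pairs involve two short arcs and are classified by the relative position of the intervals $[x,x+2m-1]$ and $[x',x'+2m'-1]$. Interlacing pairs contribute zero: arcs on the same side of the line cannot interlace, and interlacing arcs on opposite sides form an edge, preventing either vertex from being isolated. Pairs sharing an endpoint cannot both be realized, since each point belongs to at most one arc. Nested pairs (with both lengths at most $32\log n$) number at most $O(n(\log n)^2)$, and since $\E[X_{x,m}X_{x',m'}]\le\E[X_{x,m}]\le 1$, they contribute $o(n^2)$ in total. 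Only disjoint short pairs remain.

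For a disjoint pair, the argument of Proposition~\ref{prop-isolated-m} extends with one extra region. Fix internal configurations with $(a,b)$ and $(a',b')$ top and bottom sub-arcs in each interval, so that $a+b=m$ and $a'+b'=m'$, and choose a side (top or bottom) for each main arc. The prescribed coloring on the $2(m+m')$ marked points has probability $2^{-2(m+m')}$, and conditional on this and on there being $2r_1,2r_2,2r_3$ red and $2s_1,2s_2,2s_3$ blue points in the three gaps, Lemma~\ref{L-CatProb} gives the Catalan-matching probability
\[
\frac{C_{r_1}C_{r_2}C_{r_3}}{C_{r_1+r_2+r_3+a+a'}}\cdot\frac{C_{s_1}C_{s_2}C_{s_3}}{C_{s_1+s_2+s_3+b+b'}},
\]
with the obvious modification when both main arcs lie on the same side. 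By \eqref{eq-Conc} each of $r_i,s_j$ is at least $n/10$ outside an exponentially unlikely event, and on that event \eqref{eq-CatAsy} shows this ratio is $\sim 4^{-(a+a'+b+b')}=4^{-(m+m')}$, exactly as in Proposition~\ref{prop-isolated-m}. Summing over the $4n^2(1+o(1))$ choices of disjoint positions, over top/bottom assignments, and over internal configurations yields $\gamma_m\gamma_{m'}n^2(1+o(1))$; then summing over $m,m'\le 16\log n$ and using the tail bound $\gamma-\sum_{m=1}^M\gamma_m\le (4(M-1))^{-1}$ derived after Theorem~\ref{T-isolated} gives $\gamma^2 n^2+o(n^2)$.

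The main obstacle is bookkeeping rather than a new idea: the single-arc Stirling manipulation of Catalan ratios must be run with three red and three blue gap sizes, and the sum now ranges over pairs $(a,b;a',b')$ of internal configurations together with top/bottom assignments for the two main arcs. The geometric classification conveniently ensures that genuine asymptotics are required only in the disjoint case, while the nested, interlacing, and endpoint-sharing cases are disposed of by counting alone.
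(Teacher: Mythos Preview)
Your overall strategy is exactly the paper's: truncate to short arcs via Lemma~\ref{L-Range}, dispose of interlacing, coincident, and nested pairs by counting, and then for disjoint short pairs rerun the single-arc computation of Proposition~\ref{prop-isolated-m} to get $\gamma_m\gamma_{m'}n^2$, summing to $(\gamma n)^2+o(n^2)$.

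However, your displayed Catalan formula for the disjoint case is wrong. By Lemma~\ref{L-CatProb}, the region $M_0$ consisting of points outside all specified arcs is a \emph{single} region, regardless of how many connected components it has on the line; so with $2(r_1+r_2+r_3)$ red points in the three gaps the red factor is
\[
\frac{C_{r_1+r_2+r_3}}{C_{r_1+r_2+r_3+a+a'}},
\]
not $C_{r_1}C_{r_2}C_{r_3}/C_{r_1+r_2+r_3+a+a'}$, and similarly for blue. Your product formula would be asymptotic to a quantity of order $4^{-(m+m')}n^{-3}$, not $4^{-(m+m')}$, so taken literally it does not give the claimed $\gamma_m\gamma_{m'}n^2$. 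Relatedly, your concentration assertion that each individual $r_i,s_j$ exceeds $n/10$ is false (the middle gap can have zero points) and unnecessary: with the correct formula you only need $r_1+r_2+r_3\ge n/10$ and $s_1+s_2+s_3\ge n/10$, which does hold with exponentially high probability since there are at least $2n-2(m+m')\ge n$ points outside the two intervals. Once this is fixed, the conclusion $\sim 4^{-(m+m')}$ and the remainder of your argument are correct. As a minor simplification, note that the paper only proves the upper bound $\E[I_n^2]\le(\gamma n)^2+o(n^2)$; the matching lower bound is immediate from $\Var[I_n]\ge 0$ and Proposition~\ref{P-isolatedasy}, so you need not argue the full asymptotic equality for the disjoint pairs.
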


Before giving this proof, let us point out that using Chebyshev's inequality we can use this result to complete the proof of Theorem~\ref{T-isolated}.
\begin{proof}[Proof of Theorem~\ref{T-isolated}]
The asymptotic result for the expected number of isolated vertices follows from Proposition~\ref{P-isolatedasy}.  From this we know that $|\E[I_n] - \gamma n| < \epsilon/2 \cdot n$ for $n$ large enough. Hence, for sufficiently large $n$ we have,
\[
\Pr[|I_n - \gamma_n| > \epsilon n] \leq \Pr[|I_n - \E[I_n]| > \epsilon/2 \cdot n].
\]
Now, applying Chebyshev's inequality we find
\[
\Pr[|I_n - \E[I_n]| > \epsilon/2 \cdot n] \leq \frac{\Var[I_n]}{(\epsilon/2 \cdot n)^2} = \frac{o(n^2)}{(\epsilon/2 \cdot n)^2} = o(1),
\]
as desired.
\end{proof}

We will now prove the result on the variance.
\begin{proof}[Proof of Proposition~\ref{prop-isolated-variance}]
By definition we have $\Var[I_n] = \E[I_n^2] - \E[I_n]^2$, where $\E[I_n]^2 = (\gamma n)^2 + o(n^2)$ by the first part of Theorem~\ref{T-isolated}. Therefore, since variance is nonnegative, it suffices to show that
\[
\E[I_n^2] \leq (\gamma n)^2 + o(n^2).
\]
Observe that $I_n^2$ is the number of ordered pairs of isolated vertices.

Just as above we show that we can restrict ourselves to the isolated vertices induced by arcs of length at most $32 \log n$. Indeed, let $A_{\al,\be}$ be as in Lemma~\ref{L-Range}.  Then the number of pairs where at least one vertex comes from an arc of length at least $32 \log n$ is at most $2 \cdot A_{32\log n,2n}\cdot n$, where the factor $2$ represents the choice of the vertex coming from a long arc being the first or second vertex in the pair, $A_{32\log n,2n}$ is the number of ways to pick this long arc, and $n$ is the number of ways to pick the remaining vertex. Therefore, this contribution to $\E[I_n^2]$ is at most $\E[2 \cdot A_{32\log n,2n}\cdot n] = o(n^2)$ by  Lemma~\ref{L-Range}.

Additionally, the number of pairs of isolated vertices coming from two arcs of length at most $32 \log n$, where one arc is contained in the other arc (possibly facing the other way) is deterministically at most $O(n \log n)$, since one can pick the outer arc in at most $n$ ways and then there are at most $32 \log n$ ways to pick the smaller arc. Therefore, these pairs contribute $o(n^2)$ to $\E[I_n^2]$ as well. Furthermore, the number of pairs where both arcs are the same are at most $n$, so these will also contribute $o(n^2)$ to $\E[I_n^2]$.

Therefore, we can restrict our attention to pairs of isolated vertices coming from different arcs of length at most $32 \log n$ such that neither arc is contained in the other. Note that since the arcs yield isolated vertices their endpoints cannot interlace, so the sets of points covered by this arc are disjoint.

Suppose we want to calculate the probability of having a pair of isolated vertices, one of them induced by an arc connecting $(x,x+2m-1)$ and the other connecting an arc connecting $(y,y+2k-1)$, where $m,k \leq 16 \log n$. By a similar argument as in Proposition~\ref{prop-isolated-m}, after specifying configurations for $\{x+1,\ldots,x+2m-2\}$ and $\{y+1,\ldots,y+2k-2\}$ the probability is (asymptotically up to arbitrarily small multiplicative factors) at most
\[
4^{-(m+k)} \cdot (4^{-(m+k)} + c^{-n}),
\]
where $4^{-(m+k)}$ is the probability that all of $\{x,x+1,\ldots,x+2m-1\}$ and $\{y,y+1,\ldots,y+2k-1\}$ receive the correct color, and $c^{-n}$ is once again an upper bound on the probability of \emph{not} having at least $n/10$ more blue and red points, and the $4^{-(m+k)}$ is once again the factor that shows up by considering the asymptotic behavior of the appropriate quotient of Catalan numbers. Also, by the same argument we can do these asymptotics for all possible $x$, $y$, $k$, $m$ and choice of configurations simultaneously.

Taking into account that there are at most $(2n)^2$ ways to choose $x$ and $y$, and $4$ ways to choose the side (top or bottom) for the arcs, and considering the possible configurations for $\{x+1,\ldots,x+2k-2\}$ and $\{y+1,\ldots,y+2k-2\}$ we find an asymptotic upper bound for the desired contribution of
\[
\sum_{k,m=1}^{16 \log n} 16 n^2 \left(\sum_{b_1=0}^{m-1} \binom{2m-2}{2b_1} C_{m-1-b_1} C_{b_1}\right)\left(\sum_{b_2=0}^{k-1} \binom{2k-2}{2b_2} C_{k-1-b_2}C_{b_2}\right) 4^{-(m+k)} \left(4^{-(m+k)} + c^{-n}\right).
\]
Using $4^{-(m+k)} + c^{-n} \leq (4^{-m} + c^{-n/2})(4^{-k}+c^{-n/2})$, we can separate the sums over $k$ and $m$.  Thus the contribution is at most
\[
\left(\sum_{m=1}^{16 \log n} 4n \cdot \sum_{b_1=0}^{m-1} \binom{2m-2}{2b_1} C_{m-1-b_1} C_{b_1} \cdot 4^{-m} (4^{-m} + c^{-n/2})\right)^2 \leq (\gamma n + o(1))^2 = (\gamma n)^2 + o(n^2),
\]
where the last inequality once again follows from the proof of Theorem~\ref{T-isolated}.
\end{proof}

We note that essentially the same proof can be used to show that $\E[I_n^m] \sim \gamma^m n^m$ for all $m \geq 2$.

\section{The variance of the number of edges}
\label{sec-edgevariance}
This section will be devoted to bounding the variance of the random variable $e(CP_n)$. We will prove the following result, which with a proof similar to that of Theorem~\ref{T-isolated} will imply the concentration result of Theorem~\ref{T-edges}.
\begin{prop} \label{prop-edgevariance}
    The variance of the number of edges in $CP_n$ satisfies
    \[
    \Var[e(CP_n)] = o(n^2 \log^2 n).
    \]
\end{prop}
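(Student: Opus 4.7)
The strategy mirrors the proof of Proposition~\ref{prop-isolated-variance}. Since $\Var[e(CP_n)] = \E[e(CP_n)^2] - \E[e(CP_n)]^2$ and Corollary~\ref{C-edges1} gives $\E[e(CP_n)]^2 = \tfrac{1}{\pi^2} n^2 \log^2 n + o(n^2 \log^2 n)$, it suffices to prove
\[
\E[e(CP_n)^2] \leq \tfrac{1}{\pi^2} n^2 \log^2 n + o(n^2 \log^2 n).
\]
Expanding $e(CP_n)^2$ as a count of ordered pairs of potential edges gives
\[
\E[e(CP_n)^2] = \sum_{(E_1,E_2)} \Pr[A(E_1) \cap A(E_2)],
\]
where each $E_i$ ranges over valid quadruples $(x_i,k_i,y_i,\ell_i)$. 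The plan is to isolate the ``typical'' pairs, show that their joint probabilities asymptotically factorize into the product of single-edge probabilities, and bound the remaining pairs by variants of the arguments in Proposition~\ref{prop-edges}.

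First, I would bound the contribution from four classes of exceptional pairs. The diagonal $E_1 = E_2$ contributes $\E[e(CP_n)] = O(n\log n) = o(n^2\log^2 n)$. Pairs sharing an arc are handled using the deterministic fact that an arc of length $k$ lies in at most $k$ edges, together with Lemma~\ref{L-Range} applied to the number of arcs of each length. Pairs in which at least one of the four arcs has length outside $[d\log n,\, 2n - d\log n]$ are pruned by running the arguments of cases (i)--(ii) of Proposition~\ref{prop-edges} with $s=t=2$ in Lemma~\ref{L-upperbound}. Finally, pairs that are ``not good''---some two of the 8 endpoints differ by at most one---are treated as in case (iii) of Proposition~\ref{prop-edges}: the coincidence constraint saves a factor of $n$ in the number of configurations, which outweighs the factor of $n$ coming from the resulting probability bound.

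The main contribution comes from good pairs in which all four arcs have length in $[d\log n, cn]$ and the 8 endpoints are pairwise well-separated. For these I would establish the asymptotic factorization
\[
\Pr[A(E_1) \cap A(E_2)] \simac \Pr[A(E_1)] \cdot \Pr[A(E_2)].
\]
The argument is a two-edge version of Proposition~\ref{prop-edgeasym}(i): condition on the 8 specified endpoints receiving the correct colors and on the parity of the colors in each of the intermediate regions, use \eqref{eq-Conc} to ensure that each of the at most three regions $M_i$ cut out by the two red arcs, and each of the at most three regions cut out by the two blue arcs, contains close to its expected number of points of the appropriate color, and then substitute \eqref{eq-CatAsy} into Lemma~\ref{L-CatProb}. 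The ``outside'' Catalan numbers on each side nearly cancel against $C_{R_n}$ and $C_{B_n}$, leaving precisely two copies of the single-edge expression $\tfrac{1}{16\pi} k_i^{-3/2} \ell_i^{-3/2}$. Granted the factorization, summing over good pairs gives, in the $\simac$ sense, the square of the sum in Proposition~\ref{prop-edgeasym}, which evaluates to $\tfrac{1}{\pi^2} n^2 \log^2 n \cdot (1 + o(1))$.

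The main obstacle is verifying the factorization uniformly across the possible nesting structures of the two red arcs (and independently of the two blue arcs). When the two arcs on one side are nested with nearly equal lengths, the middle region can be substantially smaller than $n$, and one must check that this region still contains enough points for the concentration step to apply; this is where the lower bound $d\log n$ on arc lengths is essential. The resulting case analysis, together with the careful verification that each class of exceptional pairs really sums to $o(n^2\log^2 n)$, is what motivates deferring a substantial portion of the proof to Appendix~\ref{appendix-edgevariance}.
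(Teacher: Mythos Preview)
Your overall strategy matches the paper's, but there is a genuine gap in the treatment of nested arcs. The asymptotic factorization $\Pr[A(E_1)\cap A(E_2)] \simac \Pr[A(E_1)]\,\Pr[A(E_2)]$ simply does not hold when the two arcs on one side are nested. If, say, $x_1 < x_2 < x_2+k_2 < x_1+k_1$, then by Lemma~\ref{L-CatProb} the red-side joint probability is governed by the gap sizes $k_2$, $k_1-k_2$, $2n-k_1$, so after the concentration step it behaves like a constant times $(k_1-k_2)^{-3/2}k_2^{-3/2}$, whereas the product of the marginals behaves like $k_1^{-3/2}k_2^{-3/2}$. The ratio is $\bigl(k_1/(k_1-k_2)\bigr)^{3/2}$, which is bounded away from $1$ whenever $k_2$ is a nontrivial fraction of $k_1$ and blows up when $k_1-k_2$ is small. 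Your pruning conditions do not prevent this: restricting the \emph{arc lengths} $k_i,\ell_i$ to $[d\log n,\,cn]$ and excluding coincident endpoints puts no lower bound on the annulus gap $k_1-k_2$, and even when $k_1-k_2$ is of order $k_1$ the factorization is off by a constant factor that you cannot afford in an upper bound meant to match $(1+o(1))\E[e(CP_n)]^2$.

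The paper's fix is to treat nested configurations as an additional exceptional class rather than fold them into the main term. It first restricts (Lemmas~\ref{L-smallnested}--\ref{L-smallunnested}) by the \emph{gap sizes} $f_i,g_j$ rather than arc lengths, and then proves separately (Lemma~\ref{L-variancenested}) that the total contribution from all nested good quadruples with $f_i,g_j\ge d\log n$ is $o(n^2\log^2 n)$; this is a nontrivial computation exploiting the bound of Lemma~\ref{L-upperbound} together with a configuration count of order $g(k_1,\ell_1)\cdot k_1\cdot \min\{k_m,\ell_m\}$. Only after excising nested arcs does the paper invoke the two-edge version of Proposition~\ref{prop-edgeasym}(i), and there the joint probability genuinely does satisfy \eqref{eq-edgevariance}. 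So the missing ingredient in your outline is precisely the lemma that the nested contribution is negligible; without it the factorization step is false as stated.
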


Similar to the case of isolated vertices, we will prove this statement by showing that for any $\epsilon > 0$ and $n$ large enough we have
\[
E[(e(CP_n))^2] \leq (1+\epsilon) \frac1{\pi^2} n^2 \log^2 n + o(n^2 \log^2 n).
\]
In other words, we want to count the expected number of pairs of edges in $CP_n$. Just as when we determined the expected number of edges, we first have to handle some exceptional cases and show that all of these cases contribute of order $o(n^2 \log^2 n)$. This requires a few more cases than before, and each of the proofs will be a bit longer since there are more things to take care of. Since the general approach of all of the proofs are similar to Proposition~\ref{prop-edges} and Proposition~\ref{prop-isolated-variance}, we will only state the lemmas here and defer the proofs to Appendix~\ref{appendix-edgevariance}.

As mentioned, $e(CP_n)^2$ is the number of pairs of edges in $CP_n$. Typically, such a pair of edges will be induced by four arcs in the representative for $CP_n$. The first step will be to show that these pairs are indeed the main contribution to $E[(e(CP_n))^2]$.

\begin{lem} \label{L-variancefewarcs}
    The expected number of pairs of edges in $CP_n$ induced by at most three arcs in its representative is at most $o(n^2 \log^2 n)$.
\end{lem}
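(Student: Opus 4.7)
The plan is to split the ``induced by at most three arcs'' condition into the $=2$ arc and $=3$ arc subcases, and to handle each by a deterministic graph-theoretic observation combined with the already-established estimate $\E[e(CP_n)] = O(n\log n)$ from Corollary~\ref{C-edges1}. No further probabilistic input should be needed.

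First, suppose a pair of edges of $CP_n$ is induced by exactly two arcs of a representative. Since each edge of $CP_n$ is determined by a pair of interlacing arcs (one on each side of the line), and since two such arc-pairs share all of their arcs only when they are identical, the two edges in the pair must coincide. Thus the number of ordered such pairs is exactly $e(CP_n)$, whose expectation is $O(n\log n)=o(n^2\log^2 n)$.

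Next, suppose a pair of edges is induced by exactly three arcs. Then the two edges are distinct and, as $CP_n$ is bipartite, they share exactly one vertex of $CP_n$. The number of ordered such pairs equals $\sum_{v}d(v)(d(v)-1)$, where $d(v)$ denotes the degree of $v$ in $CP_n$. Using the trivial bound $d(v)\le n-1$ together with the handshake identity $\sum_v d(v)=2e(CP_n)$, we obtain the pointwise estimate
\[
\sum_{v}d(v)(d(v)-1)\le n\sum_{v}d(v)=2n\cdot e(CP_n).
\]
Taking expectations and invoking Corollary~\ref{C-edges1} yields $2n\cdot\E[e(CP_n)]=O(n^2\log n)=o(n^2\log^2 n)$. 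Combining the two cases gives the lemma.

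The natural worry is that the crude replacement of $d(v)$ by $n$ might be too wasteful, since one could imagine needing a much sharper handle on $\sum_v d(v)^2$ via Lemma~\ref{L-upperbound}. However, because the target bound is $o(n^2\log^2 n)$ while $\E[e(CP_n)]$ is only $\Theta(n\log n)$, we have a spare factor of $\log n$ to play with, and the naive degree argument comfortably fits inside that slack. Hence the proof requires no delicate case analysis, and the only place where the Catalan-pair structure enters is through Corollary~\ref{C-edges1}, which has already been established.
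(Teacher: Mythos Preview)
Your proof is correct and follows essentially the same approach as the paper's: both treat the two-arc case by observing the pair must be a repeated edge (counted by $e(CP_n)$), and both bound the three-arc case by $O(n\cdot e(CP_n))$, with the paper counting directly (pick an edge, then a third arc) and you equivalently using $\sum_v d(v)(d(v)-1)\le n\sum_v d(v)=2n\,e(CP_n)$.
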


Therefore, we can restrict to valid quadruples $q = (\b{x},\b{k},\b{y},\b{l}) = ((x_1,x_2),(k_1,k_2),(y_1,y_2),(\ell_1,\ell_2))$ where $(x_i,k_i,y_i,\ell_i)$ is a possible edge for $i = 1,2$. Our goal is now to show that
\[
\sum_q \Pr[A(\b{x},\b{k},\b{y},\b{l})] \leq (1+\epsilon) \frac1{\pi^2} n^2 \log^2 n + o(n^2 \log^2 n),
\]
where the sum is over all valid quadruples $q = (\b{x},\b{k},\b{y},\b{l})$. We use the notation for $f_0, f_1, f_2, g_0, g_1, g_2$ as in Section~\ref{sec-edges}. Similar to the proof for the expected number of edges, the first step will be to show that the main contribution comes from quadruples with $f_i, g_j \geq d \log n$. That is we will show that if $Q_1$ is the set of quadruples for which at least one of $f_i$, $g_j$ is less than $d \log n$, then
\[
\sum_{q \in Q_1} \Pr[A(\b{x},\b{k},\b{y},\b{l})] = o(n^2 \log^2 n).
\]
Without loss of generality we can consider the case where one of the $f_i$ is less than $d \log n$. Then the result follows from the two lemmas below, the first one of which deals with the case that the two arcs on top are nested, and the second one deals with the unnested case.

\begin{lem} \label{L-smallnested}
    Let $Q_{1,1}$ be the set of all valid quadruples $q$ for which $x_1 < x_2 < x_2 + k_2 < x_1 + k_1$ and for which $k_2, k_1 - k_2$ or $2n - k_1$ is less than $d \log n$. Then
    \[
    \sum_{q \in Q_{1,1}} \Pr[A(\b{x},\b{k},\b{y},\b{l})] = o(n^2 \log^2 n).
    \]
\end{lem}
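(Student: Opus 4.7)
The plan is to partition $Q_{1,1}$ into three subclasses based on which of the conditions $k_2 < d\log n$, $k_1 - k_2 < d\log n$, or $2n - k_1 < d\log n$ holds, and to bound the contribution of each separately by combining Lemma~\ref{L-upperbound} (with $s=t=2$) with a count of admissible configurations, much as in Proposition~\ref{prop-edges}. In each case one of the $f_i$'s is forced to be smaller than $d \log n$ and thus drops out of the product $\widetilde{\prod}$, so $\Pr[A(\b{x},\b{k},\b{y},\b{l})]$ is bounded by $\beta_{2,2} n^3$ times only four $(f \text{ or } g)^{-3/2}$ factors, which is where the extra slack for the desired $o(n^2 \log^2 n)$ bound comes from.

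In the subcase $2n - k_1 < d\log n$, the constraints $k_1 \in (2n-d\log n, 2n]$ and $x_1 \in [1, 2n-k_1]$ leave only $O(\log^2 n)$ pairs $(x_1, k_1)$, and $f_0 \le 2n - k_1$ drops out. Summing the probability bound over $(x_2, k_2)$ and the two bottom arcs via integral comparisons in the spirit of Proposition~\ref{prop-edges}(iv)--(v) then yields $o(n^2 \log^2 n)$. In the subcase $k_2 < d\log n$, the inner top arc is short, so $f_2$ drops out; I would follow the two-regime argument of Proposition~\ref{prop-edges}(i), treating $k_2 \le \sqrt{\log n}$ by the deterministic bound that such an arc has degree at most $k_2$ (and there are at most $n$ arcs in total), and $\sqrt{\log n} < k_2 < d\log n$ by Lemma~\ref{L-Range}, which gives that the expected number of such arcs is $o(n)$. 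In either regime, bounding the second edge $e_1$ via Lemma~\ref{L-upperbound} and summing its configurations by integrals gives $o(n^2 \log^2 n)$. The subcase $k_1 - k_2 < d\log n$ is analogous: after fixing $(x_1, k_1)$, the admissible $(x_2, k_2)$ satisfy $k_2 \in (k_1 - d\log n, k_1)$ and $x_2 \in (x_1, x_1 + k_1 - k_2)$, giving $O(\log^2 n)$ options; now $f_1$ drops out and the bottom sum is controlled as above.

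The main obstacle is the bookkeeping of the possible relative positions of the two bottom arcs $(y_1, y_1+\ell_1)$ and $(y_2, y_2+\ell_2)$ with respect to the top arcs and each other (nested, disjoint, or interleaved), since each sub-configuration changes the set of $g_j$'s and hence the shape of the probability bound. Fortunately, each such sub-configuration can be handled by integral estimates entirely analogous to those used in Section~\ref{sec-edges}, so verifying that the bottom sums are suitably small in every case reduces to a finite case analysis patterned on the proof of Proposition~\ref{prop-edges}.
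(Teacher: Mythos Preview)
Your plan is workable but takes a substantially different route from the paper. The paper never applies Lemma~\ref{L-upperbound} to the full four-arc event and never does the bottom-arc case analysis you flag as the main obstacle. Instead it reduces everything to single-edge estimates already proven in Section~\ref{sec-edges}. For $2n-k_1<d\log n$ it observes that deterministically there are at most $(d\log n)^2$ choices for the edge $(x_1,k_1,y_1,\ell_1)$ (since $k_1>n$ forces at most one matching outer arc, and its degree is at most $d\log n$), so the number of pairs is at most $(d\log n)^2\, e(CP_n)$; taking expectations finishes. For $k_2<d\log n$ or $k_1-k_2<d\log n$ it first disposes of the case where $(x_1,k_1,y_1,\ell_1)$ is an exceptional edge using $\E[e'(CP_n)]=o(n\log n)$, and otherwise bounds $\Pr[A(q)]$ by the probability of the \emph{outer} edge alone (monotonicity), while counting inner edges deterministically: given the outer arc of length $k_1$, there are at most $k_1\cdot d\log n$ choices for the inner edge. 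Summing the outer-edge probability over its configurations then gives $O(n\log n)\cdot\sum_{k_1,\ell_1}\min\{k_1,\ell_1\}k_1^{-1/2}\ell_1^{-3/2}=O(n\log n)\cdot O(n)$. The payoff is that the two bottom arcs never require their own position analysis: one is absorbed into the known edge count and the other into the deterministic degree bound.

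One caution about your sketch: in the regime $\sqrt{\log n}<k_2<d\log n$ you invoke Lemma~\ref{L-Range} to say the expected number of such arcs is $o(n)$, and then propose to bound the second edge via Lemma~\ref{L-upperbound}. These do not compose directly: Lemma~\ref{L-Range} gives an expectation of one random quantity, while the second edge is a dependent event, so you cannot simply multiply the two bounds. Either commit fully to the direct route (apply Lemma~\ref{L-upperbound} with $s=t=2$ to each $q$ and sum, accepting the case analysis), or use the paper's monotonicity trick to drop to a single-edge probability and count the remaining edge deterministically. Also, when one $f_i$ drops out of $\widetilde{\prod}$ you are left with five factors, not four.
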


\begin{lem} \label{L-smallunnested}
    Let $Q_{1,2}$ be the set of all valid quadruples $q$ for which neither $x_1 < x_2 < x_2 + k_2 < x_1 + k_1$ nor $x_2 < x_1 < x_1 + k_1 < x_2 + k_2$ holds, and for which $k_1, k_2$ or $2n - (k_1+k_2)$ is less than $d \log n$. Then
    \[
    \sum_{q \in Q_{1,2}} \Pr[A(\b{x},\b{k},\b{y},\b{l})] = o(n^2 \log^2 n).
    \]
\end{lem}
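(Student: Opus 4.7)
The plan is to apply Lemma~\ref{L-upperbound} with $s = t = 2$ (so $v = 4$), choosing $d \geq 64$ so that whenever $k_i \leq d \log n$ we automatically have $f_i \leq k_i - 1 < 64 \log n$, and hence $f_i^{-3/2}$ is excluded from the bound
\[
\Pr[A(q)] \leq \beta_{2,2}\, n^3\, \widetilde{\prod_i} f_i^{-3/2}\, \widetilde{\prod_j} g_j^{-3/2};
\]
the analogous statement holds for the $g_j$. In the unnested configuration the two top arcs $(x_1,x_1+k_1)$ and $(x_2,x_2+k_2)$ are disjoint, and they partition $\{1,\dots,2n\}$ (minus their four endpoints) into three regions whose cardinalities differ from $k_1-1$, $k_2-1$, and $2n-k_1-k_2-2$ by an additive $O(1)$ correction coming from the four bottom endpoints. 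Therefore $f_0, f_1, f_2$ are, up to $O(1)$, equal to $2n-k_1-k_2$, $k_1$, $k_2$, and the condition defining $Q_{1,2}$ is equivalent to at least one of $k_1, k_2, 2n-(k_1+k_2)$ being at most $d \log n$. By the symmetry that swaps the two edges, I reduce to the two sub-cases (A) $k_1 \leq d \log n$ and (B) $2n - (k_1+k_2) \leq d \log n$.

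In sub-case (A) the factor $f_1^{-3/2}$ is dropped from the bound, and since $k_1$ is small we have $f_0 \geq \frac{1}{2}(2n-k_2)$ for all sufficiently large $n$. Using this comparison, the right-hand side of Lemma~\ref{L-upperbound} is dominated, up to constants, by a ``free second edge'' factor matching the single-edge bound of Lemma~\ref{L-upperbound} applied to $(x_2,k_2,y_2,\ell_2)$, times a ``short first edge'' residual factor depending only on $(x_1,k_1,y_1,\ell_1)$. Summing the free factor against $(x_2,k_2,y_2,\ell_2)$ yields a contribution of $O(n \log n)$ by the computation carried out in Corollary~\ref{C-edges1}, while summing the short residual factor against the short-first-edge quadruples yields $o(n \log n)$ by the argument of Proposition~\ref{prop-edges}(i): the arcs of length at most $\sqrt{\log n}$ contribute at most $n \sqrt{\log n}$ edges, while those of length between $\sqrt{\log n}$ and $d \log n$ are $o(n)$ in expectation by Lemma~\ref{L-Range} and each participate in at most $d \log n$ edges. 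Multiplying the two estimates gives $o(n^2 \log^2 n)$; the parallel sub-case in which $\ell_1$ (rather than $k_1$) is the small parameter is handled identically by the symmetry between the top and bottom sides.

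In sub-case (B), the disjointness condition $x_1 + k_1 < x_2$ together with $x_2 + k_2 \leq 2n$ and $k_1 + k_2 \geq 2n - d \log n$ forces $x_1 \leq d \log n$, $x_2 - (x_1+k_1) \leq d \log n$, and $2n - (x_2+k_2) \leq d \log n$. Hence the top-arc positions $(x_1,k_1,x_2,k_2)$ range over only $O((\log n)^3)$ configurations, and at most $O(\log n)$ points lie outside both top arcs. Each bottom arc $(y_i, y_i+\ell_i)$ must interlace the corresponding top arc and therefore has at least one endpoint lying inside that top arc, leaving at most $O(n)$ positions for the remaining endpoint. A direct count of the resulting configurations combined with the probability bound of Lemma~\ref{L-upperbound} then yields a contribution of $o(n^2 \log^2 n)$ in this case as well.

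The main obstacle will be the factorization step in sub-case (A): the joint probability bound of Lemma~\ref{L-upperbound} does not split cleanly into a product of single-edge bounds, because the exterior region $f_0$ (and analogously $g_0$, when $\ell_1$ is also small) is shared between the two edges. The key technical observation is that when $k_1 \leq d \log n$ is small, $f_0$ differs from $2n - k_2$ only by lower-order terms, so the joint bound can be dominated by the single-edge bound for the second edge multiplied by a residual factor depending only on the short first edge, after which the analyses of Proposition~\ref{prop-edges}(i) and Corollary~\ref{C-edges1} carry over directly.
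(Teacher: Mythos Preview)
Your overall plan---split into a ``one short top arc'' case (A) and a ``small exterior'' case (B), and in each case try to decouple the two edges---is exactly the right instinct, and it is what the paper does. However, both sub-cases have genuine gaps as written.

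\textbf{Sub-case (A).} The factorization of the Lemma~\ref{L-upperbound} bound into a ``free second edge'' factor times a ``residual depending only on $(x_1,k_1,y_1,\ell_1)$'' does not go through. You correctly note that $f_0$ is within $O(\log n)$ of $2n-k_2$ because $k_1$ is small. But the hypothesis of the lemma says nothing about $\ell_1$: it may be of order $n$. In that case $g_0$ (which in the unnested bottom configuration is $\approx 2n-\ell_1-\ell_2$) is \emph{not} close to $2n-\ell_2$, so the residual you need involves the ratio $(2n-\ell_2)^{3/2}/(2n-\ell_1-\ell_2)^{3/2}$, which can be arbitrarily large and depends on $\ell_2$; it is not a function of the first edge alone. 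Your parenthetical ``analogously $g_0$, when $\ell_1$ is also small'' identifies the wrong regime---the obstruction is precisely when $\ell_1$ is \emph{large}. Relatedly, Proposition~\ref{prop-edges}(i) bounds an \emph{expectation}, so even if a residual existed, multiplying $\E[\text{short-edge count}]$ by $\E[e(CP_n)]$ is not legitimate for bounding $\E[(\text{short-edge count})\cdot e(CP_n)]$; these random variables are not independent. The paper circumvents this by first disposing of the case where the second edge is exceptional (there the short-edge count has the deterministic bound $n\cdot d\log n$, so one may take expectation of the other factor), then for $\sqrt{\log n}\le k_1\le d\log n$ proving a genuinely multiplicative probability estimate (the paper's \eqref{eq-unnestedsmall}) by returning to the explicit Catalan ratio rather than invoking Lemma~\ref{L-upperbound}. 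That extra step is the missing ingredient in your argument.

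\textbf{Sub-case (B).} Your count of top configurations is off by a factor of order $n$. The three gap constraints $x_1\le d\log n$, $x_2-(x_1+k_1)\le d\log n$, $2n-(x_2+k_2)\le d\log n$ pin down three quantities, but $(x_1,k_1,x_2,k_2)$ has four degrees of freedom; one of $k_1,k_2$ is still a free parameter of size $\Theta(n)$. Thus there are $O(n(\log n)^3)$ top configurations, not $O((\log n)^3)$, and the sketch no longer closes. The paper handles this by fixing $k_1,k_2$, bounding only the probability that the two \emph{top} arcs match by $O(n^{3/2}k_1^{-3/2}k_2^{-3/2})$ (a variant of Lemma~\ref{L-upperbound} with $s=2,t=0$), and then bounding the number of compatible bottom edges deterministically by $k_1k_2$; the resulting sum $\sum_{k_1}k_1^{-1/2}(2n-k_1)^{-1/2}=O(1)$ gives $O(n^{3/2}(\log n)^3)=o(n^2\log^2 n)$.

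A minor point: your opening sentence says ``choosing $d\ge 64$ so that $k_i\le d\log n$ forces $f_i<64\log n$''---that implication goes the wrong way (it would require $d\le 64$).
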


In order to complete the proof of Proposition~\ref{prop-edgevariance} we can now assume that all $f_i, g_j$ are at least $d \log n$. The first step will be to deal with the case that some of the arcs are nested.

\begin{lem} \label{L-variancenested}
    Let $Q_2$ be the set of quadruples with $x_1 < x_2 < x_2 + k_2 < x_1 + k_1$ and $f_i, g_j \geq d \log n$. Then
    \[
    \sum_{q \in Q_2} \Pr[A(\b{x},\b{k},\b{y},\b{l})] = o(n^2 \log^2 n)
    \]
\end{lem}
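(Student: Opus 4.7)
The plan is to apply Lemma~\ref{L-upperbound}, which, once $d$ is chosen to be at least $16(s+t)=64$, yields
\[
\Pr[A(q)] \leq \beta_{2,2}\, n^3 \prod_{i=0}^{2} f_i^{-3/2} \prod_{j=0}^{2} g_j^{-3/2}
\]
uniformly over $q \in Q_2$. It thus suffices to prove
\[
\sum_{q \in Q_2} \prod_{i=0}^{2} f_i^{-3/2} \prod_{j=0}^{2} g_j^{-3/2} = o\!\left(\frac{\log^2 n}{n}\right).
\]

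A quadruple $q \in Q_2$ is specified by the eight distinct endpoint positions $p_1 < \cdots < p_8$ in $[1,2n]$ together with a labelling identifying the four nested top endpoints $x_1 < x_2 < x_2 + k_2 < x_1 + k_1$ and the four bottom endpoints $y_1, y_1+\ell_1, y_2, y_2+\ell_2$ (with bottom arcs nested or disjoint, and each top arc interlacing with its corresponding bottom arc). This decomposes $Q_2$ into a bounded number of combinatorial ``types'', which I would treat one at a time. For a fixed type, I would parametrize the configuration by the nine consecutive gap sizes $h_0, \ldots, h_8$ (with $\sum_i h_i = 2n - 8$) and write each $f_i$ and $g_j$ explicitly as a linear form in the $h_j$'s; crucially, each $h_j$ belongs to exactly one $f$-factor and one $g$-factor.

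I would then estimate the sum via an integral comparison under the rescaling $h_j = n s_j$. The integrand $\prod f_i^{-3/2} \prod g_j^{-3/2}$ scales as $n^{-9}$, while the $8$-dimensional measure on the simplex $\sum_j s_j = 2$ contributes a factor of $n^8$, yielding a leading $n^{-1}$. The lower bound $f_i, g_j \geq d \log n$ becomes $F_i, G_j \geq d \log n / n$ after rescaling, where $F_i = f_i/n$ and $G_j = g_j/n$. The key observation is that the edge and nesting conditions force each $F_i$ and each $G_j$ to be a sum of at least two $s_j$'s; consequently, each individual boundary $F_i = 0$ or $G_j = 0$ has codimension at least $2$, and the divergence degree $3/2$ of a single factor is integrable against such a measure. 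A joint limit $F_a, G_b \to 0$ produces a singularity of degree $3$, and an examination of each combinatorial type shows that such a joint vanishing has codimension at least $3$, with equality only in configurations where $F_a$ and $G_b$ are both $2$-variable sums sharing a common variable; in those borderline cases the integral diverges only logarithmically in the cutoff, contributing $O(\log n)$, while triple and higher joint limits always have enough codimension to integrate. Thus the total integral is $O(\log n)$, and multiplying by the leading $n^{-1}$ gives the desired $o(\log^2 n / n)$.

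The main obstacle is the case analysis: one must enumerate all combinatorial types and, for each, verify that every joint boundary vanishing of multiple $F_i, G_j$ has codimension at least as large as the singularity degree. This reduces to a bipartite incidence check on the 9-by-2 matrix that records which $F$-class and which $G$-class each $h_j$ belongs to, and one has to rule out subsets of $F$'s and $G$'s whose joint vanishing would force too few $h$'s to zero. The nested-top hypothesis is essential here, since it ensures that the four top endpoints divide the line into three well-separated regions, forcing the incidence pattern to spread the $h_j$'s across enough $F$- and $G$-classes for the codimension condition to hold, which is precisely the geometric reason nested configurations contribute only $o(n^2 \log^2 n)$ to the second moment rather than matching the $\Theta(n^2 \log^2 n)$ contribution of the generic unnested pairs.
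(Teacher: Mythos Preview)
Your approach is genuinely different from the paper's and is a reasonable high-level strategy, but the crucial step is only asserted, not carried out.

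The paper proceeds concretely: it splits into three cases according to whether the bottom arcs are disjoint, nested one way, or nested the other way. In each case it invokes Lemma~\ref{L-upperbound} and then uses the identity $\max\{k_1-k_2,k_2\}\ge k_1/2$ (and the analogue for $\ell$) to rewrite the probability bound in terms of $k_1,\ell_1$ and the ``small'' parameters $k_m=\min\{k_1-k_2,k_2\}$, $\ell_m$. It then separately bounds the number of quadruples with given $(k_1,k_2,\ell_1,\ell_2)$ by $O\bigl(g(k_1,\ell_1)\cdot k_1\cdot \min\{k_m,\ell_m\}\bigr)$; the extra factor $k_1$ (rather than $n$) is exactly where nesting is used, because $x_2$ is forced to lie under the outer arc. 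The resulting sum factors as an edge-type sum in $(k_m,\ell_m)$, giving one $\log n$, times a sum in $(k_1,\ell_1)$ that is $O(n^2)$ rather than $O(n^2\log n)$, so the total is $O(n^2\log n)=o(n^2\log^2 n)$.

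Your route via rescaling and integral comparison is legitimate, and your claim that each $F_i,G_j$ is a sum of at least two gap variables is correct (the interlacing conditions guarantee an interior endpoint in each inner region, and the outer/difference regions consist of at least two intervals). The gap is in the next step. You assert that every joint vanishing of a pair $(F_a,G_b)$ has codimension at least $3$, with equality only in a single borderline case, and that higher-order joints are always integrable. But this is precisely where the work lies, and you do not do it. In particular, the reason the \emph{unnested} configurations contribute $\Theta(n^2\log^2 n)$ is that there the two borderline pairs $(F_1,G_1)$ and $(F_2,G_2)$ are supported on disjoint sets of gaps, so their joint vanishing has codimension exactly $6$ against a degree-$6$ singularity, producing $\log^2 n$. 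To get $o(n^2\log^2 n)$ in the nested case you must show that no two borderline pairs are gap-disjoint, and your proposal does not identify which combinatorial feature of nesting forces this overlap. The sentence about the top endpoints ``spreading the $h_j$'s'' gestures at the right idea but does not pin it down: what actually happens is that the $f_1$-region $(x_1,x_2)\cup(x_2+k_2,x_1+k_1)$ shares boundary points with the $f_2$-region, so the would-be borderline pair for edge~1 necessarily shares gaps with the one for edge~2, and this is what you would need to verify type by type. Until that verification is done, your bound of $O(\log n)$ on the integral is unjustified, and the argument is incomplete.
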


For the remainder of this section on we will assume that any quadruple has no nested arcs. First we take care of the quadruples where one of the arcs is too large.

\begin{lem} \label{L-variancelong}
Let $Q_3$ be the set of quadruples with $\max\{k_1,k_2,\ell_1,\ell_2\} > cn$. Then
    \[
    \sum_{q \in Q_3} \Pr[A(\b{x},\b{k},\b{y},\b{l})] = o(n^2 \log^2 n)
    \]
\end{lem}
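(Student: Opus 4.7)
The plan is to adapt the strategy of Proposition~\ref{prop-edges} parts (iv) and (v), now keeping track of two edges simultaneously. Apply Lemma~\ref{L-upperbound} with $s=t=2$. Since by Lemma~\ref{L-variancenested} we may restrict to quadruples in which the two top arcs are disjoint and the two bottom arcs are disjoint, a short inspection of the definitions gives $f_i = k_i - O(1)$ and $f_0 = K - O(1)$, together with the analogous formulas for $g$, where $K := 2n - k_1 - k_2$ and $L := 2n - \ell_1 - \ell_2$. Consequently,
\[
\Pr[A(\mathbf{x},\mathbf{k},\mathbf{y},\mathbf{l})] \;\leq\; C \cdot n^3 \cdot (k_1 k_2 \ell_1 \ell_2 K L)^{-3/2}.
\]
For the count of quadruples with given lengths, the two disjoint top arcs give at most $O(K^2)$ ordered placements of $(x_1,x_2)$, and for each such placement the interlacing constraint forces $y_i$ to lie in a window of size $O(\min(k_i,\ell_i))$, so
\[
\#\{q : \text{given lengths}\} \;\leq\; C \, K^2 \, \min(k_1,\ell_1)\min(k_2,\ell_2).
\]
A symmetric argument replacing top by bottom gives the same bound with $L^2$ in place of $K^2$, and taking the geometric mean yields the hybrid bound with $KL$ in place of $K^2$.

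By the symmetry of the four lengths, I may assume (absorbing a factor of $4$) that $k_1 = \max\{k_1,k_2,\ell_1,\ell_2\}$, so $k_1 > cn$. The sum over $Q_3$ then breaks into subcases according to which of $k_2,\ell_1,\ell_2$ also exceed $cn$. In each subcase, after the substitution $k_i = n u_i$, $\ell_j = n v_j$, I would replace the sum by its associated integral and evaluate explicitly. When only $k_1$ is long, the factors $L^{-3/2} = O(n^{-3/2})$ (since $L \geq (2-2c)n$), $\sum_{k_1 > cn} k_1^{-3/2} K^{1/2} = O(1)$ (arising from $\int_c^{2-v} u^{-3/2}(2-u-v)^{1/2}\, du$), $\sum_{\ell_1 \leq cn} \ell_1^{-1/2} = O(n^{1/2})$, and $\sum_{k_2,\ell_2 \leq cn} \min(k_2,\ell_2)/(k_2\ell_2)^{3/2} = O(\log n)$ (as computed inside Proposition~\ref{prop-edgeasym}(iii)) combine to give a total bound of $O(n^2 \log n)$. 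In subcases where more of the lengths are long, the hybrid bound using $(KL)^{-1}$ in place of $K^{1/2} L^{-3/2}$ is needed in order to suppress contributions as $K$ or $L$ approach $d \log n$; after evaluating the resulting bounded double integrals, each subcase yields $O(n^2 \log n)$ or smaller.

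The main obstacle is the careful case-by-case bookkeeping. Several distinct sub-scenarios arise depending on which of the four lengths exceed $cn$, and in certain of them the naive bound $\min(k_i,\ell_i) \leq \sqrt{k_i \ell_i}$ must be avoided, since it introduces a spurious factor of $\log n$ into the regular edge's length sum; instead one splits into $k_i \leq \ell_i$ and $k_i > \ell_i$ and integrates exactly as in Proposition~\ref{prop-edgeasym}(iii). Once every subcase is verified to contribute at most $O(n^2 \log n)$, the full sum over $Q_3$ is $o(n^2 \log^2 n)$.
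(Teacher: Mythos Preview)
Your approach is sound and uses the same two ingredients as the paper: the probability bound from Lemma~\ref{L-upperbound} (giving $\Pr[A(q)]=O(n^3(k_1k_2\ell_1\ell_2KL)^{-3/2})$ with $K=2n-k_1-k_2$, $L=2n-\ell_1-\ell_2$) together with a combinatorial count of quadruples with prescribed lengths. The gap-size estimates $f_i=k_i-O(1)$, $f_0=K-O(1)$ are correct in the unnested regime, and your interlacing count $\#\{q\}\le CK^2\min(k_1,\ell_1)\min(k_2,\ell_2)$ (and its $L^2$ and $\sqrt{K^2L^2}=KL$ variants) is valid. One typo: after multiplying the hybrid count $KL$ against the probability, the surviving power of $KL$ is $(KL)^{-1/2}$, not $(KL)^{-1}$; your later arithmetic implicitly uses the correct exponent, so this is cosmetic.

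Where you differ from the paper is in the organisation of the case analysis. You split according to which subset of $\{k_2,\ell_1,\ell_2\}$ also exceeds $cn$; the paper instead, after fixing $k_1=\max$, splits according to which of $\ell_0,\ell_1,\ell_2$ is maximal (where $\ell_0=L$), and in the exceptional subcase $\ell_1=\max$ replaces the count $m_0^2 m_1 m_2$ by a different count $KL\,m_0 m_2$. The paper's base count $m_0^2 m_1 m_2$ with $m_0=\min(K,L)$ is slightly sharper than your geometric-mean hybrid $KL\,m_1m_2$, but either suffices. Your decomposition is arguably closer in spirit to Proposition~\ref{prop-edges}(iv)--(v), while the paper's ``which $\ell_i$ is largest'' scheme lets it absorb one $n^{3/2}$ factor immediately. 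Both routes land at $O(n^2\log n)$ in every subcase.

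The one place to tighten: you assert ``each subcase yields $O(n^2\log n)$ or smaller'' without writing the computations for the mixed subcases (e.g.\ $k_1,\ell_2>cn$ but $k_2,\ell_1\le cn$, where both $K$ and $L$ can approach $d\log n$). These do go through with the hybrid $(KL)^{-1/2}$ bound---each of $\sum_{k_1>cn}K^{-1/2}k_1^{-3/2}=O(n^{-1})$ and $\sum_{\ell_2>cn}L^{-1/2}\ell_2^{-3/2}=O(n^{-1})$ absorbs the dangerous $K,L$ factors---but a reader will want to see at least one such case spelled out.
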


We lastly rule out all of the remaining quadruples that are valid but not good.

\begin{lem} \label{L-variancenotgood}
Let $Q_4$ be the set of valid quadruples that are not good and have $d \log n \leq k_1,k_2,\ell_1,\ell_2 \leq cn$. Then
    \[
    \sum_{q \in Q_4} \Pr[A(\b{x},\b{k},\b{y},\b{l})] = o(n^2 \log^2 n).
    \]
\end{lem}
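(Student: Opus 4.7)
The plan is to reduce to the un-nested case and then exploit the codimension-one nature of the failure of goodness: the leading-order $O(n^2 \log^2 n)$ comes from good quadruples, so the not-good ones must automatically save a factor of at least $\log n$ to give $o(n^2 \log^2 n)$.

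First, by Lemma~\ref{L-variancenested} and its symmetric version for the bottom arcs, I may assume both the two top arcs and the two bottom arcs are un-nested. Since the quadruple is valid, two distinct top endpoints cannot coincide (arcs of a single Catalan-arc matching have distinct endpoints), two distinct bottom endpoints cannot coincide, and a top and a bottom endpoint cannot coincide (they have different colors); moreover, endpoints of the same arc differ by at least $d\log n > 2$. Hence the only way to violate condition~(2) of goodness is for two endpoints from distinct arcs to lie at positions differing by exactly~$1$. I union-bound over the at most $24$ such pairs. For each of them, since $d$ is chosen large enough that all of $f_0, f_1, f_2, g_0, g_1, g_2 \geq 64 \log n$, Lemma~\ref{L-upperbound} applies; using $(2n-k_1-k_2), (2n-\ell_1-\ell_2) = \Theta(n)$ in the regime $k_i, \ell_j \leq cn$, the bound reduces to
\[
\Pr[A(\b{x},\b{k},\b{y},\b{l})] = O\!\left(k_1^{-3/2} k_2^{-3/2} \ell_1^{-3/2} \ell_2^{-3/2}\right).
\]

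I next split into cases by where the adjacent pair of endpoints lies. If both endpoints lie in the same edge (say edge~$1$), the constraint forces $y_1$ or $y_1+\ell_1$ as a linear function of $x_1$, reducing the $O(n\min(k_1,\ell_1))$ count of valid $(x_1, y_1)$ pairs to $O(n)$; the edge~$2$ count $O(n\min(k_2, \ell_2))$ is unaffected, giving at most $O(n^2 \min(k_2,\ell_2))$ quadruples. Using $\sum_{k \geq d\log n} k^{-3/2} = O((\log n)^{-1/2})$ and $\sum_{k,\ell} \min(k,\ell)\,k^{-3/2} \ell^{-3/2} = O(\log n)$ from Proposition~\ref{prop-edgeasym}, the contribution is
\[
O(n^2) \cdot O((\log n)^{-1}) \cdot O(\log n) = O(n^2).
\]
If the adjacent pair straddles the two edges, then exactly one of the four positions $x_1, x_2, y_1, y_2$ is determined by the constraint; checking each sub-case shows the total count is at most $O(n \min(k_1, \ell_1) \min(k_2, \ell_2))$, so the contribution is
\[
O(n) \cdot \Bigl(\sum_{k,\ell} \min(k,\ell)\, k^{-3/2} \ell^{-3/2}\Bigr)^2 = O(n \log^2 n).
\]

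Both are $o(n^2 \log^2 n)$, and since only $O(1)$ sub-cases arise, summing completes the proof. The main obstacle is verifying across the sub-cases that "saving one degree of freedom" indeed translates to the claimed reduction in the quadruple count: depending on whether the forced variable has ambient range $O(n)$ (an $x_i$) or the smaller range $O(\min(k_i, \ell_i))$ (a $y_i$ interlacing a fixed $x_i$), the total count differs by a polynomial-in-$\log n$ factor, and one must confirm that even the least favorable case (a within-edge constraint) still gives only $O(n^2)$, comfortably beating $n^2 \log^2 n$.
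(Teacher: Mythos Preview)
Your approach is essentially the paper's: bound the probability via Lemma~\ref{L-upperbound} to get $O(k_1^{-3/2}k_2^{-3/2}\ell_1^{-3/2}\ell_2^{-3/2})$, then exploit that ``not good'' forces an adjacency constraint which kills one degree of freedom in the count of $(x_1,y_1,x_2,y_2)$. The paper organizes the case split by whether each single-edge quadruple $(x_i,k_i,y_i,\ell_i)$ is good, obtaining the count $O(g(k_1,\ell_1)n + ng(k_2,\ell_2)+n^2)$; you organize it by whether the adjacent pair lies within one edge or across the two. Your cross-edge bound $O(n\min(k_1,\ell_1)\min(k_2,\ell_2))$ is in fact sharper than the paper's $O(g(k_1,\ell_1)\cdot n)$, because you use the interlacing constraint on the remaining coordinate rather than the crude bound $2n$; both bounds suffice.

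One genuine but easily repaired gap: your reduction to the un-nested case via Lemma~\ref{L-variancenested} alone is incomplete. That lemma only treats nested quadruples with all $f_i,g_j\ge d\log n$, whereas a nested quadruple with $k_1,k_2\in[d\log n,cn]$ and $k_1-k_2$ small has $f_1=k_1-k_2-O(1)$ small and so lies outside its scope. You need to invoke Lemma~\ref{L-smallnested} as well to dispose of those (its hypothesis ``$k_1-k_2<d\log n$'' is exactly this case). Alternatively, note that in the paper's flow the set $Q_4$ is implicitly already restricted to un-nested quadruples by the sentence preceding the statements of Lemmas~\ref{L-variancelong} and~\ref{L-variancenotgood}, so the reduction is not actually part of what the lemma asks you to prove. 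After this correction, your claim that $f_i,g_j\ge 64\log n$ (which relies on the un-nested structure) is justified and the rest of the argument goes through.
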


Before we give the proof of Proposition~\ref{prop-edgevariance} we recall a definition from Proposition~\ref{prop-edgeasym}. For positive integers $k,\ell$, we defined $g(k,\ell)$ as the number of pairs $(x,y)$ such that $(x,k,y,\ell)$ is a good quadruple. We are now ready to prove our desired result on the variance.
\begin{proof}[Proof of Proposition~\ref{prop-edgevariance}]
By Lemmas~\ref{L-variancefewarcs} through \ref{L-variancenotgood} we only have to consider quadruples $(\b{x},\b{k},\b{y},\b{l})$ that are good, have no nested arcs, and which have $d \log n \leq k_1,k_2,\ell_1,\ell_2 \leq cn$. In this case, given $k_1,k_2,\ell_1,\ell_2$ there are $g(k_1,\ell_1)$ ways to pick $x_1, y_1$ and after that at most $g(k_2,\ell_2)$ ways to pick $x_2, y_2$.

Therefore, it suffices to show that for $d$ large enough and $c$ small enough we have
\begin{equation} \label{eq-edgevariance}
\Pr[A(\b{x},\b{k},\b{y},\b{l})] \leq (1+\epsilon) \cdot \frac1{16 \pi} k_1^{-3/2} \ell_1^{-3/2} \cdot \frac1{16 \pi} k_2^{-3/2} \cdot \ell_2^{-3/2},
\end{equation}
as this implied that the desired contribution is at most
\[
\sum_{k_1,k_2,\ell_1,\ell_2} g(k_1,\ell_1) \cdot g(k_2,\ell_2) \cdot (1+\epsilon) \cdot \frac1{16 \pi} k_1^{-3/2} \ell_1^{-3/2} \cdot \frac1{16 \pi} k_2^{-3/2} \cdot \ell_2^{-3/2},
\]
which factors as
\[
(1+\epsilon) \left(\sum_{k_1,\ell_1} g(k_1,\ell_1) \frac1{16 \pi} k_1^{-3/2} \ell_1^{-3/2}\right) \cdot \left(\sum_{k_2,\ell_2} g(k_2,\ell_2) \frac1{16 \pi} k_2^{-3/2} \ell_2^{-3/2}\right),
\]
which by Proposition~\ref{prop-edgeasym} is at most $(1+\epsilon)^3 \cdot (\frac1{\pi} n \log n)^2$ for $d$ large enough and $c$ small enough.

In order to show \eqref{eq-edgevariance} we follow the same approach as the proof of part 1 of Proposition~\ref{prop-edgeasym}. First, with probability $2^{-8}$ all of $x_i, x_i+k_i, y_i, y_i+\ell_i$ receive the correct color and with probability $2^{-4}$ the number of red points between $x_i$ and $x_i+k_i$ and the number of blue points between $y_j$ and $y_j+\ell_j$ are all even. This follows immediately from the aforementioned proof when neither $(x_1,x_1+k_1)$ and $(y_2,y_2+\ell_2)$ nor $(x_2,x_2+k_2)$ and $(y_1,y_1+\ell_1)$ intersect. Otherwise, we may without loss of generality assume that $x_1 < y_1 < x_1+k_1 < x_2 < y_1+\ell_1 < y_2 < x_2+k_2 < y_2+\ell_2$. In this case, color all the remaining points between $x_1$ and $y_2+\ell_2$ except for $x_1+1, y_1+1, x_2+1, y_2+1$. Then, given any such coloring there is a unique choice for the remaining four colors that makes the number of red/blue in the desired regions even, as first $y_2+1$ is uniquely determined, then $x_2+1$, then $y_1+1$ and lastly $x_1+1$.

Now suppose that $r_i$ is half the number of red points between $x_i$ and $x_i+k_i$ for $i=1,2$, $r_0$ is half the number of red points outside of the arcs, and $b_0, b_1, b_2$ are defined similarly. Conditioned on the values of $r_i$ and $b_j$ we can write the desired probability as
\[
\frac{C_{r_0} C_{r_1} C_{r_2}}{C_{r_0+r_1+r_2+2}} \cdot \frac{C_{b_0} C_{b_1} C_{b_2}}{C_{b_0+b_1+b_2+2}}.
\]
Again by Lemma~\ref{L-Chernoff}, with high enough probability we can approximate $r_i$ with $k_i/4$ ($i=1,2$) and $r_0$ with $n/4 - k_1/4 - k_2/4$, and similarly for the $b_i$, and the same asymptotic considerations as in Proposition~\ref{prop-edgeasym} will now yield the desired result.
\end{proof}

With all this we can conclude the results of Theorem~\ref{T-edges}.
\begin{proof}[Proof of Theorem~\ref{T-edges}]
The asymptotic formula for the expected number of edges follows from Corollary~\ref{C-edges1}.  The concentration result follows from Proposition~\ref{prop-edgevariance} and essentially the same proof used in the proof of Theorem~\ref{T-isolated}.
\end{proof}

\section{Induced subgraphs and connected components} \label{sec-induced}
In this section we prove results on the number of induced subgraphs of $CP_n$ isomorphic to a given Catalan-pair graph $H$ on at least $3$ vertices, and we will use this to prove Theorem~\ref{T-induced}.  At the end of the section we will also discuss a result about the connected components of $CP_n$.

\subsection{A lower bound for the number of induced subgraphs} \label{subsec-lowerbound}

Recall that $N^*_H(G)$ denotes the number of induced subgraphs of $G$ isomorphic to $H$, and  that $A(\b{x},\b{k},\b{y},\b{l})$ denotes the intersection of the following events.
\begin{itemize}
\item[1.] The points $x_i$ and $x_i+k_i$ are colored red and the points $y_j$ and $y_j+\ell_j$ are colored blue for all $i,j$.
\item[2.] For all $i$ and $j$ the number of red points $x$ with $x_i<x<x_i+k_i$ and the number of blue points $y$ with $y_j<y<y_j+\ell_j$ is even.
\item[3.] For all $i$ and $j$ we have that $(x_i,x_i+k_i)$ and $(y_j,y_j+\ell_j)$ match in $CP_n$.
\end{itemize}

The following lemma will be a key step to proving the general lower bound. Note that this lemma can be seen as a converse to Lemma~\ref{L-upperbound}.

\begin{lem}\label{L-GenLowProb}
There exists a positive real number $\alpha_{s,t}$ with
\[
\Pr[A(\b{x},\b{k},\b{y},\b{l})] \ge \alpha_{s,t} \prod_{i=1}^s k_i^{-3/2} \prod_{j=1}^t \ell_j^{-3/2}
\]
for all good quadruples $(\bf{x},\bf{k},\bf{y},\bf{l})$ where $\bf{x}$ and $\bf{y}$ have length $s$ and $t$ respectively.
\end{lem}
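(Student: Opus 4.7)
The plan is to mirror the proof of Lemma~\ref{L-upperbound} with inequalities reversed, using the Chernoff estimate \eqref{eq-Conc} to exhibit a positive-probability event on which every gap behaves as expected. Set $v := s + t$ for brevity. First I would condition on the event $E_0$ that all $2v$ specified endpoints receive their prescribed colors, which happens with probability $2^{-2v}$. Conditional on $E_0$, the remaining $2n-2v$ free colors are essentially i.i.d.\ Bernoulli$(\tfrac12)$ (the single forced parity color gives only a bounded constant correction). I would then split the arcs by a threshold: call a red arc \emph{large} when $k_i \ge F_0 + 2v$ for a parameter $F_0 = F_0(v)$ to be chosen, and \emph{small} otherwise. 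Since in a good quadruple at most $2(s-1)$ other red and $2t$ blue endpoints can lie interior to the $i$th red arc, one has $f_i \ge k_i - 2v + 1$, so every large red arc satisfies $f_i \ge F_0$; small red arcs have $k_i$ bounded by a constant depending only on $v$. The same dichotomy is set up on the blue side.

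Next I would define the typical event $T \subseteq E_0$ requiring (a) every $F_i$ and $G_j$ is even, and (b) $r_i \in [f_i/8,\, 3f_i/8]$ for every large red $i \ge 1$ and $b_j \in [g_j/8,\, 3g_j/8]$ for every large blue $j \ge 1$. Since the $F_i$'s are sums over disjoint sets of independent Bernoullis, they are independent with $\Pr[F_i \text{ even}] = \tfrac12$, and similarly for the $G_j$'s, so (a) has probability $\Omega_v(1)$ even after accounting for the forced total-parity condition. For (b), \eqref{eq-Conc} bounds each window failure by $2 e^{-c F_0}$; taking $F_0 = F_0(v)$ sufficiently large and applying a union bound yields $\Pr[T \mid E_0] \ge c_v > 0$.

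On $E_0 \cap T$, Lemma~\ref{L-CatProb} gives the conditional matching probability as $\prod_{i=0}^s C_{r_i}/C_R \cdot \prod_{j=0}^t C_{b_j}/C_B$, and the lower bound of Corollary~\ref{C-LargeCat} applied to each color separately yields
\[
\prod_{i=0}^s \f{C_{r_i}}{C_R}\cdot \prod_{j=0}^t \f{C_{b_j}}{C_B} \;\ge\; \alpha_s \f{R^{3/2}}{\prod' r_i^{3/2}}\cdot \alpha_t \f{B^{3/2}}{\prod' b_j^{3/2}}.
\]
The trivial inequality $R \ge r_0$ gives $R^{3/2}/r_0^{3/2} \ge 1$, absorbing the $i=0$ red factor; and for each large $i \ge 1$ the condition $r_i \le 3 k_i/8$ implies $r_i^{-3/2} \ge (8/3)^{3/2} k_i^{-3/2}$. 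Since each small-arc factor satisfies $k_i^{-3/2} \le 1$ (because $k_i \ge 2$ in a good quadruple), one has $\prod_{i\text{ large}} k_i^{-3/2} \ge \prod_{i=1}^s k_i^{-3/2}$, with the analogous bound on the blue side. Multiplying by $\Pr[E_0 \cap T] \ge 2^{-2v} c_v$ produces the claimed bound with an explicit constant $\alpha_{s,t}$.

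The main obstacle will be handling the small arcs, where Chernoff gives no useful concentration of $r_i$ and Corollary~\ref{C-LargeCat} cannot directly supply a $k_i^{-3/2}$ factor. This is resolved by the key observation that each such factor is bounded above by $1$, so omitting the small-arc factors from the intermediate lower bound only strengthens it in the direction we need.
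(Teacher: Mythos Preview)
Your approach has a genuine gap in the parity step, and it also introduces machinery that the paper shows to be unnecessary.

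For the parity event (a), you argue that the $F_i$'s are sums over disjoint sets and hence independent, ``and similarly for the $G_j$'s.'' But you need the $F_i$-events and $G_j$-events to hold \emph{jointly} with probability $\Omega_v(1)$, and the $f$-regions and $g$-regions are not disjoint from one another: they are two different partitions of the same set of free points. Mutual independence within each color does not yield joint independence across colors. (As a toy obstruction, if some $f_i$-region and some $g_j$-region were the same single point $p$, then ``$F_i$ even'' forces $p$ blue while ``$G_j$ even'' forces $p$ red.) The paper resolves this with an explicit triangularization: the $s+t$ points $x_i+1$ and $y_j+1$ are pairwise distinct and distinct from all endpoints (this is exactly where the good-quadruple spacing condition enters), and processing them from right to left shows that for any coloring of the remaining free points there is a unique coloring of these $s+t$ points making all parities even. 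This gives the parity event probability exactly $2^{-(s+t)}$.

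More strikingly, the whole Chernoff/threshold apparatus is unnecessary. One always has $2r_i \le f_i \le k_i-1$, so $\max(r_i,1)\le k_i$ \emph{deterministically}; hence $r_i^{-3/2}\ge k_i^{-3/2}$ whenever $r_i\ge 1$, while the $r_i=0$ terms are absent from $\prod'$ and the corresponding factor $k_i^{-3/2}\le 1$ can be inserted for free. Combined with $R_n\ge r_0$ to absorb the $i=0$ factor, this converts the lower bound of Corollary~\ref{C-LargeCat} directly into the desired inequality, with no concentration and no large/small split. Incidentally, your small-arc step is also slightly off as written: dropping a factor $r_i^{-3/2}$ with $r_i\ge 2$ from a lower bound makes the bound larger, not smaller. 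This is easily repaired (for a small arc $r_i$ is bounded by a constant depending only on $v$), but the deterministic inequality above makes the repair moot.
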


\begin{proof}
We first show that with probability $2^{-3 (s+t)}$ the first two conditions are satisfied. It is clear that with probability $1/2$ all of the points $x_i, x_i+k_i, y_j, y_j+\ell_j$ receive the correct color, so with probability $2^{-2(s+t)}$ all of these points have the correct color. Now conditioned on all of these points having the correct color, we show that with probability $2^{-(s+t)}$ the second condition is satisfied. Consider all the points of the form $x_i+1$ and $y_j+1$, and note that by assumption of $(\b{x},\b{k},\b{y},\b{l})$ being a good quadruple all of these points are different and not equal to any of the $x_i, x_i+k_i, y_j$ and $y_j+\ell_j$. Consider the rightmost of these points, and suppose that it is equal to $x_i+1$ for some $i$. Since all of the points to the right have been colored, we have that in particular all of the points $x$ with $x_i < x < x_i+k_i$ except for this one have been colored. Therefore there is a unique choice for the color of $x_i+1$ that makes the number of red points $x$ with $x_i < x < x_i+k_i$ even. Inductively apply this argument for the remaining points, always taking the rightmost uncolored point.

Now suppose the first two conditions are satisfied. We apply Lemma~\ref{L-CatProb} to determine a lower bound for the probability that the third condition is met. To this end, for each $1 \leq i \leq s$ let $2r_i$ be the number of red points $x$ with $x_i < x < x_i+k_i$ that do not satisfy $x_j \leq x \leq x_j + k_j$ for any $j \neq i$, and let $2r_0$ be the number of red points that have not been counted for any of the $r_i$ and is not of the form $x_i$ or $x_i+k_i$. Define $b_0, b_1, \ldots, b_t$ similarly. Let $R_n$ and $B_n$ denote the total number of red and blue points respectively. Note that for any $1 \leq i \leq s$ we have $2r_i \leq k_i$, hence in particular $r_i \leq k_i$. Now applying the aforementioned lemma we find that
\begin{align*}
\Pr[A(\b{x},\b{k},\b{y},\b{l})] &\geq 2^{-3(s+t)} \cdot \alpha_s \sideset{}{'}\prod \frac{R_n^{3/2}}{r_i^{3/2}} \cdot \alpha_t \sideset{}{'}\prod \frac{B_n^{3/2}}{b_j^{3/2}} \\
    &\geq \alpha_{s,t} \prod_{i=0}^s \frac{R_n^{3/2}}{\max(r_i,1)^{3/2}} \cdot \prod_{j=0}^t \frac{B_n^{3/2}}{\max(b_j,1)^{3/2}} \\
    &\geq \alpha_{s,t} \prod_{i=1}^s k_i^{-3/2} \prod_{j=1}^t \ell_j^{-3/2},
\end{align*}
where we used that $R_n \geq \max(r_0,1)$, $B_n \geq \max(b_0,1)$, $\max(r_i,1) \leq k_i$ and $\max(b_j,1) \leq \ell_j$.
\end{proof}

We are now ready to prove the lower bound of Theorem~\ref{T-induced}.  In fact, we will give a lower bound for any Catlan-pair graph regardless of whether it is connected or not.

\begin{prop}\label{P-inducedLower}
		Let $H$ be a Catalan-pair graph on $v$ vertices with $i$ isolated vertices and $m$ isolated edges. Then
		\[
		\E[N^*_H(CP_n)]=\Om(n^{\f{v+i}{2}}(\log n)^m).
		\]
\end{prop}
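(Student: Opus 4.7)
The plan is to lower-bound $\E[N_H^*(CP_n)]$ by summing the probabilities of a carefully chosen family of templates for $H$, factoring the estimate across its connected components. Write $H = H_1 \sqcup \cdots \sqcup H_r$, and order the components so that $H_1,\ldots,H_i$ are the isolated vertices, $H_{i+1},\ldots,H_{i+m}$ are the isolated edges, and $H_{i+m+1},\ldots,H_r$ are the remaining components (each on $v_j \geq 3$ vertices). Partition $[1,2n]$ into $r$ disjoint consecutive intervals $I_1,\ldots,I_r$, each of size $\Theta(n)$ and separated from its neighbors by a two-point gap. We consider templates for $H$ in which each $H_j$ is realized by arcs whose $2v_j$ endpoints all lie in $I_j$, arranged in the relative order of a fixed canonical representative of $H_j$.

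Because the intervals are disjoint, arcs from different components cannot interlace and so contribute no edges between components; hence the induced subgraph on the chosen arcs is exactly $H$. The two-point gaps ensure that the combined quadruple satisfies the separation condition of goodness, while the non-crossing condition is inherited from each within-component template being good. So Lemma~\ref{L-GenLowProb} applies to the combined quadruple, and its bound factors across the components:
\[
\E[N_H^*(CP_n)] \;\geq\; \alpha \prod_{j=1}^r \left(\sum_{T_j} \prod_{a \in T_j} \mathrm{span}(a)^{-3/2}\right),
\]
where $T_j$ ranges over good within-interval templates for $H_j$ and $\mathrm{span}(a)$ denotes the endpoint difference of the arc $a$.

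For each component we now bound the local sum from below. For an isolated vertex ($v_j=1$), take the single-arc template $(x,x+2)$ with $x \in I_j$: every such arc contributes a constant $\alpha_{1,0} \cdot 2^{-3/2}$, giving $\Omega(|I_j|) = \Omega(n)$. For an isolated edge ($v_j=2$), the local sum is precisely the edge sum of Proposition~\ref{prop-edgeasym} restricted to $I_j$; parts~(ii) and~(iii) of that proposition then give $\Omega(|I_j|\log|I_j|) = \Omega(n\log n)$. For a connected component with $v_j \geq 3$ vertices, parameterize the $2v_j$ endpoints by their leftmost position together with the $2v_j - 1$ consecutive spacings $d_1,\ldots,d_{2v_j-1}$; restricting each $d_i$ to a band $[c_1|I_j|, c_2|I_j|]$ with $(2v_j-1)c_2 < 1$ retains a constant fraction of the $\Theta(|I_j|^{2v_j})$ possible placements, with every arc of span $\Theta(|I_j|)$ (and so automatic goodness for large $n$). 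Each such placement contributes $\Omega(|I_j|^{-3v_j/2})$ to the local sum, so the component's total contribution is $\Omega(|I_j|^{v_j/2}) = \Omega(n^{v_j/2})$.

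Multiplying the local bounds yields $\Omega\bigl(n^i \cdot (n\log n)^m \cdot \prod_{j>i+m} n^{v_j/2}\bigr) = \Omega\bigl(n^{(v+i)/2}(\log n)^m\bigr)$, where we used $v = i + 2m + \sum_{j>i+m} v_j$. The main technical obstacle is the $v_j \geq 3$ case, where one must verify that (a) the canonical arc pattern of $H_j$ is preserved when its endpoints are rescaled in an order-preserving way (which holds since non-crossing is a combinatorial property of the endpoint order), and (b) the gap-banding restriction really retains a constant fraction of all orderings of $2v_j$ points in $I_j$, which is an elementary volume computation in $\mathbb{R}^{2v_j-1}$.
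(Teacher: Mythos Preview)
Your approach is correct and close in spirit to the paper's, but differs in organization and in the treatment of isolated vertices. Both arguments partition the baseline into disjoint blocks and apply Lemma~\ref{L-GenLowProb} to the combined good quadruple, so that the lower bound factors across connected components; both handle components on $\ge 3$ vertices by a blowup yielding $\Theta(n^{2v_j})$ templates each of weight $\Omega(n^{-3v_j/2})$, and both handle isolated edges via the computation $\sum_{k\le \ell} k^{-1/2}\ell^{-3/2}=\Omega(\log n)$. The paper, however, does not place template arcs for the isolated vertices at all: after confining $H'':=H\setminus\{\text{isolated vertices}\}$ to $[1,n/2]$, it observes deterministically that at least $n/4$ of the remaining arcs are of a single color, and any $i$ of them together with a copy of $H''$ form an induced copy of $H$; this gives $N^*_H\ge \binom{n/4}{i}\,N'^*_{H''}$ without any probability estimate for the isolated-vertex part. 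Your route instead inserts explicit short arcs $(x,x+2)$ and sums, which also produces the factor $\Omega(n)^i$. One small point: your appeal to Proposition~\ref{prop-edgeasym}(ii)--(iii) for the isolated-edge factor is formally stated for $[1,2n]$, so you need the trivial observation that the proof of the lower bound there works verbatim in any interval of length $\Theta(n)$; the paper avoids this by writing out the edge-template construction directly.
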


	\begin{proof}
		We will prove this by first showing that the result holds for $m=i=0$, then for $i=0$, and finally for arbitrary $m$ and $i$.  We note that one can prove the most general case without first going through the other two cases, but this would decrease the readability of the proof.
		
		First assume $m=i=0$, and let $q_H$ be any quadruple representing $H$.  Our goal will be to find a large number of ``blowups'' of $q_H$.  Let $c\ge 4v$ be a fixed constant, and let \[P_j:=\{1+(j-1)\floor{n/c},2+(j-1)\floor{n/c},\ldots,-1+j\floor{n/c}\},\ \]\[P:=P_1\times \cdots \times P_{2v}.\]  Given $p=(p_1,\ldots,p_{2v})\in P$, we will define a quadruple $q_c(p)$ as follows.  If in $q_H$ we have $x_j=a$ and $x_j+k_j=b$, then in $q_c(p)$ we let $x_j=p_a$ and $x_j+k_j=p_b$, and we similarly define $y_j$ and $y_j+\ell_j$ to correspond to the bottom $j$th arc of $q_H$. We note that the reason we force all the points of the left of $2v \lfloor n/c \rfloor \leq n/2$ is to make sure that in the general case we have enough space left to place or find arcs yielding the isolated edges and vertices.
		
		We claim that $q_c(p)$ is a good quadruple that represents $H$ for any $p\in P$.  First observe that the points of $q_c(p)$ have the same relative order as the points of $q_H$, which shows that $q_c(p)$ satisfies the third condition for being a good quadruple (since $q_H$ satisfies this condition), and moreover that $q_c(p)$ represents $H$.  The first condition for being a good quadruple follows since the largest point we could choose for $q_c(p)$ is $-1+2v\floor{n/c}\le n/2$ since $c\ge 4v$, and the second condition follows since $|\max P_j-\min P_k|\ge 2$ for all $j,k$ by the way we defined these sets.  This proves our claim.
		
		Now let $Q_H(c)$ denote the set of all $q_c(p)$ with $p\in P$.  Observe that \[|Q_H(c)|= (\floor{n/c}-1)^{2v}\ge (2c)^{-2v} n^{2v}\] for $n$ sufficiently large.  Also observe that since $k_j,\ell_j\le 2n$ for all $j$,  Lemma~\ref{L-GenLowProb} gives that $\Pr[A(\b{x},\b{k},\b{y},\b{l})]\ge \al_v n^{-3v/2}$ for all $(\b{x},\b{k},\b{y},\b{l})\in Q_H(c)$, where $\al_v:=2^{-3v/2}\max_{s+t=v} \al_{s,t}$.  In particular, we have that \[\E[N_*(H)]\ge \sum_{(\b{x},\b{k},\b{y},\b{l})\in Q_H(4v)} \Pr[A(\b{x},\b{k},\b{y},\b{l})]\ge (8v)^{-2v}n^{2v}\cdot \al_v n^{-3v/2}=\Om(n^{v/2}).\]
		
		Now assume that $i=0$ and let $c=4m+4v$.  We will say that two vectors $\b{k},\b{l}$ each of length $m$ are nice if we have $4\le k_j\le \ell_j\le \floor{n/c}$ for all $j$.  Let $Q_c(\b{k},\b{l})$ denote the set of all quadruples $(\b{x},\b{k},\b{y},\b{l})$ such that
		\begin{align*}
		1+(2j-2+2v)\floor{n/c} \le\ &x_j \le-1+ (2j-1+2v)\floor{n/c},\\
		x_j+2 \le\ &y_j \le x_j+k_j-2,
		\end{align*}
		We claim that each quadruple of $Q_c(\b{k},\b{l})$ is good whenever $\b{k},\b{l}$ is nice.  The first condition follows since the largest point we pick is $y_m+\ell_m\le -1+(2m+2v)\floor{n/c}\le \f{n}{2}$ since $c=4m+4v$.  Similarly one can verify that \[x_j\le y_j-2\le x_j+k_j-4\le y_j+\ell_j-6\le x_{j+1}-8,\]
		where the first two inequalities follow from $x_j+2\le y_j\le x_j+k_j-2$, the third inequality from $\ell_j\ge k_j$ and $y_j\ge x_j+2$, and the last inequality from $y_j+\ell_j\le -1+(2j+2v)\floor{n/c}\le x_{j+1}-2$.  This shows that the second and third conditions of being a good quadruple are satisfied, proving the claim.   We also note that, for $n$ sufficiently large, \[|Q_c(\b{k},\b{l})|= (\floor{n/c}-1)^{m}\prod_{j=1}^m(k_j-3)\ge (8c)^{-m}n^m\prod_{j=1}^m k_j,\] where we've used that $k_j-3\ge \quart k_j$ for all $j$.
		
		Now let $H'$ denote $H$ after deleting its $m$ isolated edges.  For $\b{k},\b{l}$ nice, let $Q(\b{k},\b{l})$ be the set of all quadruples $q$ which are obtained by taking the union of the arcs of some $q_1\in Q_{H'}(c)$ and some $q_2\in Q_c(\b{k},\b{l})$.  We claim that every such $q$ is good.  Indeed, the first condition holds since it holds for both $q_1$ and $q_2$.  The second condition holds since it holds restricted to any two points of $q_1$ or $q_2$, and because the largest point of $q_1$ is at most $-1+2v\floor{n/c}$ while the smallest point of $q_2$ is at least $1+2v\floor{n/c}$.  This also implies that the third condition is satisfied since it is satisfied for both $q_1$ and $q_2$, so the claim is proven.
		
		Observe that each quadruple $(\b{x},\b{k},\b{y},\b{l})\in Q(\b{k},\b{l})$ represents $H$ and that \[\Pr[A(\b{x},\b{k},\b{y},\b{l})]\ge \al_{v} n^{-3(v-2m)/2} \prod_{j=1}^m k_j^{-3/2}\ell_j^{-3/2}\] by Lemma~\ref{L-GenLowProb}.  Also observe that our previous work shows that \[|Q(\b{k},\b{l})|=|Q_{H'}(c)|\cdot |Q_c(\b{k},\b{l})|\ge \be_{c} n^{2v-3e}\prod_{j=1}^m k_j\] for some absolute constant $\be_{c}$.  We conclude that
		\[
		\E[N_*(H)]\ge \sum_{\b{k},\b{l}\tr{ nice}}\sum_{(\b{x},\b{k},\b{y},\b{l})\in Q(\b{k},\b{l})} \Pr[A(\b{x},\b{k},\b{y},\b{l})]\ge \sum_{\b{k},\b{l}\tr{ nice}} \al_{v}\be_{c} n^{v/2} \prod_{j=1}^m k_j^{-1/2}\ell_j^{-3/2}\]\[=\al_{v}\be_{c} n^{v/2}\l(\sum_{4\le k\le \ell\le \floor{n/c}}k^{-1/2}\ell^{-3/2}\r)^m=\Om(n^{v/2}(\log n)^m),
		\]
		where we've used the fact that the above sum is of order $\Om(\log n)$.
		
		Now let $H$ be an arbitrary Catalan-pair graph.  Let $H''$ denote $H$ with its isolated vertices removed, and let $N'_*(H)$ denote the number of induced copies of $H''$ in $CP_n$ which have all of its points in the interval $[1,n/2]$.  Note that implicitly our above argument shows that $\E[N'_*(H)]=\Om(n^{(v-i)/2}(\log n)^m)$.
		
		We claim that, deterministically, $N_*(H)\ge N'_*(H)\cdot {n/4\choose i}$.  Indeed, observe that there are at most $n/2$ arcs which have an endpoint in the interval $[1,n/2]$, and hence there exists at least $n/2$ arcs with both endpoints not in this interval.  Let $A_R$ denote the set of these arcs that are colored red, and similarly define $A_B$.  One of these sets must have size at least $n/4$, so let $C$ be such that $|A_C|\ge n/4$.
		
		We claim that any induced copy of $H''$ contained in $[1,n/2]$ together with $i$ arcs of $A_C$ is an induced copy of $H$.  Indeed, by definition no arc in $A_C$ can interlace with any arc of the $H''$, and none of the $A_C$ arcs interlace with one another since they are all colored the same way.  Thus the graph that these arcs induce will be $H''$ together with $i$ isolated vertices, which is precisely $H$.  We conclude that
		\[
		N_*(H)\ge {|A_C|\choose i}\cdot N'_*(H'')\ge {n/4\choose i} N'_*(H'').
		\]
		The result now follows by taking expectations of the above inequality and using that $\E[N'_*(H)]=\Om(n^{(v-i)/2}(\log n)^m)$.
	\end{proof}

\subsection{An upper bound for the number of induced subgraphs} \label{subsec-upperbound}
A key step in finding the expected number of edges was to bound the number of good quadruples $(x,k,y,\ell)$ for given $k$ and $\ell$. Therefore, for general $H$ we would like to bound the number of valid quadruples $(\bf{x},\bf{y},\bf{k},\bf{l})$ for given $\bf{k}$ and $\bf{l}$. One of the reasons this is more complicated in the general setting is that $H$ might have several different representatives. However, since there are only finitely many representatives, it suffices to prove the desired bounds for each of them separately.

In order to do this we introduce some new notation. Let $H$ be a Catalan-pair graph on $v$ vertices and let $q = (\bar{\b{x}},\bar{\b{k}},\bar{\b{y}},\bar{\b{l}})$ be a quadruple with $\bar{\b{x}}$ and $\bar{\b{y}}$ increasing such that the following conditions are satisfied.
\begin{itemize}
	\item The lengths of $\bar{\b{x}}$ and $\bar{\b{y}}$ add to $v$.
	\item We have $\{\bar{x_i}\} \cup \{\bar{x_i}+\bar{k_i}\} \cup \{\bar{y_j}\} \cup \{\bar{y_j}+\bar{\ell_j}\} = \{1,2,\ldots,2v\}$.
	\item The quadruple $q$ is valid and the resulting Catalan-pair graph is isomorphic to $H$.
\end{itemize}
We say that a valid quadruple $(\b{x},\b{k},\b{y},\b{l})$ \emph{represents $H$ by $q$} if the relative order of the $x_i$, $x_i+k_i$, $y_j$ and $y_j+\ell_j$ coincides with the relative order of $\bar{x_i}$, $\bar{x_i}+\bar{k_i}$, $\bar{y_j}$ and $\bar{y_j}+\bar{\ell_j}$. Note that the $f_i$ and $g_j$ as defined in the beginning of Section~\ref{sec-edges} depend solely on $\b{k}$, $\b{l}$, and $q$, and are independent of the exact values of $\b{x}$ and $\b{y}$.

We wish to prove a lemma that upper bounds the number of valid quadruples for given $\b{k}$, $\b{l}$, and representing quadruple $q$. From now on we assume that $H$ is a connected Catalan-pair graph on $v \geq 3$ vertices that has $s$ and $t$ vertices in its bipartite components respectively. Additionally, let $q$ be a quadruple as above where $\bar{\b{x}}$ and $\bar{\b{y}}$ have length $s$ and $t$ respectively.

When $\b{k}$ and $\b{l}$ are known we denote by $(x_i)$ the arc $(x_i,x_i+k_i)$. For a valid quadruple $(\b{x},\b{k},\b{y},\b{l})$ we say that $(x_i)$ is a \emph{maximal arc} if there is no $j$ with $x_j < x_i < x_i+k_i < x_j+k_j$.  We say that arc $(x_i)$ covers arc $(x_j)$ if we have $x_i < x_j < x_j+k_j < x_i+k_i$ and there is no $i'$ with $x_i < x_{i'} < x_j < x_j+k_j < x_{i'}+k_{i'} < x_i+k_i$. Note that each arc is either maximal, or has a unique arc that covers it. However, a single arc can cover multiple arcs.

\begin{lem} \label{L-validquadruples-weak}
	Let $\b{k}$ and $\b{l}$ be $s$ and $t$-tuples of positive integers for which there exists a valid quadruple $(\b{x},\b{k},\b{y},\b{l})$ representing $H$ by $q$.  The number of such quadruples is at most
	\[
	(\min\{f_0,g_0\}+2v+1) \cdot \prod_{\substack{i \ge 1\\ i\ne i_0}} (f_i+2v+1) \cdot \prod_{j\ge 1} (g_j+2v+1)
	\]
	for any $i_0\ne 0$, and it also at most
	\[
		(f_0+2v+1)(g_0+2v+1) \cdot \prod_{\substack{i \ge 1\\ i\ne i_0}} (f_i+2v+1) \cdot \prod_{\substack{j \ge 1\\ j\ne j_0}} (g_j+2v+1)
	\]
	for any $i_0,j_0\ne 0$.
\end{lem}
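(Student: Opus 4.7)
The plan is to count valid quadruples by parameterizing each one via its gap sequence and bounding the number of solutions to a resulting system of linear equations.

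Setup. Fix $\b{k}$, $\b{l}$, and $q$. A valid quadruple is uniquely determined by the positions of the $2v$ specified endpoints $e_1 < e_2 < \cdots < e_{2v}$, or equivalently by the gap sequence $h_\ell := e_{\ell+1} - e_\ell - 1$ (with conventions $e_0 = 0$ and $e_{2v+1} = 2n+1$), a sequence of $2v+1$ non-negative integers. The arc-length constraints, combined with the relative-order information from $q$, translate into pocket-sum equations: for each top pocket $i \in \{0,1,\ldots,s\}$, the gaps $h_\ell$ lying in that pocket sum to $f_i$, and analogously for each bottom pocket $j$ with sum $g_j$. The total sum $\sum_\ell h_\ell = 2n - 2v$ is then automatic, since $\sum_i f_i = \sum_j g_j = 2n - 2v$. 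So the number of valid quadruples is exactly the number of non-negative integer solutions to this system.

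Free-variable decomposition. The system has $2v+1$ variables and only $v+1$ independent constraints, leaving $v$ degrees of freedom. I would single out $v$ distinguished gaps whose values determine the remaining $v+1$ gaps uniquely. For the first bound, with a chosen $i_0 \in \{1,\ldots,s\}$, the distinguished set would consist of: one gap in each top pocket $i \in \{1,\ldots,s\}\setminus\{i_0\}$; one gap in each bottom pocket $j \in \{1,\ldots,t\}$; and one \emph{special} distinguished gap that lies in both outer pockets (a gap that sits outside every top arc and outside every bottom arc). Any gap in top pocket $i$ satisfies $h_\ell \leq f_i$, since it is one nonnegative summand in the equation with right-hand side $f_i$, so each such distinguished gap contributes at most $f_i + 1 \leq f_i + 2v + 1$ choices; similarly for bottom-pocket gaps, and the special gap is bounded by $\min(f_0,g_0) + 1$ because it is at most both $f_0$ and $g_0$. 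The product is the right-hand side of the first claimed inequality. For the second bound, I would replace the single special gap by two distinguished gaps, one in the outer top pocket and one in the outer bottom pocket, producing the factor $(f_0+2v+1)(g_0+2v+1)$ in place of $(\min(f_0,g_0)+2v+1)$.

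Main obstacle. The crux is justifying that such a distinguished set can always be selected and that the remaining $v+1$ gaps are uniquely determined by the pocket-sum equations. This reduces to a combinatorial statement about the bipartite \emph{pocket graph} on the $s+t+2$ pockets (top pockets on one side, bottom pockets on the other, with each gap contributing an edge between the top pocket and bottom pocket containing it): one needs a spanning tree of this graph consisting entirely of non-distinguished gaps, so that the pocket-sum equations admit triangular elimination after the distinguished variables are set. Connectivity of the pocket graph, which is essentially forced by $H$ being connected together with $v \geq 3$, is the key input that makes this spanning tree available. The small slack of $2v$ in each factor (versus the tighter $+1$) is there to absorb ambiguities arising from boundary effects at $e_0$ and $e_{2v+1}$ and from the limited freedom in choosing which gap within a pocket is distinguished.
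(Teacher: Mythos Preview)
Your gap-sequence parameterization is a legitimate and elegant reformulation: encoding a valid quadruple by the $2v+1$ inter-endpoint gaps $(h_0,\ldots,h_{2v})$ and observing that the arc-length constraints become pocket-sum equations is correct, and the dimension count ($2v+1$ variables, $v+1$ independent linear equations, hence $v$ degrees of freedom) is right. However, the proof is incomplete precisely where you flag the ``main obstacle,'' and the obstacle is more serious than you suggest.

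The issue is twofold. First, you need the $v$ distinguished gaps to be \emph{distinct}: if the gap you pick ``for $T_i$'' coincides with the gap you pick ``for $B_j$,'' you have set only $v-1$ variables and the remaining $v+2$ are under-determined. So you need a system of distinct representatives for the pocket families, which you do not verify. Second, and more restrictively, you need the remaining $v+1$ gaps to form a spanning tree of the pocket graph; merely having the pocket graph connected (which, incidentally, holds automatically---consecutive gap-edges share a vertex, regardless of whether $H$ is connected) is not enough to guarantee that a spanning tree with your particular co-tree structure exists. You assert that connectivity of $H$ supplies this, but you neither explain the mechanism nor prove it, and since the pocket graph is connected even for disconnected $H$, it is not clear where in your framework the hypothesis on $H$ actually enters. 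Your remark about the $+2v$ slack absorbing ``ambiguities'' is also not right: in your setup each distinguished gap is genuinely bounded by $f_i+1$ (resp.\ $g_j+1$, $\min(f_0,g_0)+1$), so the slack is irrelevant to your argument and does not rescue the spanning-tree selection.

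By contrast, the paper's proof avoids this difficulty entirely by a sequential arc-placement argument. One first places a nested chain of top arcs terminating at $(x_{i_0})$, bounding the choices for the outermost arc by $f_0+2v+1$ and each subsequent one by $f_{i_p}+2v+1$. Then, crucially, connectivity of $H$ guarantees that at every later step some not-yet-placed arc interlaces with an already-placed arc; this interlacing furnishes a known reference point, so the new arc's position is determined up to at most $g_j+2v+1$ (or $f_i+2v+1$) choices. The $+2v$ here has a concrete meaning: it accounts for the at most $2v$ endpoint positions that may lie in the relevant region. This inductive placement is effectively constructing your spanning tree one edge at a time, but it does so in a way that makes the role of $H$'s connectivity transparent and avoids any appeal to an unproved combinatorial selection lemma.
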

\begin{proof}
	In order to prove the first bound we first consider the case that $f_0=\min\{f_0,g_0\}$. Let $i_0, i_1, \ldots, i_d$ be such that $(x_{i_d})$ is maximal and such that $(x_{i_p})$ covers $(x_{i_{p-1}})$ for all $1 \leq p \leq d$. We claim that there are at most
	\[
	(f_0 + 2v + 1) \cdot \prod_{p=1}^d (f_{i_p} + 2v + 1)
	\]
	ways to choose $x_{i_d}$, $x_{i_{d-1}}$, $\ldots$, $x_{i_1}$, $x_{i_0}$. Indeed, since we specified $q$, $\b{k}$, and $\b{l}$ (and hence the $f_i$ and $g_j$), we know how many points $m < x_{i_d}$ are of the form $m=y_j$, $m=y_{j}+\ell_j$, or which satisfy $x_{i'}\le m\le x_{i'}+k_{i'}$ for some $i'$.  By definition of $f_0$, we know that there are at most $f_0 \leq f_0 + 2v$ points outside of arc $x_{i_d}$ that are not of this form. We can choose amongst these at most $f_0+2v$ points how many lie to the left of $x_{i_d}$, and such a choice uniquely determines $x_{i_d}$ (since we now know the total number of points which lie to the left of $x_{i_d}$).  We conclude that we can place $x_{i_d}$ in at most $f_0+2v+1$ ways.  A similar argument shows that there are at most $f_{i_j}+2v+1$ ways to place each $x_{i_j-1}$ given that $x_{i_j}$ has already been placed, where now $f_{i_j}$ plays the role of $f_0$ by restricting our attention to points of the form $x_{i_j}<m<x_{i_j}+k_{i_j}$.  This completes the proof of the claim.
	
	Now suppose that we have inductively placed some (proper) subset of the arcs. Let $Z$ denote the set of arcs $z$ which have not been placed and whose endpoints alternate with some arc that has already been placed.  Since $H$ is connected, $Z\ne \emptyset$. Since $Z$ is finite, let $z\in Z$ be such that $z$ covers no other $z'\in Z$. Without loss of generality, assume that $z$ is of the form $(y_j)$. Then, we are in one of the following situations.
	\begin{itemize}
		\item[1.] The arc $(y_j)$ is minimal.
		\item[2.] The arc $(y_j)$ is not minimal and all the arcs covered by $(y_j)$ have been placed already.
		\item[3.] The arc $(y_j)$ is not minimal, at least one arc covered by $(y_j)$ has not been placed and any such arc does not alternate endpoints with any of the arcs placed so far.
	\end{itemize}
	We claim that in all cases there are at most $g_j + 2v + 1$ ways to choose $y_j$.
	\begin{itemize}
		\item[1.] Note that in this case there are at most $g_j + 2v$ points between $y_j$ and $y_j + \ell_j$. Indeed, there are $g_j$ points that are not of the form $x_i$ or $x_i+k_i$ and there are at most $2v$ points that are of this form. By assumption, the endpoints of $(y_j)$ alternate with the endpoints of some $(x_i)$. Consider the case where $y_j < x_i < y_j + \ell_j$.  Then the number of points between $y_j$ and $x_i$ is at most $g_j + 2v$, else there would be too many points between $y_j$ and $y_j + \ell_j$. Note that this number of intermediate points uniquely determines $y_j$ since $x_i$ is known. Therefore we have at most $g_j + 2v + 1$ ways to choose $y_j$.
		\item[2.] In this case we can follow a similar argument as used when choosing $x_{i_d}$. Note that since the $y_j$ are increasing, $y_{j+1}$ is the leftmost arc that is covered by $(y_j)$. By definition of $g_j$, there are at most $g_j + 2v$ points between $y_j$ and $y_{j+1}$ and the value of $y_j$ is known, so we again have at most $g_j + 2v + 1$ ways to choose $y_j$.
		\item[3.] In this case, suppose that $(y_j)$ intersects $(x_i)$ and that we have $y_j < x_i < y_j + \ell_j$. We again count the possible number of points between $y_j$ and $x_i$. As before, there are between $0$ and $g_j + 2v$ such points that do not lie below an arc covered by $(y_j)$. We claim that we know how many of the other points lie between $y_j$ and $x_i$, which again yields that there are at most $g_j + 2v + 1$ options for $y_j$.
		
		Indeed, consider an arc $(y_{j'})$ that is covered by $(y_j)$. If $(y_{j'})$ has not been placed, then it does not alternate endpoints with $(x_i)$ by assumption. Thus this arc either lies completely between $y_j$ and $x_i$ or completely between $x_i$ and $y_j + \ell_j$, and since we specified the quadruple $q$ representing $H$, we know which of these two cases happens.
		Thus we know exactly how many such points lie between $y_j$ and $x_i$. Now if $(y_{j'})$ has been placed, we know all of $y_{j'}$, $y_{j'} + \ell_{j'}$ and $x_i$, so clearly we also know how many of the points between $y_{j'}$ and $y_{j'} + \ell_{j'}$ lie to the left of $x_i$.
	\end{itemize}
	Inductively, we can place the arcs one by one (in the order described above) and note that in this process we get the product of all of the numbers of the form $f_i + 2v + 1$ and $g_j + 2v + 1$ except for the numbers $f_{i_0} + 2v + 1$ and $g_0 + 2m + 1$, establishing the first bound when $f_0=\min\{f_0,g_0\}$.
	
	Now assume that $g_0=\min\{f_0,g_0\}$.  Since $H$ is connected, there exists some $j_0\ne 0$ such that $(y_{j_0})$ and $(x_{i_0})$  interlace, and moreover we can choose $j_0$ such that it does not cover any $(y_{j'})$ that also interlaces with $(x_{i_0})$.  Let $j_0, j_1, \ldots, j_e$ be such that $(y_{j_e})$ is maximal and such that $(y_{j_p})$ covers $(x_{j_{p-1}})$ for all $1 \leq p \leq e$.  By the same reasoning as above, there are at most $(g_0 + 2v + 1) \cdot \prod_{p=1}^e (g_{i_p} + 2v + 1)$
	ways to choose $y_{j_e}$, $y_{j_{e-1}}$, $\ldots$, $y_{j_1}$, $y_{j_0}$.  We now place the remaining arcs $Z$ as we did before.  We use almost all of the same bounds as before, except we now use the bound $g_{j_0}+2v+1$ instead of $f_{i_0}+2v+1$ when we place $(x_{i_0})$.  We are justified in using this bound since, by assumption of $(y_{j_0})$ not covering any arc that interlaces with $(x_{i_0})$, one of the endpoints of $(x_{i_0})$ must be one of the points counted by $g_{j_0}$.  Ultimately this gives us the product of all of the numbers of the form $f_i + 2v + 1$ and $g_j + 2v + 1$ except for the numbers $f_{i_0} + 2v + 1$ and $f_0 + 2m + 1$ as desired.
	
	To prove the final bound, let $i_0, i_1, \ldots, i_d$ be such that $(x_{i_d})$ is maximal and such that $(x_{i_p})$ covers $(x_{i_{p-1}})$ for all $1 \leq p \leq d$, and similarly define $j_0,j_1,\ldots,j_e$.  By reasoning similar to that above, the number of ways we can place all of these arcs down in at most
	\[
	(f_0 + 2v + 1)(g_0+2v+1) \cdot \prod_{p=1}^d (f_{i_p} + 2v + 1)\cdot \prod_{p=1}^e (g_{i_p} + 2v + 1).
	\]
	We then place the remaining arcs and use the same bounds as we did before, and this ultimately gives us a product of all of the terms except for $f_{i_0}+2v+1$ and $g_{j_0}+2v+1$.
\end{proof}

\begin{prop} \label{P-validquadruples}
	Let $\b{k}$ and $\b{l}$ be $s$ and $t$-tuples of positive integers for which there exists a valid quadruple $(\b{x},\b{k},\b{y},\b{l})$ representing $H$ by $q$. Then the number of such quadruples is at most
	\[
	(h_1+2v+1) \cdot (h_2+2v+1)\cdot (h_4+2v+1) \cdot \prod_{i=6}^{v+2} (h_i+2v+1),
	\]
	where $h_1 \leq h_2 \leq \ldots \leq h_{v+1} \leq h_{v+2}$ are $f_0, f_1, \ldots, f_s$ and $g_0, g_1, \ldots, g_t$ written in increasing order.
\end{prop}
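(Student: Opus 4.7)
The plan is to deduce Proposition~\ref{P-validquadruples} from Lemma~\ref{L-validquadruples-weak} by carefully choosing which two elements of $F:=\{f_0,\ldots,f_s,g_0,\ldots,g_t\}$ to omit from the product. Writing $a=\max\{f_0,g_0\}$ and $b=\min\{f_0,g_0\}$, the admissible omissions supplied by the lemma (together with the top--bottom symmetric version of its first bound, which omits a non-zero $g_j$ in place of a non-zero $f_i$) are exactly the pairs $\{a,x\}$ with $x\in F\setminus\{f_0,g_0\}$, or $\{f_i,g_j\}$ with $i,j\ge 1$. Since the target bound equals $\prod_{i=1}^{v+2}(h_i+2v+1)$ divided by $(h_3+2v+1)(h_5+2v+1)$, it suffices to exhibit an admissible omission $\{p,q\}$ with (values) $p\ge h_3$ and $q\ge h_5$.

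I would first handle the case $a\ge h_5$. Here the two subsets $\{h_3,\ldots,h_{v+2}\}$ and $F\setminus\{f_0,g_0\}$ of $F$ both have size $v$ and live inside the $(v+2)$-set $F$, so they intersect in at least $v-2\ge 1$ positions; any such position $x$ satisfies $x\ge h_3$ and lies outside $\{f_0,g_0\}$, so $\{a,x\}$ is admissible and delivers the required product. Assume instead $a<h_5$. Then $f_0,g_0<h_5$, forcing $\{h_5,\ldots,h_{v+2}\}\subset\{f_1,\ldots,f_s,g_1,\ldots,g_t\}$. If $\{h_5,\ldots,h_{v+2}\}$ contains both a non-zero $f_i$ and a non-zero $g_j$, I take that pair via the second bound of the lemma to obtain product $\ge h_5^2\ge h_3h_5$.

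The real obstacle, which I anticipate requires the most care, is the residual subcase in which $\{h_5,\ldots,h_{v+2}\}$ sits entirely on a single side; by top--bottom symmetry I may assume all of them are non-zero $f_i$'s, so in particular $h_{v+2}=f_{i^*}$ for some $i^*\ge 1$. If $a\ge h_3$, then the admissible pair $\{a,f_{i^*}\}$ gives product at least $h_3h_5$ and we are done. Otherwise $a<h_3$, and since $b\le a<h_3$ we get two distinct positions $f_0,g_0$ of $F$ carrying values strictly below $h_3$; but at most two positions of $F$ can be strictly below $h_3$, namely those occupying $h_1$ and $h_2$ in the sorted list, so $\{f_0,g_0\}$ coincides as a multiset with $\{h_1,h_2\}$ and every $f_i$ or $g_j$ with positive index has value at least $h_3$. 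Because $H$ is connected and bipartite with $v\ge 3$ we have $t\ge 1$, so I can pick any non-zero $g_{j^*}$ and apply the second bound to $\{f_{i^*},g_{j^*}\}$, obtaining product at least $h_5\cdot h_3$ and closing the argument. The delicate point is precisely this multiset identification $\{f_0,g_0\}=\{h_1,h_2\}$, which is what rescues the case where $a$ is too small to pair directly with $h_{v+2}$.
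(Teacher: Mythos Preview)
Your argument is correct and follows essentially the same strategy as the paper: use Lemma~\ref{L-validquadruples-weak} (together with its top--bottom symmetric form) to omit two terms whose product is at least $(h_3+2v+1)(h_5+2v+1)$. The paper organizes the case analysis slightly differently---it first fixes $f_{i_0}=\max_{i,j\ne 0}\{f_i,g_j\}$ and branches directly on whether $\{f_0,g_0\}=\{h_1,h_2\}$, whereas you branch on the size of $a=\max\{f_0,g_0\}$ relative to $h_5$ and then $h_3$---but both routes land on the same key dichotomy, and your final subcase (forcing $\{f_0,g_0\}=\{h_1,h_2\}$ and then pairing $f_{i^*}$ with any $g_{j^*}$) is exactly the paper's Case~2.
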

\begin{proof}
	Without loss of generality we may assume that $f_{i_0}=\max_{i,j\ne 0}\{f_i,g_j\}$.  Observe that $f_{i_0}\ge h_3$ since we assume $v\ge 3$, and further that $f_{i_0}\ge h_5$ if $\max\{f_0,g_0\}\le h_4$.  First assume that $\{f_0,g_0\}\ne \{h_1,h_2\}$.  In this case we apply the first bound of Lemma~\ref{L-validquadruples-weak} with our choice of $i_0$.  This bound consists of the product of all the values $h_i+2v+1$ except for the terms $f_{i_0}+2v+1$ and $\max\{f_0,g_0\}+2v+1$, and in this case we say that our bound ``omits'' the values $f_{i_0}+2v+1$ and $\max\{f_0,g_0\}+2v+1$.  If $\max\{f_0,g_0\}\ge h_5$ then these two terms are at least $h_3+2v+1$ and $h_5+2v+1$.  If $\max\{f_0,g_0\}\le h_4$, then we again omit at least $h_3+2v+1$ and $h_5+2v+1$ since $\{f_0,g_0\}\ne\{h_1,h_2\}$ implies that $\max\{f_0,g_0\} \ge h_3$.  Thus in this case we achieve our desired result.
	
	Now assume that $\{f_0,g_0\}=\{h_1,h_2\}$.  In this case we apply the second bound of Lemma~\ref{L-validquadruples-weak} to $i_0$ and $j_0=1$.  Now we omit only $f_{i_0}+2v+1$ (which is at least $h_5+2v+1$) and $g_1+2v+1$ (which is at least $h_3+2v+1$).  We conclude the result.
\end{proof}

With this proposition we can prove an upper bound on the expected number of induced subgraphs.

\begin{prop}\label{P-inducedUpper}
    Let $H$ be a connected Catalan-pair graph on $v\ge3$ vertices.  Then
    \[
        \E[N_H^*(CP_n)]=O(n^{v/2}).
    \]
\end{prop}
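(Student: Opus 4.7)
The plan is to exploit the bijection $(\b{k},\b{l})\leftrightarrow(f_0,\ldots,f_s,g_0,\ldots,g_t)$ afforded by a fixed representative $q$ of $H$. Since $H$ has only finitely many representatives and each induced copy of $H$ in $CP_n$ yields at most $|\mathrm{Aut}(H)|$ valid quadruples, I will fix one representative $q$ with $\b{x}$ and $\b{y}$ of lengths $s$ and $t$ (so $s+t=v$), and bound the sum of $\Pr[A(\b{x},\b{k},\b{y},\b{l})]$ over all valid quadruples representing $H$ by $q$. As noted just before Proposition~\ref{P-validquadruples}, the numbers $f_i,g_j$ depend only on $q,\b{k},\b{l}$; indeed $k_i=f_i+c_i(q)$ and $\ell_j=g_j+c_j'(q)$ for constants determined by $q$ (one plus the numbers of other arc endpoints forced into the relevant interval by $q$), so the assignment is bijective onto tuples satisfying $\sum_i f_i=\sum_j g_j=2n-2v$.

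Next I will apply Proposition~\ref{P-validquadruples} and Lemma~\ref{L-upperbound} to bound
\[
\sum \Pr[A(\b{x},\b{k},\b{y},\b{l})] \le \sum_{(f_i,g_j)}\Bigl(\prod_{i\in S}(h_i+2v+1)\Bigr)\cdot \beta_{s,t} n^3\,\widetilde{\prod_i} h_i^{-3/2},
\]
where $h_1\le\cdots\le h_{v+2}$ is the sorted list of the $f_i$ and $g_j$, $S=\{1,2,4,6,\ldots,v+2\}$, and the tilde product is over indices $i$ with $h_i\ge T:=16v\log n$. In the main regime where every $h_i\ge T$ the combined factor simplifies, up to constants, to $n^3\prod_{i=1}^{v+2} h_i^{-1/2}\cdot h_3^{-1}h_5^{-1}$, so the task reduces to estimating the purely arithmetic sum $\sum \prod h_i^{-1/2}\,h_3^{-1}h_5^{-1}$.

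The main estimate will then follow by a scaling argument. The labelled sum has $s+t=v$ degrees of freedom (subject to the two simplex constraints $\sum f_i=\sum g_j=2n-2v$), and the integrand is homogeneous of degree $-(v/2+3)$, since there are $v$ factors of $h_i^{-1/2}$ plus two extra $h_i^{-1}$ corrections at the sorted positions $3$ and $5$. Substituting $f_i=n\hat f_i$, $g_j=n\hat g_j$ rewrites the sum as $n^{v-(v/2+3)}=n^{v/2-3}$ times a dimensionless integral over the product of two normalized simplices. That integral is finite; the cleanest way to verify this is to integrate in sorted order, using that the partial exponent sums $\alpha_1+\cdots+\alpha_k$ (with $\alpha_i\in\{1/2,3/2\}$) always stay strictly below $k$, so each singular $h_j^{-3/2}$ at positions $3$ and $5$ is balanced by integrating a strictly smaller $h$-variable. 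Multiplying by the leading $n^3$ then gives the claimed $O(n^{v/2})$ from the main regime.

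The main obstacle will be the boundary regime in which one or more $h_i$ lies below the threshold $T$: there the probability bound loses the corresponding $h_i^{-3/2}$ factor, while the count bound from Proposition~\ref{P-validquadruples} contributes only $h_i+2v+1=O(\log n)$ at that index. A short case analysis over the subset of indices below $T$, together with the observation that forcing any single coordinate to lie in $[0,T]$ cuts the number of available configurations by a factor of roughly $n/T$, shows that the total boundary contribution is $O(n^{v/2}\cdot\mathrm{polylog}(n)/n)=o(n^{v/2})$, completing the proof.
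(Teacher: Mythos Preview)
Your outline is essentially the paper's own approach: fix a representative $q$, combine Proposition~\ref{P-validquadruples} with Lemma~\ref{L-upperbound}, and estimate the resulting sum over gap sizes $h_1\le\cdots\le h_{v+2}$. The paper carries out this last estimate by an explicit case analysis on the position $a$ of $\min\{\max_i f_i,\max_j g_j\}$ in the sorted list and on the number $b$ of gaps below the threshold $T$; your homogeneity/scaling argument is a more conceptual packaging of the same computation.

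A few points need tightening. First, the formula $k_i=f_i+c_i(q)$ is wrong whenever arcs on one side are nested (e.g.\ if $(x_1)$ covers $(x_2)$ then $f_1$ depends on both $k_1$ and $k_2$); what you actually need, and what the paper uses, is only that the map $(\b{k},\b{l})\mapsto(f_i,g_j)$ is an affine bijection for fixed $q$. Second, for the finiteness of the dimensionless integral, the partial-sum criterion $\alpha_1+\cdots+\alpha_k<k$ is correct for the sorted integral, but you must also observe that at most $v$ of the rescaled $\hat h_i$ can simultaneously approach $0$, since $\max_i \hat f_i\ge 2/(s+1)$ and $\max_j \hat g_j\ge 2/(t+1)$ are bounded below on the product of simplices; without this, the two constrained coordinates would add spurious degrees of freedom. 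Third, your boundary estimate is too coarse: forcing one coordinate below $T$ does not save a factor of $n/T$. For instance when $b=1$, the contribution from $h_1$ changes from $\sum_{h_1\ge T} h_1^{-1/2}=O(\sqrt{n})$ to $\sum_{h_1<T}(h_1+2v+1)=O(\log^2 n)$, a saving of order $\sqrt{n}/\mathrm{polylog}(n)$ rather than $n/\mathrm{polylog}(n)$. The conclusion $o(n^{v/2})$ still holds, but the stated mechanism and rate are off; the paper handles this with a short explicit check for each value of $b$.
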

\begin{proof}
	First notice that there are only finitely many valid quadruples $q = (\bar{\b{x}},\bar{\b{k}},\bar{\b{y}},\bar{\b{l}})$ for which $\{\bar{x_i}\} \cup \{\bar{x_i}+\bar{k_i}\} \cup \{\bar{y_j}\} \cup \{\bar{y_j}+\bar{\ell_j}\} = \{1,2,\ldots,2v\}$ and such that the resulting Catalan-pair graph is isomorphic to $H$. Therefore, it suffices to show for each such $q$ that the expected number of induced Catalan-pair graphs of $CP_n$ that is represented by $q$ is $O(n^{v/2})$.
	
	Consider $1 \leq h_1 \leq h_2 \leq \ldots \leq h_{v+1} \leq h_{v+2} \leq 2n$. We claim that the number of pairs $(\b{k},\b{l})$ such that there exist a valid quadruple $(\b{x},\b{k},\b{y},\b{l})$ representing $H$ by $q$ and for which $\{h_i\} = \{f_i\} \cup \{g_j\}$ is at most $(v+2)!$. Indeed, note that since $q$ defines the relative order of all the points, knowing the values of $f_i$ and $g_j$ uniquely determines $\b{k}$ and $\b{l}$. Since there are $(v+2)!$ ways to distribute the $h_i$ over the $f_i$ and $g_j$, there are at most $(v+2)!$ possible pairs $(\b{k},\b{l})$.
	
	Therefore, using Lemma~\ref{L-upperbound} and Proposition~\ref{P-validquadruples} we find that the expected number of induced subgraphs isomorphic to $H$ and represented by $q$ is at most
	\begin{equation}\label{E-GenBound}
	(v+2)! \cdot \sum_h \left( (h_1+2v+1) \cdot (h_2+2v+1) \cdot (h_4+2v+1)\cdot\prod_{i=6}^{v+2} (h_i+2v+1) \cdot \beta_{s,t} n^3 \cdot \widetilde{\prod_i} h_i^{-3/2} \right)
	\end{equation}
	where the sum is over all possible sequences $h = (h_1, h_2, \ldots, h_{v+1}, h_{v+2})$ and $\widetilde{\prod}$ indicates the product over all $i$ with $h_i \geq 16 v \log n$. Note that implicitly this sum is over all possible $(\b{k},\b{l})$, and we will break up this sum into the cases where $\{\max f_i, \max g_j\} = \{h_a,h_{v+2}\}$ for all possible $a$. We will show the desired upper bound of $O(n^{v/2})$ in each of these cases. Note that $\sum f_i = 2n-2v$, so $\max f_i$ is at least linear and is uniquely determined by the other $f_i$.  We first consider $v\ge 5$.
	
	First, assume that $a \geq 6$. In this case, we can take out the factors $(h_a+2v+1) \cdot (h_{v+2}+2v+1) \cdot h_a^{-3/2} \cdot h_{v+2}^{-3/2}$ and note that this is $O(n^{-1})$, by virtue of $h_a, h_{v+2}$ being linear in $n$. Therefore, the remaining part can (up to some large constant) be estimated by
	\begin{equation}\label{E-Bound}
	n^2 \cdot \sum_{h_{v+1} = 1}^{2n} \cdots \sum_{h_{a+1}=1}^{h_{a+2}} \sum_{h_{a-1}=1}^{h_{a+1}} \cdots \sum_{h_1=1}^{h_2} (h_1+2v+1) \cdot (h_2+2v+1) \cdot(h_4+2v+1)\cdot \prod_{\substack{i=6 \\ i \neq a}}^{v+1} (h_i+2v+1)\cdot \widetilde{\prod_i} h_i^{-3/2},
	\end{equation}
	where the last product no longer involves $h_a$ nor $h_{v+2}$. Note that this expression is actually independent of $a$, so for simplicity we assume that $a = v+1$. Let $b$ be the number of $h_i$ for which $h_i \leq 16 v \log n$. First consider the case where $b = 0$. In this case, \eqref{E-Bound} is of the order
	\[
	n^2 \cdot \sum_{h_v = 1}^{2n} h_v^{-1/2} \sum_{h_{v-1}=1}^{h_v} h_{v-1}^{-1/2}  \cdots \sum_{h_6=1}^{h_7} h_6^{-3/2}\sum_{h_5=1}^{h_6} h_5^{-3/2} \sum_{h_4=1}^{h_5} h_4^{-1/2} \sum_{h_3=1}^{h_4} h_3^{-3/2} \sum_{h_2=1}^{h_3} h_2^{-1/2} \sum_{h_1=1}^{h_2} h_1^{-1/2}.
	\]
	Once again estimating these sums by integrals we find that
	\begin{align*}
	 \sum_{h_5=1}^{h_6} h_5^{-3/2} \sum_{h_4=1}^{h_5} h_4^{-1/2} \sum_{h_3=1}^{h_4} h_3^{-3/2} \sum_{h_2=1}^{h_3} h_2^{-1/2} \sum_{h_1=1}^{h_2} h_1^{-1/2} &= O\left( \sum_{h_5=1}^{h_6} h_5^{-3/2} \sum_{h_4=1}^{h_5} h_4^{-1/2}\sum_{h_3=1}^{h_4} h_3^{-3/2} \sum_{h_2=1}^{h_3} 1 \right) \\
	&= O\left( \sum_{h_5=1}^{h_6} h_5^{-3/2} \sum_{h_4=1}^{h_5} h_4^{-1/2} \sum_{h_3=1}^{h_4} h_3^{-1/2}\right) \\
	&= O\left( \sum_{h_5=1}^{h_6} h_5^{-3/2}\sum_{h_4=1}^{h_5} 1\right) \\
	&= O\left( \sum_{h_5=1}^{h_6} h_5^{-1/2}\right)=O(h_6^{1/2}) = O(n^{1/2}).
	\end{align*}
	Furthermore, each of the remaining sums is at most $\sum_{x=1}^{2n} x^{-1/2} = O(n^{1/2})$, so the total sum is $O(n^2 \cdot (n^{1/2})^{v-5} \cdot n^{1/2}) = O(n^{v/2})$.
	
	All of the cases $b=0$ and $2\le a\le 5$ have essentially the same proof as one another, so we will only explicitly go through one of these cases, namely $a=3$.  In this case we take out the factors $(h_{v+2}+2v+1)h_3^{-3/2}h_{v+2}^{-3/2}=O(n^{-2})$ from \eqref{E-Bound}, and we use the fact that $h_i\ge h_3$ is linear for all $i\ge 3$ to conclude \eqref{E-Bound} is of the order of magnitude at most
	\begin{align*}
	n \cdot \sum_{h_{v+1} = 1}^{2n} n^{-1/2} \cdots \sum_{h_6=1}^{2n} n^{-1/2} \sum_{h_5=1}^{2n} n^{-3/2} \sum_{h_4=1}^{2n} n^{-1/2} \sum_{h_2=1}^{2n} h_2^{-1/2} \sum_{h_1=1}^{h_2} h_1^{-1/2}&=O\l(n^{v/2-1}\sum_{h_2=1}^{2n} h_2^{-1/2} \sum_{h_1=1}^{h_2} h_1^{-1/2}\r)\\ &=O(n^{v/2}).
	\end{align*}
	
	Now consider the case that $a>b\geq 5$, and again we can assume for simplicity that $a=v+1$. Then \eqref{E-Bound} is at most of the order of
	\begin{align*}
	n^2 \cdot \sum_{h_v = 1}^{2n} & h_v^{-1/2} \sum_{h_{v-1}=1}^{h_v} h_{v-1}^{-1/2}  \cdots \sum_{h_{b+1}=1}^{h_{b+2}} h_{b+1}^{-1/2} \cdot \sum_{h_b=1}^{16 v \log n} (h_b+2v+1) \cdots \\
	& \cdots\sum_{h_6=1}^{16v\log n} (h_6+2v+1) \sum_{h_5=1}^{16 v \log n}  \sum_{h_4=1}^{16 v \log n}(h_4+2v+1) \sum_{h_3=1}^{16 v \log n} \sum_{h_2=1}^{16 m \log n} (h_2+2v+1) \sum_{h_1=1}^{16 v \log n} (h_1+2v+1).
	\end{align*}
	Note that each of the rightmost $b$ sums will contribute at most $O((\log n)^2)$ each, and the remaining sums will contribute $O(n^{(v-b)/2})$ by an argument similar to the one above. Thus the total contribution will be of the order $O( n^2 \cdot n^{(v-b)/2} \cdot (\log n)^{2b}) = o(n^{v/2})$.
	
	Similar arguments give a bound of $o(n^{v/2})$ when $b \in \{1,2,3,4\}$ and for any $a>b$. Note that since $h_a$ is linear in $n$, we always have $b < a$ for $n$ large enough, so these finitely many cases are all that need to be checked for $v=5$.  The proofs for $v=3,4$ are essentially the same, and we note that we did not deal with these cases earlier because we could not write, for example, $h_6$.  We omit the details.
\end{proof}
We note that the above proof shows the somewhat stronger result that the only quadruples that contribute to the order of magnitude of $n^{v/2}$ are those which have all of their gap sizes at least $16v \log n$.  With this we can now prove Theorem~\ref{T-induced}.

\begin{proof}[Proof of Theorem~\ref{T-induced}]
	The statement for induced subgraphs follows from Proposition~\ref{P-inducedLower} and \ref{P-inducedUpper}.  For any $H$ we claim that
	\[
	N_H^*(CP_n)\le N_H(CP_n) \leq v! \cdot \sum_{H'} N^*_{H'}(CP_n),
	\]
	where the sum is over all Catalan-pair graphs $H'$ on $v$ vertices that contain $H$ as a subgraph. The lower bound is obvious.  For the upper bound, note that for any given subgraph of $CP_n$ isomorphic to $H$, the induced subgraph on these vertices is isomorphic to some $H'$ appearing in this sum, and for given $H'$ there are at most $v!$ subgraphs of $H'$ isomorphic to $H$.  Taking the expectation of both sides of this inequality and using the result for induced subgraphs gives the desired conclusion.
\end{proof}
	
	.
\subsection{The sizes of the connected components}\label{SSec-components}

Computational evidence suggest that a typical random Catalan-pair graph on $n$ vertices will have one large component with roughly $n/2$ vertices, and a lot of smaller components. As we proved in Section~\ref{sec-isolated}, many of these components will be isolated vertices, but a significant amount will have larger size. In fact, we show that for \emph{any} fixed Catalan-pair graph the number of connected components of $CP_n$ isomorphic to this graph is linear in $n$.
\begin{prop} \label{T-components}
Let $H$ be a connected Catalan-pair graph on $v$ vertices and let $n \geq v+2$. There exists a constant $C$, independent of $H$, such that the expected number of connected components of $CP_n$ isomorphic to $H$ is at least $C \cdot (n-v+1/2) \cdot 16^{-v}$.
\end{prop}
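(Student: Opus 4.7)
The plan is to place a fixed representative of $H$ on $2v$ consecutive points of $CP_n$ and show that doing so forces a connected component isomorphic to $H$, so that the expected number of such components is at least (number of valid positions) times (probability of the local configuration). Fix any representative $R$ of $H$ on $2v$ points, having say $s$ red arcs and $t$ blue arcs (so $s+t=v$); such a representative exists because $H$ is a Catalan-pair graph. For each $p\in\{0,1,\ldots,2n-2v-1\}$, let $E_p$ be the event that positions $p+1,\ldots,p+2v$ of $CP_n$ receive the colors prescribed by $R$ and that the red and blue Catalan-arc matchings realize the arcs of $R$ on those positions. The restriction $p\le 2n-2v-1$ places the range among the first $2n-1$ positions, whose colors are i.i.d.\ uniform, so that the parity-forced last point of $CP_n$ does not interfere.

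On $E_p$, every one of the $2v$ points of the range is an endpoint of an arc of $R$, so no other arc of $CP_n$ can have an endpoint in the range, and hence no external arc can interlace with any internal one. Since $H$ is connected, the $v$ arcs of $R$ at position $p$ form a connected component of $CP_n$ isomorphic to $H$. Because each position is an endpoint of a unique arc in $CP_n$, distinct $p$ give rise to distinct components (if the ranges overlap, $E_p$ and $E_{p'}$ are in fact mutually exclusive; if they are disjoint, the two components involve disjoint sets of arcs). Therefore the expected number of components of $CP_n$ isomorphic to $H$ is at least $\sum_p \Pr[E_p]$.

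It remains to lower-bound $\Pr[E_p]$. The coloring probability is exactly $2^{-2v}=4^{-v}$. Conditional on the coloring, let $2k$ be the total number of red points (so $k\ge s$) and $2(n-k)$ the number of blue points (so $n-k\ge t$), and apply Lemma~\ref{L-CatProb} to each monochromatic matching. Because every red point of the range is an endpoint of a specified red arc of $R$, the regions $M_i$ for $i\ge 1$ are empty while $M_0$ consists of the $2(k-s)$ red points outside the range, so the lemma gives probability exactly $C_{k-s}/C_k$ that the red matching realizes $R$'s red arcs, and symmetrically $C_{n-k-t}/C_{n-k}$ for the blue matching. The elementary identity $C_{m-1}/C_m=(m+1)/(2(2m-1))\ge 1/4$ for $m\ge 1$ iterates to $C_{k-s}/C_k\ge 4^{-s}$ and $C_{n-k-t}/C_{n-k}\ge 4^{-t}$, deterministically in $k$, so $\Pr[E_p]\ge 4^{-v}\cdot 4^{-v}=16^{-v}$. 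Summing over the $2n-2v$ allowed values of $p$ gives an expected count of at least $2(n-v)\cdot 16^{-v}\ge C\cdot(n-v+\tfrac12)\cdot 16^{-v}$ with $C=8/5$ for every $n\ge v+2$. I anticipate no real obstacle here; the argument is a direct application of Lemma~\ref{L-CatProb} together with the elementary Catalan-ratio bound.
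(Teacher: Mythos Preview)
Your proof is correct and follows the same overall strategy as the paper: drop a fixed representative of $H$ onto a window of $2v$ consecutive points, observe that this forces an entire connected component isomorphic to $H$, and invoke Lemma~\ref{L-CatProb} to lower-bound the probability of the local configuration.

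The one genuine difference is in how you bound the Catalan ratios. The paper uses the two-sided asymptotic estimate \eqref{eq-aA}, which is only valid for arguments $\ge 1$; this forces an extra conditioning step (with probability at least $1/2$ the outside points are not monochromatic, so that $r>s$ and $b>t$ strictly) and produces the inexplicit constant $C=(a/A)^2$. You instead use the elementary telescoping bound $C_{m-1}/C_m=(m+1)/(2(2m-1))\ge 1/4$, which is valid for every $m\ge1$ and hence needs no such conditioning; it also yields an explicit constant. Your handling of the parity-forced last point by restricting to $p\le 2n-2v-1$ is likewise a small clean-up over the paper's argument. So your route is the same in spirit but slightly tidier in execution.
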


\begin{proof}
Let $a$ and $A$ be as in \eqref{eq-aA} and take $C = \left(\frac{a}{A}\right)^2$. Assume that $H$ has bipartite components of sizes $s$ and $t$. We show that for any $1 \leq x \leq 2n-2v+1$, we have probability at least $1/2 \cdot (a/A)^2 \cdot 16^{-v}$ that there are $v$ arcs connecting $\{x,x+1,\ldots,x+2v-1\}$ and that the resulting Catalan-pair graph on these $2v$ points is isomorphic to $H$, which in particular yields a connected component of $CP_n$ isomorphic to $H$.

Consider a fixed representative for $H$. With probability $(1/2)^{2v}$ the points $x,x+1,\ldots,x+2v-1$ are colored in the exact same order as the points in the representative. Furthermore, since there are at least four other points, with probability at least $1/2$ the other points do not all have the same color. Therefore, we have $r > s$ and $b > t$ red and blue points in total. Given $r$ and $s$, the probability that we the arcs on the points $x, x+1, \ldots, x+2v-1$ exactly match those in the representative for $H$ is given by
\[
\frac12 \cdot \frac{C_{r-s}}{C_r} \cdot \frac{C_{b-t}}{C_b} \geq \frac12 \cdot \frac{a \cdot r^{3/2}}{4^s \cdot A \cdot (r-s)^{3/2}} \cdot \frac{a \cdot b^{3/2}}{4^t \cdot A \cdot (b-t)^{3/2}} \geq \frac12 \cdot \left(\frac{a}{A}\right)^2 \cdot \frac1{4^{s+t}}.
\]
Since $s+t = v$ this implies that with probability at least $4^{-v} \cdot \frac12 \cdot (a/A)^2 \cdot 4^{-v}$ we get such a connected component isomorphic to $H$ starting at point $x$. By linearity of expectation, the expected number of connected components isomorphic to $H$ is at least
\[
(2n-2v+1) \cdot 4^{-v} \cdot \frac12 \cdot \left(\frac{a}{A}\right)^2 \cdot 4^{-v} = (n-v+1/2) \cdot \left(\frac{a}{A}\right)^2 \cdot 16^{-v}. \qedhere
\]
\end{proof}

In particular, we expect a typical Catalan-pair graph on $n$ vertices to have connected components of size at least logarithmic in $n$.

\section{Computational Experiments} \label{sec-computational}
We consider some data from computer simulations of random Catalan-pair graphs. We do this both to provide visual evidence of some of the results we have proven, as well as to motivate further questions to be studied. In the first four graphs, each data point corresponds to averaging the given statistic over $100$ trials for $n = 100$, $200$, $\ldots$, $3000$ respectively.

The first graph shows the number edges of a random Catalan-pair graph divided by $n\log n$.  Since $\pi^{-1}\approx .318$, this data seems to suggest that the expected number of edges increases somewhat slowly to its asymptotic limit as proved in Theorem~\ref{T-edges}.

\includegraphics[scale=0.7]{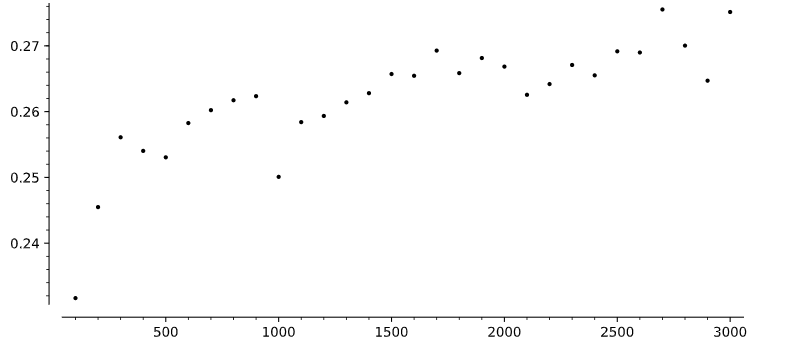}

The following graph shows the number of isolated vertices of a random Catalan-pair graph. The red plot corresponds to $0.3023 n$, in accordance with Theorem~\ref{T-isolated}.

\includegraphics[scale=0.7]{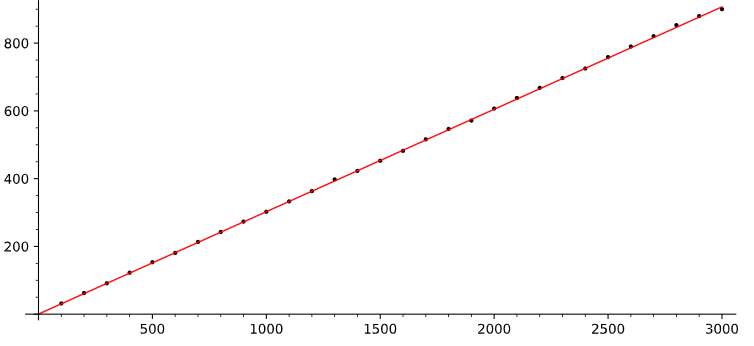}

The next two graphs show the sizes of the largest and second largest connected component respectively. The red plot corresponds to $0.55 n$.

\includegraphics[scale=0.7]{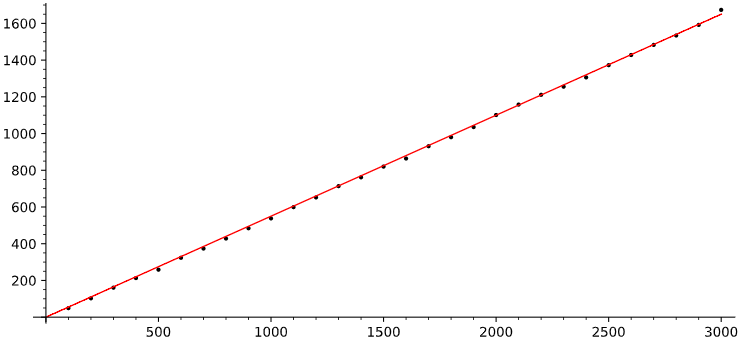}

The graph for the second largest component is still somewhat noisy, so we have not included a plot that tries to fit this data. Note that in Subsection~\ref{SSec-components} we suggest that the behavior should be at least logarithmic, but we likely require more data for larger $n$ to see if this is indeed the correct order of magnitude.

\includegraphics[scale=0.7]{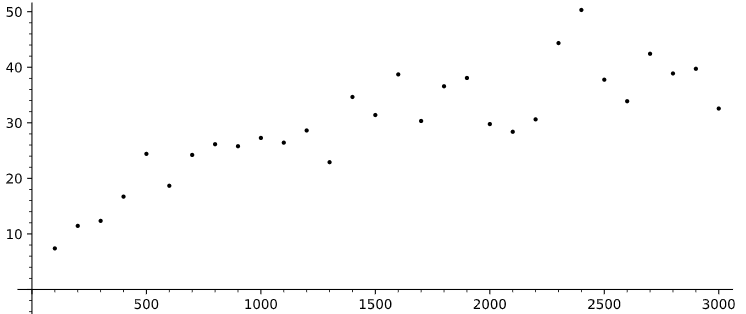}

We next look at four histograms of the distribution for the $100$ trials with $n = 3000$. We would like to point out that most of the histograms have their horizontal axis not starting at $0$.

First, we consider the total number of edges.  The binwidth for this plot is 60.

\includegraphics[scale=0.65]{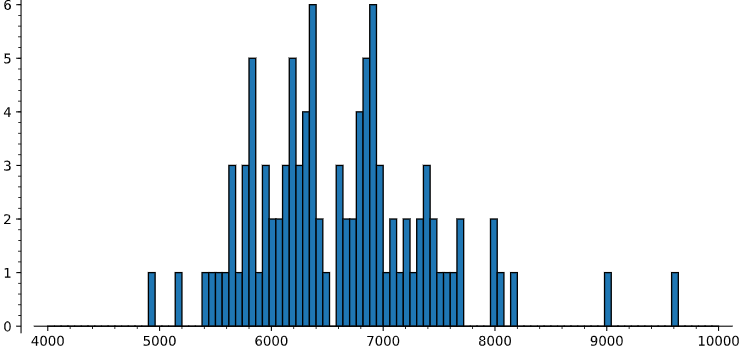}

Next we consider the sizes of the largest and second largest component, respectively.  We note that there are some outliers in the size of the second largest component in this data set, and this was also the case for several other data sets that we considered.  We have also observed noticeable outliers in the largest component in other data sets (on $n=1500$ vertices), though this could have been due to using too small a value of $n$.   The first plot has binwidth 19 and the second has binwidth 3.

\includegraphics[scale=0.65]{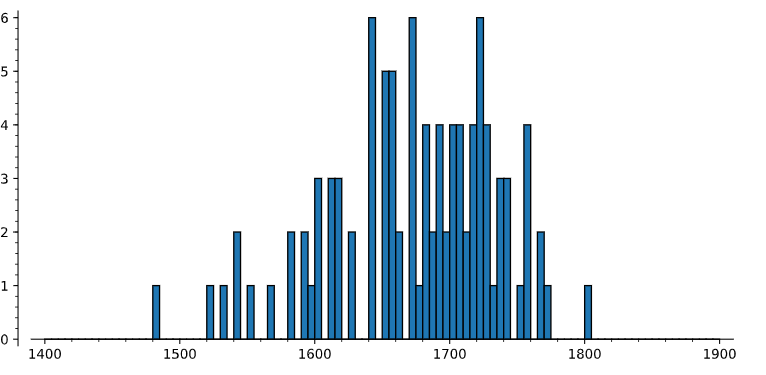}

\includegraphics[scale=0.65]{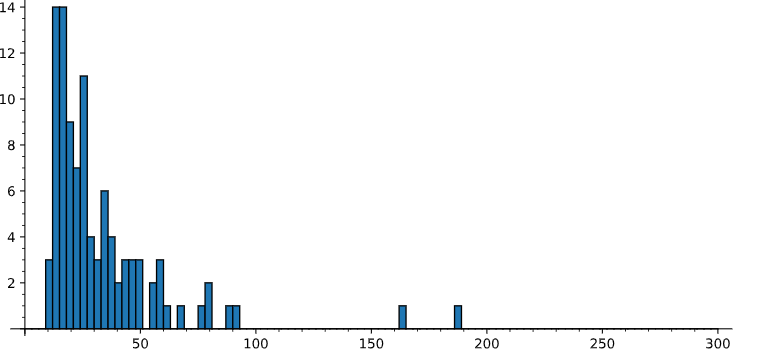}

We show the histogram for the total number of isolated vertices.  The binwidth for this plot is 2.

\includegraphics[scale=0.65]{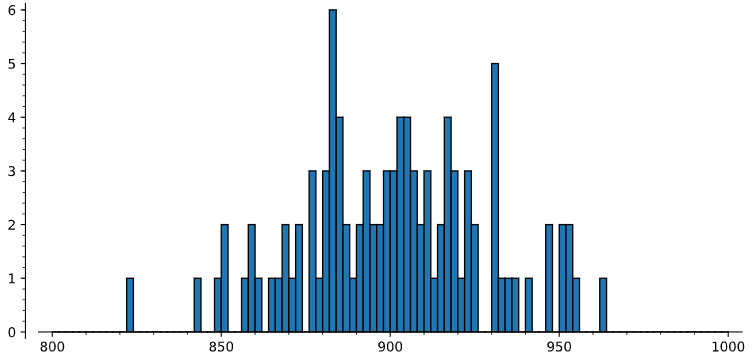}

We conclude this section with a look at the degree distribution of our 100 trials with $n=3000$.  The first plot shows the average number of vertices with a given degree that appeared during our 100 trials.

\includegraphics[scale=0.65]{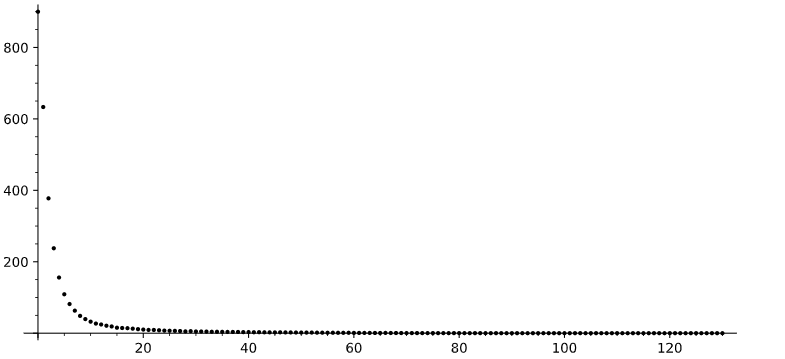}

Our final plot is a log-log plot of this data where our log is base $e$.

\includegraphics[scale=0.65]{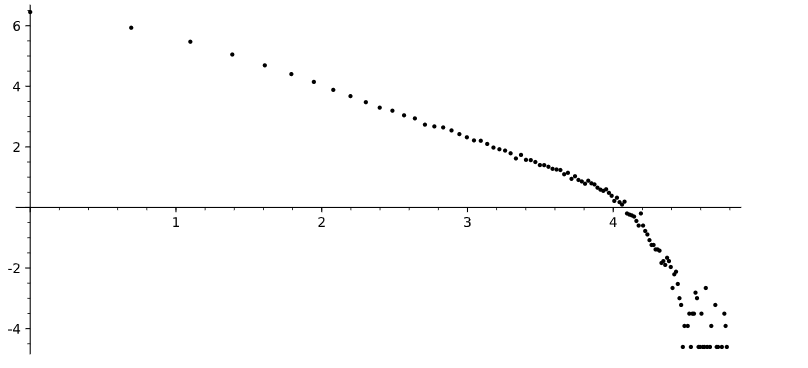}

We note that this plot appears mostly linear, which suggest that the degree distribution follows some power law distribution. However, the sharp turn at the end indicates that this behavior might only be valid for vertices of small enough degree.

\section{Conclusion and future problems} \label{sec-conclusion}
In this paper we introduced a model $CP_n$ for randomly generating Catalan-pair graphs, and we deduced various results concerning its subgraphs and connected components. There are many questions that remain to be explored. One such question is to investigate whether the lower bound in Proposition~\ref{P-inducedLower} holds for disconnected graphs as well.
\begin{prob}
    Determine the order of magnitude of $\E[N^*_H(CP_n)]$ when $H$ is a disconnected graph on at least $3$ vertices.
\end{prob}

In addition to the expectation, it would be of interest to determine (or at least bound) the second moments of random variables associated to $CP_n$. For example, it would be interesting to improve on Proposition~\ref{prop-isolated-variance} and Proposition~\ref{prop-edgevariance}.
\begin{prob}
    Determine more explicit bounds on the variance of the number of isolated vertices and the number of edges of $CP_n$.
\end{prob}
Such a result would be of interest as it would give an explicit bound on the concentration of these random variables around their mean by using the Chernoff bound. In order to improve on the concentration results in Theorem~\ref{T-edges} and Theorem~\ref{T-isolated} the following question would need to be answered as well.
\begin{prob}
    Determine explicit bounds on the quantities $|\E[e(CP_n) - \frac1{\pi} n \log n|$ and $|\E[I_n] - \gamma_n|$.
\end{prob}

While we have proven some results concerning the connected components of $CP_n$, there are many more questions that can be asked.  In particular we would like to know the following.
\begin{prob}
	What are the expected sizes of the largest components of $CP_n$?  Are the sizes of any of these components concentrated around their mean?
\end{prob}
Outliers in our computational evidence suggests that the second largest component might not have very strong concentration.  It is unclear whether or not this will be the case for the largest component.

The expected degree distribution of $CP_n$ remains unknown, though Theorem~\ref{T-components} does imply a lower bound for vertices of small degree.
\begin{prob}
	Describe the expected degree distribution of $CP_n$.  In particular, does it exhibit a power law distribution, possibly only for sufficiently small degrees?
\end{prob}

Lastly, we consider two additional models for randomly generating Catalan-Pair graphs which could be of interest.  These models are inspired by the random graph model $G(n,p)$, which is defined by including each possible edge of an $n$ vertex graph independently with probability $p$, as well as the model $\Gamma(n,m)$, defined by choosing uniformly at random a graph on $n$ vertices with exactly $m$ edges. For more details and result of these random models, see \cite{Gnp} for the model $G(n,p)$ and \cite{Gnm} for the model $\Gamma(n,m)$.

For $0\le p\le 1$, define $CP_n(p)$ the same way as we defined $CP_n$, but instead of coloring the first $2n-1$ colinear points red and blue with equal probability, we instead color each point red with probability $p$ and blue with probability $1-p$.  Essentially all our proofs carry over to $CP_n(p)$ when $p$ is a fixed constant, but it is not immediately clear how $CP_n(p)$ behaves when $p$ depends on $n$.

\begin{prob}
	What can be said about $CP_n(p)$ when $p$ depends on $n$?  Does $CP_n(p)$ exhibit evolutionary properties as $p$ grows?
\end{prob}

Another model to consider is $CP_n'(m)$, which is defined by coloring its $2n$ collinear points chosen uniformly from all colorings which have $2m$ red points, and then proceeding as in the definition of $CP_n$.  Intuitively, $CP_n'(m)$ and $CP_n(m/n)$ should behave in essentially the same way, at least when $m=\Theta(n)$.

In particular, we would like to be able to say that most results in $CP_n=CP_n(1/2)$ continue to hold in $CP'_n(n/2)$ and vice versa.  We believe that all of the proofs we have given in this paper can be modified without too much difficulty to work for $CP'_n(n/2)$ as well, though there will be technical difficulties.  For example, one should first prove that we have concentration results in $CP_n'(n/2)$ similar to those in $CP_n$.  A more subtle issue is that the probability that a given point is colored red or blue is not precisely $1/2$ in $CP'_n(n/2)$ once we have conditioned on other events occurring, so some care is needed to handle this, especially when dealing with asymptotic results.

Again, while we believe that on a case by case basis our results here carry over to $CP'_n(n/2)$, it would be nice if there was a more systematic way to accomplish this.  For example, we would like to say something analogous to the following statement relating $G(n,p)$ and $\Gamma(n,m)$, \cite[Thm. 7.6]{MGT}.
\begin{thm}
	Let $0<p=p(n)<1$ be such that $pn^2\to \infty$ and $(1-p)n^2\to \infty$, let $Q$ be a property of graphs, and let $\epsilon>0$ be fixed.

    If $(1-\epsilon){n\choose 2}<m<(1+\epsilon)p{N\choose 2}$ and asymptotically almost surely $\Gamma(n,m)$ has property $Q$, then asymptotically almost surely $G(n,p)$ has property $Q$.
\end{thm}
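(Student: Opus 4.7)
The plan is to exploit the fundamental relationship between the two models: conditional on $G(n,p)$ having exactly $m$ edges, every graph on $n$ vertices with $m$ edges is equally likely to be realized (by symmetry of the definition of $G(n,p)$), so the conditional distribution of $G(n,p)$ given $e(G(n,p))=m$ is precisely $\Gamma(n,m)$. Using this, I would decompose
\[
\Pr[G(n,p)\in Q] = \sum_{m=0}^{\binom{n}{2}} \Pr[e(G(n,p))=m]\cdot \Pr[\Gamma(n,m)\in Q],
\]
so the task reduces to showing that the mass in this sum is concentrated on values of $m$ for which $\Pr[\Gamma(n,m)\in Q]\to 1$.

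First I would establish concentration of the edge count. Since $e(G(n,p))$ is a sum of $\binom{n}{2}$ independent Bernoulli$(p)$ variables, it has mean $\mu=p\binom{n}{2}$ and variance $\sigma^2=p(1-p)\binom{n}{2}$. Chebyshev's inequality gives
\[
\Pr\bigl[\bigl|e(G(n,p))-\mu\bigr|>\epsilon\mu\bigr]\le \frac{1-p}{\epsilon^2 p\binom{n}{2}},
\]
which tends to $0$ under the hypothesis $pn^2\to\infty$ (the hypothesis $(1-p)n^2\to\infty$ plays the symmetric role, ensuring the non-edge count is similarly concentrated, which is useful if $Q$ is phrased naturally in terms of complements). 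Let $I_n$ denote the interval of integers $m$ with $(1-\epsilon)\mu<m<(1+\epsilon)\mu$; then $\Pr[e(G(n,p))\in I_n]=1-o(1)$.

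Next I would interpret the hypothesis ``asymptotically almost surely $\Gamma(n,m)$ has $Q$'' uniformly: namely, for every sequence $m=m(n)$ with $m(n)\in I_n$ for all $n$, we have $\Pr[\Gamma(n,m(n))\in Q]\to 1$. A short contradiction argument shows this is equivalent to the uniform statement $\inf_{m\in I_n}\Pr[\Gamma(n,m)\in Q]=1-o(1)$: if not, one could extract a ``bad'' sequence $m(n)\in I_n$ along which $\Pr[\Gamma(n,m(n))\in Q]$ fails to converge to $1$, contradicting the hypothesis. Denote this uniform lower bound by $1-\delta_n$ with $\delta_n\to 0$.

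Combining these ingredients via the decomposition above yields
\[
\Pr[G(n,p)\in Q]\ge \sum_{m\in I_n}\Pr[e(G(n,p))=m]\cdot \Pr[\Gamma(n,m)\in Q]\ge (1-\delta_n)\cdot \Pr[e(G(n,p))\in I_n],
\]
and the right-hand side tends to $1$, completing the proof. The main obstacle I anticipate is the uniformity step: formalizing what ``a.a.s.\ $\Gamma(n,m)\in Q$'' means across an interval of $m$, and justifying that the pointwise statement (for each sequence $m(n)$) upgrades to the uniform bound $\inf_{m\in I_n}\Pr[\Gamma(n,m)\in Q]\to 1$. Everything else is a standard concentration-plus-total-probability calculation.
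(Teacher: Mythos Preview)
The paper does not actually prove this theorem; it is quoted verbatim as \cite[Thm.~7.6]{MGT} in the concluding section merely to motivate an analogous open problem for the Catalan-pair models, so there is no ``paper's own proof'' to compare against. Your argument is the standard one found in Bollob\'as's text and is correct in outline: the key identity $\Pr[G(n,p)\in Q]=\sum_m \Pr[e(G(n,p))=m]\Pr[\Gamma(n,m)\in Q]$ together with Chebyshev concentration of the binomial edge count does the job, and you correctly flag the uniformity issue (passing from ``for every sequence $m(n)$ in the window'' to ``$\inf_{m\in I_n}$'') as the only point requiring care.
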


\begin{prob}
	Is there a systematic way to show that (reasonably nice) properties of $CP_n$ hold in $CP_n'(n/2)$ and vice versa?  More generally, can one show this for $CP_n(m/n)$ and $CP_n'(m)$ for various values of $m$?
\end{prob}

\section{Acknowledgments} \label{sec-acknowledgements}
The authors would like to thank Fan Chung Graham for suggesting this problem and for her guidance during the project.  We would also like to thank Joel Spencer for his helpful comments. The first author was partially supported by the Combinatorics Foundation.

\clearpage

\appendix
\section{Proofs of the edge variance}\label{appendix-edgevariance}
We will now provide the proofs of the lemmas in Section~\ref{sec-edgevariance}. First, we prove the lemma that concerns all pairs of edges coming from at most three arcs.

\begin{proof}[Proof of Lemma~\ref{L-variancefewarcs}.]
    Since it is clear that at least two arcs must be involved, there are two cases to consider. First, suppose that the total number of arcs involved equals two.  Then both edges in the pair are the same edge, so the number of such pairs equals $e(CP_n) \leq n^2$.
    On the other hand, if there are a total of three arcs involved, there are at most $n \cdot e(CP_n)$ pairs of such edges. Indeed, there are $e(CP_n)$ ways to choose the first edge in the pair, which yields two arcs, and then there are at most $n$ ways to choose a third arc that interlaces with either of the two arcs used already. Therefore, in expectation there are at most
    \[
    \E[n \cdot e(CP_n)] = \frac1{\pi} n^2 \log n = o(n^2 \log^2 n)
    \]
    such pairs.
\end{proof}

The next two lemmas are used to show that we may assume that each of the gap sizes is of order at least $\log n$. Before we give the proof let us define $e'(CP_n)$ as the number of edges in $CP_n$ at least one of whose arcs has size at most $d \log n$ or at least $c n$, which we will refer to as \emph{exceptional edges}. In Proposition~\ref{prop-edges} we showed that $\E[e'(CP_n)] = o(n \log n)$.

\begin{proof}[Proof of Lemma~\ref{L-smallnested}]
    First we consider the number of such pairs with $2n - k_1 < d \log n$. We claim that there are at most $(d \log n)^2 \cdot e(CP_n)$ such pairs. Indeed, we can pick the edge $(x_2,k_2,y_2,\ell_2)$ in at most $e(CP_n)$ ways, and the edge $(x_1,k_1,y_1,\ell_1)$ in at most $(d \log n)^2$ ways: we can pick $k_1$ in $d \log n$ ways, then since $k_1 > n$ there is at most one $x_1$ such that $x_1$ and $x_1 + k_1$ are connected and the vertex corresponding to this arc has degree at most $d \log n$ (as each interlacing arc must have an endpoint less than $x_1$ or larger than $x_1+k_1$). By taking expectations we see that we have at most $(d \log n)^2 \E[e(CP_n)] = o(n^2 \log^2 n)$ such pairs.

    Now suppose that $k_1 - k_2 < d \log n$ or $k_2 < d \log n$. First consider the pairs with $(x_1,k_1,y_1,\ell_1)$ an exceptional edge. We claim that the number of such pairs is at most $n \cdot d \log n \cdot e'(CP_n)$, from which taking expectations will suffice. In order to prove this, note that there are at most $e'(CP_n)$ ways to pick an exceptional edge. Then, in the case $k_1 - k_2 < d \log n$, there are at most $d \log n$ ways to pick $x_2$, and the corresponding arc has degree at most $n$. Similarly, if $k_2 <  d \log n$, there are at most $n$ ways to pick $x_2$, and the corresponding arc has degree at most $d \log n$.

    Therefore, we may assume that $(x_1,k_1,y_1,\ell_1)$ is not an exceptional edge. Assume that $k_1$ and $\ell_1$ are given. By the same logic as the proof of Proposition~\ref{prop-edgeasym}, we know that there are at most $4n \min\{k_1,\ell_1\}$ options for $x_1$ and $y_1$, and by Lemma~\ref{L-upperbound} the probability of having arcs connecting $(x_1,x_1+k_1)$ and $(y_1,y_1+\ell_1)$ is $O(k_1^{-3/2} \ell_1^{-3/2})$. Furthermore, given $(x_1,k_1,y_1,\ell_1)$ there are at most $k_1 \cdot d \log n$ possible second edges by a similar argument as above, where we now use $k_1$ instead of $n$ since we have fixed the size of the outer arc. Hence, the expected number of such pairs of edges is given by
    \[
    O\left( 2n \log n \cdot \sum_{k_1,\ell_1} \min\{k_1,\ell_1\} k_1^{-1/2} \ell_1^{-3/2} \right)
    \]
    so it suffices to show that $\sum_{k_1,\ell_1} \min\{k_1,\ell_1\} k_1^{-1/2} \ell_1^{-3/2} = o(n \log n)$. The contribution from $k_1 \leq \ell_1$ is at most
    \[
    \sum_{\ell_1 \leq cn} \ell_1^{-3/2} \sum_{k_1 \leq \ell_1} k_1^{1/2} \leq \sum_{\ell_1 \leq cn} \ell_1^{-3/2} O(\ell_1^{3/2}) = O(n),
    \]
    and the contribution from $\ell_1 \leq k_1$ is at most
    \[
    \sum_{k_1 \leq cn} k_1^{-1/2} \sum_{\ell_1 \leq k_1} \ell_1^{-1/2} = \sum_{k_1 \leq cn} k_1^{-1/2} O(k_1^{1/2}) = O(n)
    \]
    completing the proof.
\end{proof}

\begin{proof}[Proof of Lemma~\ref{L-smallunnested}]
    We first consider the case that one of $k_1$, $k_2$ is less than $d \log n$. By symmetry we can assume that $k_1 < d \log n$. As in the previous lemma, the number of pairs of edges with $(x_2,k_2,y_2,\ell_2)$ an exceptional edge is at most $n \cdot d\log n \cdot e'(CP_n)$ as there are at most $n \cdot d\log n$ edges where one vertex has degree at most $d \log n$ and there are at most $e'(CP_n)$ ways to pick the second edge. Therefore, in expectation, there are at most $O(n \log n) \cdot \E[e'(CP_n)] = o(n^2 \log^2 n)$ such pairs.

    Thus we may assume that $(x_2,k_2,y_2,\ell_2)$ is not an exceptional edge. Consider all pairs of edges where $k_1 < \sqrt{\log n}$. The number of such pairs is at most $n \cdot \sqrt{\log n} \cdot e(CP_n)$, as one can pick the arc $(x_1,x_1+k_1)$ in at most $n$ ways, this vertex has degree at most $\sqrt{\log n}$, and there are at most $e(CP_n)$ ways to pick the second edge. In particular, the expected number of such pairs is at most $n \cdot \sqrt{\log n} \cdot \E[e(CP_n)] = O(n^2 (\log n)^{3/2}) = o(n^2 \log^2 n)$.

    Lastly we handle the case where $\sqrt{\log n} \leq k_1 \leq d \log n$. We consider the expected number of pairs of an arc and an edge $((x_1,k_1), (x_2,k_2,y_2,\ell_2))$ such that $k_1$ is in the given range, and the arcs $(x_1,x_1+k_1)$ and $(x_2,x_2+k_2)$ are not nested. If we can show that the expected number of such pairs is $o(n^2 \log n)$ the result follows.  Indeed, any pair of edges of interest comes from such an arc-edge pair together with an arc that interlaces with $(x_1,k_1)$, and there are at most $O(\log n)$ such arcs.  Thus in total we will get at most $o(n^2 \log n) \cdot O(\log n) = o(n^2 \log^2 n)$ pairs of edges.

    To accomplish this, we will show that for any valid quadruple $q = ((x_1,x_2),(k_1,k_2),(y_2),(\ell_2))$ giving an arc-edge pair as described above, we have
    \begin{equation} \label{eq-unnestedsmall}
    \Pr[A(q)] = O\left(k_2^{-3/2} \ell_2^{-3/2} \cdot \left(k_1^{-3/2} + e^{-\sqrt{\log n}/16}\right)\right).
    \end{equation}
    Showing the above bound on the probability suffices because then the number of arc-edge pairs is at most
    \[
    \sum_q \Pr[A(q)] = O\left(\left(\sum_{x_1,k_1} k_1^{-3/2} + e^{-\sqrt{\log n}/16}\right)\cdot\left(\sum_{x_2,k_2,y_2,\ell_2} k_2^{-3/2} \ell_2^{-3/2} \right)\right)
    \]
    where we note that some combinations of some $(x_1,k_1)$ used in the first sum and some $(x_2,k_2,y_2,\ell_2)$ used in the second sum will not give a desired quadruple $q$, but this is no issue since we are only interested in an upper bound. By Lemma~\ref{L-Range} and Proposition~\ref{prop-edgeasym} the first sum is $o(n)$ and the second sum is $O(n \log n)$, showing the desired result.  We will deviate slightly and assume that $\ell_2$ is at least $2d \log n$, but we note that this change will not affect our previous arguments.

    To prove \eqref{eq-unnestedsmall} we note that $\Pr[A(q)]$ can be written as
    \begin{equation} \label{eq-unnestedsmall2}
    \Pr[A(q)] = 2^{-2n} \sum_c \frac{C_{n_0} C_{n_1} C_{n_2}}{C_{n_0+n_1+n_2+2}} \cdot \frac{C_{m_0} C_{m_2}}{C_{m_0+m_2+1}},
    \end{equation}
    where the sum is over all colorings $c$ of the points such that all the points coming from $q$ receive the correct color and the number of points of the desired color in each region is even. Here $n_0$ and $m_0$ are half the number of red and blue points outside of the desired arcs, $n_1$ is half the number of red points within arc $(x_1,x_1+k_1)$ and $n_2$ and $m_2$ are half the number of red an blue points respectively in the arcs $(x_2,x_2+k_2)$ and $(y_2,y_2+\ell_2)$. Note that $\ell_2 > 2d \log n$, hence the number of points between $y_2$ and $y_2+\ell_2$ that do not lie between $x_1$ and $x_1 + k_1$ is at least $d \log n$.

    Consider all the possible colorings of all the points except for the points in the interval $[x_1,x_1+k_1]$. By using Lemma~\ref{L-Chernoff}, for $d$ large enough, we can say that with probability at least $1 - O(n^{-10})$ we have $n_0, m_0 = \Omega(n)$, $n_2 = \Omega(k_2)$ and $m_2 = \Omega(\ell_2)$, where the bound on $m_2$ follows by the above remark that there are still at least $d \log n$ points that we are considering. Since
    \[
    n^{-10} = o\left(\left(k_1^{-3/2} + e^{-\sqrt{\log n}/16}\right)\cdot\left(k_2^{-3/2} \ell_2^{-3/2} \right)\right)
    \]
    we can restrict our attention to all colorings where the above bounds are satisfied. Now, for any such coloring, using the asymptotic formula for the Catalan numbers, we have
    \[
    \frac{C_{m_0} C_{m_2}}{C_{m_0+m_2+1}} = O(\ell_2^{-3/2}).
    \]
    Furthermore, we can rewrite
    \[
    \frac{C_{n_0} C_{n_1} C_{n_2}}{C_{n_0+n_1+n_2+2}} = \frac{C_{n_0+n_2+1} C_{n_1}}{C_{n_0+n_1+n_2+2}} \cdot \frac{C_{n_0} C_{n_2}}{C_{n_0+n_2+1}},
    \]
    then as in Lemma~\ref{L-Range} we can show that $\frac{C_{n_0+n_2+1} C_{n_1}}{C_{n_0+n_1+n_2+2}}$, which is the probability of having an arc connecting $x_1$ and $x_1+k_1$, is given by $O\left(k_1^{-3/2} + e^{-\sqrt{\log n}/16}\right)$, where this case is even a bit easier since we already specified the number of red points outside the arc. Furthermore, plugging in $n_0 = \Omega(n)$ and $n_2 = \Omega(k_2)$ we find $\frac{C_{n_0} C_{n_2}}{C_{n_0+n_2+1}} = O(k_2^{-3/2})$, and plugging all these results into \eqref{eq-unnestedsmall2} yields
    \[
    \Pr[A(q)] = 2^{-2n} \sum_c O\left(\left(k_1^{-3/2} + e^{-\sqrt{\log n}/16}\right)\cdot\left( k_2^{-3/2} \ell_2^{-3/2} \right)\right),
    \]
    which is $O\left(\left(k_1^{-3/2} + e^{-\sqrt{\log n}/16}\right)\cdot\left(k_2^{-3/2} \ell_2^{-3/2} \right)\right)$ since there are at most $2^{2n}$ valid colorings $c$.  The finishes the case that one of $k_1,k_2$ is less than $d\log n$.

    Secondly, consider the case that $2n - (k_1+k_2) < d \log n$. By the above we may assume that $k_1, k_2 > d \log n$. For any $d \log n < k_1 < 2n - d \log n$ there are at most $O(\log n)$ values of $k_2$ for which $2n - (k_1+k_2)$ is satisfied. Furthermore, given $k_1$ and $k_2$ there are at most $O((\log n)^2)$ ways to pick $x_1$ and $x_2$, as there are at most $d \log n$ dots outside of the arcs $(x_1,x_1+k_1)$ and $(x_2,x_2+k_2)$. A variant of the proof of Lemma~\ref{L-upperbound} shows that with probability $O(n^{3/2} k_1^{-3/2} k_2^{-3/2})$ we have arcs connecting $x_1$ and $x_1+k_1$, and $x_2$ and $x_2+k_2$.

    Given $k_1$, $k_2$, $x_1$ and $x_2$, and assuming that $(x_1,x_1+k_1)$ and $(x_2,x_2+k_2)$ match there are at most $k_1 \cdot k_2$ edges involving these two arcs. Therefore, the expected number of pairs of edges is at most
    \[
    \sum_{k_1,k_2} O((\log n)^2) \cdot O(n^{3/2} k_1^{-3/2} k_2^{-3/2}) \cdot k_1 k_2 = O(n^{3/2} (\log n)^2) \sum_{k_1,k_2} k_1^{-1/2} k_2^{-1/2}.
    \]
    We now claim that $k_2 \geq \frac12 (2n - k_1)$. Indeed, if $k_1 \geq 2n - 2d \log n$ we have $\frac12 (2n-k_1) \leq d \log n$, whereas $k_2 \geq d \log n$. Otherwise, we have $k_2 \geq 2n - k_1 - d \log n \geq \frac12 (2n - k_1)$ since the last inequality is equivalent to $k_1 \leq 2n - 2d \log n$. Using this, together with the earlier observation that there are at most $O(\log n)$ choices for $k_2$ given $k_1$, we find
    \begin{align*}
    O(n^{3/2} (\log n)^2) \sum_{k_1,k_2} k_1^{-1/2} k_2^{-1/2}
        &= O(n^{3/2} (\log n)^2) \sum_{k_1,k_2} k_1^{-1/2} (2n - k_1)^{-1/2} \\
        &= O(n^{3/2} (\log n)^3) \sum_{k_1} k_1^{-1/2} (2n-k_1)^{-1/2}.
    \end{align*}
    Using that $x \mapsto (x(2n-x))^{-1/2}$ is decreasing on $(0,n)$ and increasing on $(n,2n)$ we can compare the last sum with an integral to find that
    \begin{align*}
    \sum_{k_1} k_1^{-1/2} (2n-k_1)^{-1/2} &\leq \int_1^{2n-1} (x(2n-x))^{-1/2} \ \mathrm{d}x = \left. 2 \arctan\left(\sqrt{\frac{x}{2n-x}}\right) \right|_1^{2n-1} \\
    &= 2 \arctan(\sqrt{2n-1}) - 2 \arctan\left(\sqrt{\frac1{2n-1}}\right) \leq \pi.
    \end{align*}
    Therefore, the expected number of pairs of these edges is at most $O(n^{3/2} (\log n)^3) = o(n^2 \log^2 n)$.
\end{proof}

The next lemma takes care of the cases where the arcs on at least one side are nested.

\begin{proof}[Proof of Lemma~\ref{L-variancenested}]
    There are three cases to consider, based on the relative position of the arcs coming from the bottom:
    \begin{itemize}
        \item[1.] These arcs are unnested.
        \item[2.] We have $y_2 < y_1 < y_1+\ell_1 < y_2+\ell_2$.
        \item[3.] We have $y_1 < y_2 < y_2 + \ell_2 < y_1 + \ell_1$.
    \end{itemize}

    We will prove that in each case we have
    \begin{equation} \label{eq-nested}
    \Pr[A(\b{x},\b{k},\b{y},\b{l})] = O\left(n^3 k_1^{-3/2} (2n-k_1)^{-3/2} \ell_1^{-3/2} (2n-\ell_1)^{-3/2} k_m^{-3/2} \ell_m^{-3/2}\right),
    \end{equation}
    where $k_m = \min\{k_2-k_1,k_2\}$ and $\ell_m$ is defined based on which of the three cases we are working in. Furthermore, in all cases we will show an upper bound of $O(g(k_1,\ell_1) \cdot k_1 \cdot \min\{k_m,\ell_m\})$ on the number of choices for $x_1,x_2,y_1,y_2$ given $k_1,k_2,\ell_1,\ell_2$. Here $g(k,\ell)$ is the number of pairs $(x,y)$ such that $(x,k,y,\ell)$ is a good quadruple, as defined in Proposition~\ref{prop-edgeasym} . We note that given $k_m$ and $k_1$ there are only two possibilities for $k_2$ and we will define $\ell_m$ in such a way that the same thing holds for $\ell_2$ given $\ell_m$ and $\ell_1$. Therefore, the desired contribution will be of the order
    \[
    \sum_{k_1,\ell_1,k_m,\ell_m} g(k_1,\ell_1) \cdot k_1 \cdot \min\{k_m,\ell_m\} \cdot n^3 k_1^{-3/2} (2n-k_1)^{-3/2} \cdot \ell_1^{-3/2} (2n-\ell_1)^{-3/2} k_m^{-3/2} \ell_m^{-3/2}.
    \]
    Simply allowing all the variables in this sum to run between $d \log n$ and $2n - d \log n$ we can factor this as
    \[
    \left(\sum_{k_m,\ell_m} \min\{k_m,\ell_m\} k_m^{-3/2} \ell_m^{-3/2}\right)\cdot \left(\sum_{k_1,\ell_1} g(k_1,\ell-1) \cdot k_1 \cdot n^3 k_1^{-3/2} (2n-k_1)^{-3/2} \ell_1^{-3/2} (2n-\ell_1)^{-3/2}\right).
    \]
    Note that the first sum is of order $O(\log n)$. Now, if $\max\{k_1,\ell_1\} > cn$ we can use the estimate $k_1 = O(n)$, to show that the total contribution is given by
    \[
    O(n \log n) \cdot \left(\sum_{k_1,\ell_1} g(k_1,\ell-1)  \cdot n^3 k_1^{-3/2} (2n-k_1)^{-3/2} \ell_1^{-3/2} (2n-\ell_1)^{-3/2}\right) = o(n^2 \log^2 n),
    \]
    as the last sum is of order $o(n \log n)$ by Proposition~\ref{prop-edges}. Else, we can use $n^3(2n-k_1)^{-3/2}(2n-\ell_1)^{-3/2} = O(1)$ and the estimate $g(k_1,\ell_1) \leq 4n \min\{k_1,\ell_1\} \leq 4n \ell_1$ to see that the total contribution is of the order
    \[
    O(n \log n) \cdot \left(\sum_{k_1,\ell_1} k_1^{-1/2} \ell_1^{-1/2}\right) = O(n^2 \log n) = o(n^2 \log^2 n),
    \]
    where we used that
    \[
    \sum_{k_1,\ell_1} k_1^{-1/2} \ell_1^{-1/2} = \left(\sum_{d \log n \leq k_1 \leq cn} k_1^{-1/2}\right) \cdot \left(\sum_{d \log n \leq k_1 \leq cn} k_1^{-1/2}\right) = O(\sqrt{n}) \cdot O(\sqrt{n}).
    \]

    We now show \eqref{eq-nested} and the desired bounds on the number of quadruples for each of the cases. We handle the first case in full detail, the other two cases are very similar so we only highlight the details.
    \begin{itemize}
        \item[1.] We know from Lemma~\ref{L-upperbound} that
            \[
            \Pr[A(\b{x},\b{k},\b{y},\b{l})] = O\left(n^3 (2n-k_1)^{-3/2} (k_1-k_2)^{-3/2} k_2^{-3/2} (2n-\ell_1-\ell_2)^{-3/2} \ell_1^{-3/2} \ell_2^{-3/2}\right).
            \]
            In this case, we define $\ell_m = \min\{2n-\ell_1-\ell_2,\ell_2\}$. Now, since $(k_1-k_2)+k_2 = k_1$ we have $\max\{k_1-k_2,k_2\} \geq k_1/2$, so $(k_1-k_2)^{-3/2} k_2^{-3/2} = O(k_1^{-3/2} k_m^{-3/2})$ and similarly we find $(2n-\ell_1-\ell_2)^{-3/2} \ell_2^{-3/2} = O((2n-\ell_1)^{-3/2} \ell_m^{-3/2})$.

            Furthermore, given $k_1,k_2,\ell_1,\ell_2$ there are at most $g(k_1,\ell_1) + O(n) = O(g(k_1,\ell_1))$ ways to pick $(x_1,y_1)$, where we have to add $O(n)$ to account for the option that $(x_1,k_1,y_1,\ell_1)$ is not a good quadruple. Now suppose that $(x_1,y_1)$ has been chosen.

            If $k_m < \ell_m$ there are at most $(k_1-k_2)$ ways to pick $x_2$ and after that at most $2k_2$ ways to pick $y_2$, so there are at most $O((k_1-k_2) k_2) = O(k_1 k_m)$ ways to pick $(x_2,y_2)$ (where we used $k_1-k_2, k_2 \leq k_1$).

            Similarly, if $\ell_m < k_m$ there are at most $k_1$ ways to pick $x_2$ and we claim that there are at most $O(\ell_m)$ ways to pick $y_2$. Indeed, if $\ell_m = 2n - \ell_1 - \ell_2$ then there are at most two ways to pick the relative order of the arcs, after which $y_1$ is determined by how many of the $2n - \ell_1 - \ell_2 = \ell_m$ outside points are to the left of $y_2$, whereas if $\ell_m = \ell_2$ the value of $y_2$ is determined by the relative order of $x_2$ and $y_2$ and by how many points the arcs $(x_2,x_2+k_2)$ and $(y_2,y_2+\ell_2)$ have in common. For the first option we have two choices and for the last one we have $\ell_2 =\ell_m$ choices.
        \item[2.] In this case we have
            \[
            \Pr[A(\b{x},\b{k},\b{y},\b{l})] = O\left(n^3 (2n-k_1)^{-3/2} (k_1-k_2)^{-3/2} k_2^{-3/2} (2n-\ell_2)^{-3/2} \ell_1^{-3/2} (\ell_2-\ell_1)^{-3/2}\right),
            \]
            so defining $\ell_m = \min\{2n-\ell_2,\ell_2-\ell_1\}$ gives the desired bound on the probability.

            For the count of the number of options for $(x_1,y_1,x_2,y_2)$ the only thing that changes is the number of ways to pick $(x_2,y_2)$ given $(x_1,y_1)$ and given $\ell_m \leq k_m$. Again, there are at most $k_1$ ways to pick $x_2$. If $\ell_m = \ell_2-\ell_1$ then $y_2$ is determined by the number of dots between $y_1$ and $y_2$, whereas if $\ell_m = 2n - \ell_2$ the value of $y_2$ is determined by choosing how many of the outside points should be to the left of $y_2$.
        \item[3.] Here we have the bound
            \[
            \Pr[A(\b{x},\b{k},\b{y},\b{l})] = O\left(n^3 (2n-k_1)^{-3/2} (k_1-k_2)^{-3/2} k_2^{-3/2} (2n-\ell_1)^{-3/2} \ell_2^{-3/2} (\ell_1-\ell_2)^{-3/2}\right),
            \]
            so we define $\ell_m = \min\{\ell_1-\ell_2,\ell_2\}$.

            Again, the only thing that remains is to bound the number of ways to pick $y_2$ given $(x_1,y_1,x_2)$ in the case $\ell_m \leq k_m$. If $\ell_m = \ell_1-\ell_2$ then $y_2$ is determined by picking the distance between $y_1$ and $y_2$, whereas if $\ell_m = \ell_2$ the value of $y_2$ is determined by picking the relative order of $x_2$ and $y_2$ and choosing the number of points that the two arcs $(x_2,x_2+k_2)$ and $(y_2,y_2+\ell_2)$ have in common. \qedhere
    \end{itemize}
\end{proof}

Next we handle the case where at least one of the arcs has size linear in $n$.

\begin{proof}[Proof of Lemma~\ref{L-variancelong}]
    Without loss of generality we assume that $k_1 = \max\{k_1,k_2,\ell_1,\ell_2\}$. Let $k_0 = 2n-k_1-k_2$ and $\ell_0 = 2n-\ell_1-\ell_2$ and set $m_i = \min\{k_i,\ell_i\}$ for $i = 0,1,2$. First assume that $\ell_1 \neq \max\{ \ell_0, \ell_1, \ell_2\}$.

    We claim that given $k_1,k_2,\ell_1,\ell_2$, the number of quadruples is at most $O(m_0^2 m_1 m_2) = O(m_0^2 \ell_1 m_2)$. Since there are only finitely many options for the orderings of the endpoints of the arcs, it suffices to show the bounds for each specific ordering. But, given the ordering of the arcs, we claim that there are at most $m_0m_i$ ways to pick $(x_i,y_i)$. Indeed, consider the case that $k_0 = \min\{k_0,\ell_0\}$. Then we can pick $x_i$ in at most $k_0$ ways, as it is determined by the number of points to the left of $x_i$ (if the arc $(x_i,x_i+k_i)$ is the leftmost arc) or to the number of points to the right of $x_i+k_i$ (if the arc is the rightmost arc), so $x_i$ can be picked in at most $k_0 = m_0$ ways. After that, $y_i$ is determined by the number of points that the arcs $(x_i,x_i+k_i)$ and $(y_i,y_i+\ell_i)$ have in common and this is at most $m_i$. The case $\ell_0 = \min\{k_0,\ell_0\}$ is similar.

    Now given a quadruple, by Lemma~\ref{L-upperbound} the probability that that all the desired arcs match is $O(n^{3/2} k_0^{-3/2} \ell_0^{-3/2} \ell_1^{-3/2} k_2^{-3/2} \ell_2^{-3/2})$ where we used that $k_1 \geq cn$. Therefore, the desired contribution is at most
    \begin{equation} \label{eq-unnestedbig}
    O(n^{3/2}) \cdot \sum m_0^2 \ell_0^{-3/2} k_0^{-3/2} \cdot \ell_1^{-1/2} \cdot m_2 k_2^{-3/2} \ell_2^{-3/2}.
    \end{equation}
    Since $\ell_0 + \ell_1 + \ell_2 = 2n$ we have $\max\{\ell_0,\ell_1,\ell_2\} \geq 2n/3$. Since we assumed that $\ell_1$ is not the maximum we have two cases.

    \begin{itemize}
        \item $\ell_0$ is the maximum. In this case $n^{3/2} \ell_0^{-3/2} = O(1)$. Note that (as we did in Proposition~\ref{P-inducedUpper}) $\ell_0$ is determined by $\ell_1$ and $\ell_2$, and $k_1$ is determined by $k_0$ and $k_2$, so its contribution to \eqref{eq-unnestedbig} is
        \[
        O(1) \cdot \sum_{k_0,k_2,\ell_1,\ell_2} m_0^2 k_0^{-3/2} \ell_1^{-1/2} m_2 k_2^{-3/2} \ell_2^{-3/2},
        \]
        where the sum is over some appropriate range. To find an upper bound we can split this sum as
        \[
        O(1) \cdot \left(\sum_{k_0} m_0^2 k_0^{-3/2}\right) \cdot \left(\sum_{\ell_1} \ell_1^{-1/2} \right) \cdot \left(\sum_{k_2,\ell_2} m_2 k_2^{-3/2} \ell_2^{-3/2}\right),
        \]
        which after merging back involves more terms than before, but that is fine as we are only interested in an upper bound. We will now estimate each individual sum. For the first one, if $k_0 \leq \ell_0$ this contributes $\sum_{k_0} k_0^{1/2} = O(n^{3/2})$, whereas if $k_0 \geq \ell_0$ this sum is at most $O(n^2) \sum k_0^{-3/2} = O(n^2) \cdot O(n^{-1/2}) = O(n^{3/2})$ where we used that $k_0 \geq 2n/3$ in this case. For the second sum we get a bound of $O(n^{1/2})$. For the last sum we may assume $k_2 \leq \ell_2$ by symmetry and see that this sum is
        \[
        O\left(\sum_{\ell_2} \ell_2^{-3/2} \sum_{k_2 \leq \ell_2} k_2^{-1/2}\right) = O\left(\sum_{\ell_2} \ell_2^{-1}\right) = O(\log n),
        \]
        so all together we get $O(n^2 \log n)$ in this case.
        \item Now assume that $\ell_2 = \max\{\ell_0,\ell_1,\ell_2\}$. Using the estimate $O(n^{3/2}) \cdot \ell_2^{-3/2} = O(1)$ the contribution to \eqref{eq-unnestedbig} is at most
        \[
        O\left(\left(\sum_{k_0,\ell_0} m_0^2 k_0^{-3/2} \ell_0^{-3/2}\right) \cdot \left(\sum_{\ell_1} \ell_1^{-1/2}\right) \cdot \left(\sum_{k_2} m_2 k_2^{-3/2} \right)\right).
        \]
        Similar arguments to above give that the first sum is $O(n)$, the second one is $O(n^{1/2})$ and the last one is $O(n^{1/2})$ where here one has to distinguish cases based on whether $k_2 \geq \ell_2$ or $k_2 \leq \ell_2$ just as for the first sum in the case above, so the total contribution will be $O(n^2) = o(n^2 \log^2 n)$, as desired.
    \end{itemize}
It remains to handle the case $\ell_1 = \max\{\ell_0,\ell_1,\ell_2\}$. In this setting, we claim that (after being given an ordering of the endpoints of the arcs) we can choose $x_1,x_2,y_1$ and $y_2$ in $k_0 \cdot \ell_0 \cdot m_0 \cdot m_2$ ways. Indeed, we can still pick $x_2,y_2$ in $m_0 \cdot m_2$ ways, whereas we have at most $k_0$ ways to pick $x_1$ and $\ell_0$ ways to pick $y_1$. In this case, we get a contribution of at most

\[
    O(n^{3/2}) \cdot \sum m_0 \ell_0^{-1/2} k_0^{-1/2} \cdot \ell_1^{-3/2} \cdot m_2 k_2^{-3/2} \ell_2^{-3/2}.
\]
Using $O(n^{3/2}) \cdot \ell_1^{-3/2} = O(1)$ we have to evaluate
\[
\left(\sum_{k_0,\ell_0} m_0 \ell_0^{-1/2} k_0^{-1/2} \right) \cdot \left(\sum_{k_2,\ell_2} m_2 k_2^{-3/2} \ell_2^{-3/2}\right),
\]
where the second sum is $O(\log n)$ as before and by a similar argument we find that the first sum is $O(n^2)$, showing that this contribution is $O(n^2 \log n) = o(n^2 \log^2 n)$.
\end{proof}

Lastly, we handle all quadruples that are valid but not good.
\begin{proof}[Proof of Lemma~\ref{L-variancenotgood}]
By Lemma~\ref{L-upperbound} we know that $\Pr[A(\b{x},\b{k},\b{y},\b{l})] = O(k_1^{-3/2} k_2^{-3/2} \ell_1^{-3/2} \ell_2^{-3/2})$. Also, we know by Proposition~\ref{prop-edgeasym} that $\sum_{k_i,\ell_i} g(k_i,\ell_i) k_i^{-3/2} \ell_i^{-3/2} = O(n \log n)$ and by Proposition~\ref{prop-edges} that $\sum_{k_i,\ell_i} n k_i^{-3/2} \ell_i^{-3/2} = o(n \log n)$.

Our goal is to show that given $(k_1,k_2,\ell_1,\ell_2)$ there are at most $O(g(k_1,\ell_1) n + n g(k_2,\ell_2) + n^2)$ quadruples $q \in Q_4$, since then the desired contribution is at most
\[
O\left(\sum_{k_1,\ell_1,k_2,\ell_2} (g(k_1,\ell_1) n + n g(k_2,\ell_2) + n^2) k_1^{-3/2} \ell_1^{-3/2} k_2^{-3/2} \ell_2^{-3/2} \right),
\]
which is the sum of
\begin{align*}
    O\left(\left(\sum_{k_1,\ell_1} g(k_1,\ell_1) k_1^{-3/2} \ell_1^{-3/2}\right)\cdot\left(\sum_{k_2,\ell_2}n k_2^{-3/2} \ell_2^{-3/2}\right)\right) &= O(n \log n) \cdot o(n \log n) = o(n^2 \log^2 n)\\
    O\left(\left(\sum_{k_1,\ell_1}n k_1^{-3/2} \ell_1^{-3/2}\right)\cdot\left(\sum_{k_2,\ell_2}g(k_2,\ell_2) k_2^{-3/2} \ell_2^{-3/2}\right)\right) &= o(n \log n) \cdot O(n \log n) = o(n^2 \log^2 n)\\
    O\left(\left(\sum_{k_1,\ell_1}n k_1^{-3/2} \ell_1^{-3/2}\right)\cdot\left(\sum_{k_2,\ell_2}n k_2^{-3/2} \ell_2^{-3/2}\right)\right) &= o(n \log n) \cdot o(n \log n) = o(n^2 \log^2 n)
\end{align*}
so the total contribution is $o(n^2 \log^2 n)$ as well.

Now, given $(k_1,k_2,\ell_1,\ell_2)$ there are only a few ways in which we can have a valid but not good quadruple.
    \begin{itemize}
        \item $(x_1,k_1,y_1,\ell_1)$ is good, but $(x_2,k_2,y_2,\ell_2)$ is not good. In this case we can pick $(x_1,y_1)$ in at most $g(k_1,\ell_1)$ ways and $(x_2,y_2)$ in $O(n)$ ways, so we are done.
        \item $(x_2,k_2,y_2,\ell_2)$ is good, but $(x_1,k_1,y_1,\ell_1)$ is not good. Similarly to the previous case this will give a bound of $O( n g(k_2,\ell_2))$.
        \item Neither of the $(x_i,k_i,y_i,\ell_i)$ are good. In this case we get a bound of $O(n^2)$ as there are $O(n)$ ways to pick any individual $(x_i,k_i,y_i,\ell_i)$.
        \item Both of the $(x_i,k_i,y_i,\ell_i)$ are good, but the endpoint of one arc of the first four-tuple is adjacent to the endpoint of an arc of the second four-tuple. Note that there are only finitely many possible orderings of the endpoints of the arcs. Given an ordering, there are now at most $g(k_1,\ell_1)$ ways to pick $(x_1,y_1)$, which determines either $x_2$ or $y_2$ since one of $\{x_2,x_2+k_2,y_2,y_2+\ell_2\}$ is adjacent to a now known point, and after that there are at most $2n$ ways to pick the other of $x_2,y_2$, so there are $O(g(k_1,\ell_1) \cdot n)$ possible quadruples in this case. \qedhere
    \end{itemize}
\end{proof}

\end{document}